\theoremstyle{plain}
\newtheorem{thm}{Theorem}[section]
\newtheorem{prop}[thm]{Proposition}
\newtheorem{cor}[thm]{Corollary}
\newtheorem{lemma}[thm]{Lemma}
\theoremstyle{definition}
\newtheorem{algorithm}[thm]{Algorithm}
\newtheorem{example}[thm]{Example}
\newtheorem{rem}[thm]{Remark}
\newtheorem{defin}[thm]{Definition}
\newcommand{\M}{\mathcal{M}}
\newcommand{\N}{\mathcal{N}}
\newcommand{\T}{\mathcal{T}}
\newcommand{\V}{\mathcal{V}}
\newcommand{\QQ}{\mathbb{Q}}
\newcommand{\CC}{\mathbb{C}}
\newcommand{\PP}{\mathbb{P}}
\newcommand{\ZZ}{\mathbb{Z}}
\newcommand{\RR}{\mathbb{R}}
\newcommand{\supp}{\operatorname{supp}}
\newcommand{\divv}{\operatorname{div}}
\newcommand{\rk}{\operatorname{rk}}
\newcommand{\D}{\mathcal{D}}
\newcommand{\Hom}{\operatorname{Hom}}
\newcommand{\cone}{\operatorname{cone}}
\newcommand{\GL}{\mathrm{GL}}
\newcommand{\SL}{\mathrm{SL}}
\newcommand{\Sp}{\mathrm{Sp}}
\begin{document}

\title{Toricness and Smoothness Criteria for Spherical Varieties}

\author[G.~Gagliardi]{Giuliano Gagliardi}
\address[G.~Gagliardi]{d-fine AG\\Brandschenkestrasse 150\\8002 Zurich\\Switzerland}
\email{}
\author[J.~Hofscheier]{Johannes Hofscheier}
\author[H.~Pearson]{Heath Pearson}
\address[J.~Hofscheier and H.~Pearson]{School of Mathematical Sciences\\University of Nottingham\\ Nottingham\\NG7 2RD\\UK}
\email{\{johannes.hofscheier, heath.pearson\}@nottingham.ac.uk}

\subjclass{Primary: 14M27; Secondary: 14M25, 20G05}
\keywords{Combinatorial smoothness criterion, Spherical skeleton, Spherical variety, Toric variety}

\begin{abstract}
	We prove equivalent numerical conditions for a complete spherical variety to admit a toric structure, and for the smoothness of an arbitrary spherical variety along any given \(G\)-orbit. The conditions are in terms of spherical skeletons, a coarse ``subset'' of the Luna-Vust data of a spherical variety.
    Our smoothness criterion improves upon classical criteria by removing the dependency on external reference tables.
\end{abstract}

\maketitle{}

\section{Introduction}\noindent\phantomsection\label{sec:intro}In this paper we work over the field of complex numbers.

A \emph{spherical variety} is a normal irreducible variety \(X\) admitting the algebraic action of a connected reductive group \(G\), such that a Borel subgroup \(B\subseteq G\) has an open orbit in \(X\). Spherical varieties naturally extend the well-known classes of toric, and generalised flag varieties, which are combinatorially characterised up to equivariant isomorphism by lattice fans, and marked Dynkin diagrams, respectively. The full class of spherical varieties exhibit rich geometrical properties, and admit a hybrid description consisting of both fans and representation theoretical combinatorics.

The results of this paper are equivalent conditions of two geometrical properties in terms of \emph{spherical skeletons}—a simpler coarsening of the full spherical combinatorial data which determines the un-graded Cox ring of a spherical variety, analogously to the root datum of a reductive group encoding the core of its geometry, yet not capturing its isomorphism class. We show equivalent combinatorial conditions for:

\begin{enumerate}
    \item a complete spherical variety to admit a toric structure (\Cref{main thm}),\label{one}
    \item a spherical variety to be smooth along any given \(G\)-orbit (\Cref{smoothness theorem}).\label{two}
\end{enumerate}

These results are corollaries of the main theorem of this paper (\Cref{main thm}), which proves~\cite[Conjecture~1.4]{GH15}. 

\subsection{Main theorem}

Recall that spherical varieties are classified by \emph{Luna-Vust data},  which is introduced in \Cref{sec:background}, and used freely here. The spherical skeleton is a ``subset'' of the Luna-Vust datum, which was introduced in \cite{GH15} to approach the generalised Mukai conjecture for spherical Fano varieties. This conjecture was recently proven by the authors in \cite{GHPsphericalMukai} and forms the starting point of this work. 

Let \(R\) be the root system of a reductive group \(G\) with respect to a fixed maximal torus \(T \subseteq G\), with simple roots \(S\subseteq R\).

\begin{defin}[{\cite[Definition~5.1]{GH15}}]\label{def:spherical skeleton}
	The \emph{spherical skeleton} \(\mathscr{R}_X\) of a spherical variety \(X\) is a quadruple, equipped with two maps
    \[
		\mathscr{R}_X\coloneq(\Sigma,S^p,\D^a,\Gamma), \quad \varsigma \colon \D^a \to \mathcal{P}(S), \quad \rho' \colon \mathcal{D}^a \cup \Gamma \to \Lambda^*\coloneqq\Hom(\Lambda,\ZZ).
    \]
	Here, \(\Lambda\) is the sublattice of \(\M\) spanned by the spherical roots \(\Sigma\) of \(X\); \(S^p\) is the subset of simple roots whose minimal parabolic subgroups stabilise every color of \(X\); \(\D^a\) is the set of type \(a\) colors of \(X\); and \(\Gamma\) is the set of \(G\)-invariant prime divisors of \(X\).
    For each color \(D\in\D\) (of any type) of \(X\), \(\varsigma(D)\) is the set of simple roots whose associated minimal parabolic subgroup do not fix \(D\). To each \(B\)-invariant divisor \(D\in\D\cup\Gamma\), \(\rho'\) associates the dual vector \(\rho'(D)\coloneqq\rho(D)|_{\Lambda^*}\).
\end{defin}

The anticanonical class of a spherical variety \(X\) has a distinguished representative, namely
\[
    -K_X=\sum_{D\in\Delta}m_D D,
\]
where the coefficients \(m_D\) can be obtained from the spherical skeleton (see \Cref{rem:coeffs-anticanonical}) and \(\Delta \coloneq \D \cup \Gamma\) denotes the set of \(B\)-invariant prime divisors of \(X\) (which can be extracted from the spherical skeleton \(\mathscr{R}_X\)).

To each spherical skeleton \(\mathscr{R}_X\), we associate a polyhedron
\[
	\mathcal{Q}_{\mathscr{R}_X}^* \coloneq \bigcap_{D\in\Delta} \left\{v\in\Lambda_\QQ^* \, | \, \langle \rho'(D), v \rangle \geq -m_D \right\} \subseteq \Lambda_\QQ.
\]

Furthermore, the spherical skeleton comes with the following combinatorial function
\[
	\widetilde{\wp}(\mathscr{R}_X) \coloneqq |R^+| - |R_{S^p}^+| -\sup\left\{\sum_{D\in\Delta}m_D-1+\langle\rho'(D), \vartheta\rangle \, \mid \, \vartheta \in \mathcal{Q}^*_{\mathscr{R}_X}\cap\mathrm{cone}(\Sigma)\right\},
\]
where \(R_{S^p}\) is the sub-root system generated by \(S^p\), and \(R^+\) (resp.~\(R_{S^p}^+\)) is the set of positive roots of \(R\) (resp.~\(R_{S^p}\)).
\begin{rem}
    This definition is different to the definition of \(\wp(\mathscr{R}_X)\) in~\cite{GH15}, and they are related in the following way: \(\widetilde{\wp}(\mathscr{R}_X)=|R^+|-|R_{S^p}^+|-\wp(\mathscr{R}_X)\). Moreover either \(\widetilde{\wp}(\mathscr{R}_X)\le|R^+|-|R_{S^p}^+|\) is a rational number, or it is negative infinity, by \cite[Proposition~5.6]{GH15}.
\end{rem}

We now state our main theorem, which proves \cite[Conjecture~1.4]{GH15}, and answers \eqref{one}.

\begin{thm}\label{main thm} Let \(X\) be a complete spherical variety, then
\[
\widetilde{\wp}(\mathscr{R}_X)\ge0,
\]
with equality if and only if \(X\) is isomorphic to a toric variety.
\end{thm}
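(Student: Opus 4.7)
The plan is to interpret the linear program defining $\widetilde{\wp}(\mathscr{R}_X)$ geometrically and then apply the generalised spherical Mukai inequality from \cite{GHPsphericalMukai}. A lattice point $\vartheta \in \mathcal{Q}^*_{\mathscr{R}_X}$ corresponds to a $B$-semi-invariant section of $\mathcal{O}_X(-K_X)$: the constraints $\langle \rho'(D),\vartheta\rangle \geq -m_D$ are precisely the condition that $-K_X + \divv(f_\vartheta)$ is effective, with multiplicity $m_D + \langle \rho'(D), \vartheta\rangle$ along $D$. Requiring $\vartheta \in \cone(\Sigma)$ further restricts attention to those sections vanishing appropriately on the $G$-invariant divisors. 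The functional $L(\vartheta) \coloneqq \sum_D(m_D - 1 + \langle\rho'(D),\vartheta\rangle)$ then records a weighted boundary-intersection count for $\divv(s_\vartheta)$, while $|R^+|-|R_{S^p}^+|$ equals $\dim G/P$, where $P \supseteq B$ is the standard parabolic having $S^p$ as the simple roots in its Levi.

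For the inequality $\widetilde{\wp}(\mathscr{R}_X)\ge 0$, the target is the pointwise bound $L(\vartheta) \le |R^+| - |R_{S^p}^+|$ for every admissible $\vartheta$. Such a bound is essentially the codimension estimate underlying the generalised spherical Mukai inequality: the boundary components contributing to $L(\vartheta)$ trace out $B$-invariant subvarieties whose total codimension is controlled by $\dim G/P$, and this is the content of the main theorem of \cite{GHPsphericalMukai}. Taking the supremum over $\vartheta$ then yields the desired inequality.

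For the equality case I would analyse the extremal $\vartheta$ in both directions simultaneously. The implication ``$X$ toric $\Rightarrow \widetilde{\wp}(\mathscr{R}_X) = 0$'' is to be established by direct evaluation on the spherical skeleton, covering also the \emph{hidden} toric cases such as $\PP^n \cong \SL_{n+1}/P$; here one checks $L(\vartheta) = \dim G/P$ at a distinguished $\vartheta$ by matching the boundary multiplicities $m_D$ against the Schubert contributions. Conversely, $\widetilde{\wp}(\mathscr{R}_X) = 0$ saturates the Mukai bound, and the equality characterisation of \cite{GHPsphericalMukai} forces the Luna-Vust data of $X$ to match that of a toric variety, with the $G$-action factoring through a torus quotient along the $G/P$-part.

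The principal obstacle is the implication ``$\widetilde{\wp}(\mathscr{R}_X) = 0 \Rightarrow X$ is toric''. Promoting combinatorial tightness to the explicit construction of a torus with a dense orbit on $X$ requires isolating exactly which Luna-Vust configurations can saturate the Mukai bound. I expect this step to proceed via Luna's axioms on spherical roots, ruling out pseudo-toric configurations and ensuring that saturation forces every color and every $G$-invariant divisor to contribute maximally to the extremal $L(\vartheta)$, a condition realised only by genuine toric fans.
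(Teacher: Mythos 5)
Your appeal to the generalised spherical Mukai theorem of \cite{GHPsphericalMukai} skips over the central difficulty of this part of the paper: that theorem is only proved for complete \emph{\(\QQ\)-Gorenstein} spherical varieties, whereas \Cref{main thm} makes no such hypothesis. Your interpretation of \(\vartheta\in\mathcal{Q}^*_{\mathscr{R}_X}\) as giving a \(B\)-semi-invariant section of \(\mathcal{O}_X(-K_X)\) already presupposes that \(-K_X\) is \(\QQ\)-Cartier, and the ``codimension estimate'' you invoke is exactly the statement of \Cref{Spherical Mukai}, which cannot be cited for an arbitrary complete \(X\). The paper's actual proof of the inequality spends two sections (\Cref{sec:complete case proof,sec:proof-of-gorenstein-algorithm}) constructing, from the colored fan and Luna datum of \(X\), a complete \(\QQ\)-Gorenstein \(X'\) with \(\widetilde{\wp}(X')=\widetilde{\wp}(X)\): one enlarges the weight lattice by \(\ZZ\kappa\) so that all colors on a common ray have proportional \((\rho(D),m_D)\), adds two new \(G\)-invariant divisors \(Y^\pm\) to restore completeness, and then performs colored star subdivisions and triangulations to make every colored cone simplicial with colors on rays. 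Without this reduction (or some substitute for it), your argument for \(\widetilde{\wp}(\mathscr{R}_X)\ge0\) does not go through.

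The equality case in your proposal also diverges from what is needed. For ``\(X\) toric \(\Rightarrow\widetilde{\wp}=0\)'', ``direct evaluation on the spherical skeleton'' is not a proof: the paper must first invoke \cite[Theorem~6.7]{GH15} to identify \(\mathscr{R}_X\) with the skeleton of a multiplicity free space, reduce to indecomposable saturated ones (\Cref{mult free reduction}, which requires \Cref{lemma:normal-subgroup-p} and \Cref{cor:geometric-equivalence-preserves-p}), and then exhibit an explicit optimal \(\vartheta_{\mathrm{argmax}}\) for each of Knop's \(42\) families (\Cref{Table}); none of this computation is sketched in your proposal. For the converse, you correctly flag ``\(\widetilde{\wp}=0\Rightarrow X\) toric'' as the principal obstacle, but your proposed route---classifying the Luna--Vust configurations that saturate the bound via Luna's axioms---is not carried out and is not how the paper proceeds. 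The paper instead closes a cycle of implications \(\widetilde{\wp}(X)<1\Rightarrow X\text{ toric}\Rightarrow\widetilde{\wp}(X)=0\Rightarrow\widetilde{\wp}(X)<1\), where the first implication is inherited from the \(\QQ\)-Gorenstein case of \Cref{Spherical Mukai} by observing that \Cref{Gorenstein algorithm} preserves the equivalence class of the spherical skeleton, so that toricness of \(X'\) transfers back to \(X\) via \Cref{thm:toric-is-mult-free-space}. In short: the two load-bearing ingredients---the Gorenstein reduction algorithm and the multiplicity-free-space verification---are both absent from your proposal.
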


We now introduce the smoothness criterion which answers \eqref{two}. It utilises the following operation on spherical skeletons.

\begin{defin}[{\cite[Definition~7.1]{GH15}}]
	Given a spherical skeleton \(\mathscr{R}=(\Sigma,S^p,\D^a,\Gamma)\) and a subset \(I\subseteq\Delta\), define another spherical skeleton as follows.
	Let \(S_I\subseteq S\) be the subset of simple roots whose minimal parabolic subgroups do not fix some divisor in \(I\).
	Let \(R_I\) be the root system generated by \(S_I\).
	Now define
	\begin{gather*}
		\Sigma_I\coloneq\Sigma\cap\mathrm{span}_\ZZ S_I,\\
		S_I^p\coloneq S^p\cap S_I,\\
		\D_I^a\coloneq\{\text{type \(a\) colors with spherical roots in } \Sigma_I\},\\
		\Gamma_I\coloneq \{G\text{-invariant divisors in } I\}.
	\end{gather*}
	The \emph{localisation of \(\mathscr{R}\) at \(I\)} is defined as the spherical skeleton \(\mathscr{R}_I\coloneq(\Sigma_I,S^p_I,\D^a_I,\Gamma_I)\), equipped with the maps \(\rho_I'\coloneq\rho'|_{\Lambda_I^*}\) and \(\varsigma_I\coloneq\varsigma|_{{\D}_I}\), with \(\Lambda_I\coloneq\mathrm{span}_\ZZ(\Sigma_I)\).

\end{defin}

\begin{thm}\noindent\phantomsection\label{smoothness theorem}
	Let \(X\) be a spherical variety, and \(Y\subseteq X\) a \(G\)-orbit.
	Let \(I\subseteq\Delta\) be the set of \(B\)-invariant divisors \(D\) with \(Y\subseteq\overline{D}\).
	Then \(X\) is smooth along \(Y\) if and only if \(\widetilde{\wp}(\mathscr{R}_I)<1\).
\end{thm}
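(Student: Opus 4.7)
The strategy is to localise to the $G$-orbit $Y$, identify the relevant combinatorics with the localisation $\mathscr{R}_I$, and then apply \Cref{main thm} after passing to a suitable completion.

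First, smoothness along $Y$ is a local, Zariski-open condition, so I would replace $X$ by the canonical open $G$-stable neighbourhood $X^\circ \subseteq X$ of $Y$, namely the simple spherical subvariety whose unique closed orbit is $Y$; its colored cone is $(\mathcal{C}_Y, \mathcal{F}_Y)$ and its set of $B$-invariant prime divisors is precisely $I$. By Knop's local structure theorem, there is a parabolic $P \supseteq B$ with Levi decomposition $P = L \ltimes P_u$ and an $L$-stable affine subvariety $V \subseteq X^\circ$ meeting $Y$ such that the natural $P$-equivariant map $P_u \times V \to X^\circ$ is an isomorphism. Smoothness of $X^\circ$ along $Y$ is then equivalent to smoothness of $V$ along $V \cap Y$; moreover $V$ is spherical for the induced $L$-action, and its spherical skeleton is exactly $\mathscr{R}_I$. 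This is the geometric content of the algebraic operation of localisation at $I$.

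Next I would invoke \Cref{main thm} through a completion. Complete $V$ to a complete spherical $L$-variety $\overline{V}$ by adding only colored cones disjoint from $\mathcal{C}_Y$, so that the localisation $(\mathscr{R}_{\overline{V}})_I$ agrees with $\mathscr{R}_I$. By \Cref{main thm} one has $\widetilde{\wp}(\mathscr{R}_{\overline{V}}) \ge 0$, with equality iff $\overline{V}$ is toric. Classical smoothness criteria (cf.~Brion, Camus) characterise smoothness of $V$ along its closed orbit by an explicit condition on $(\mathcal{C}_Y, \mathcal{F}_Y)$: after a suitable ``color quotient'', the cone is generated by part of a $\ZZ$-basis of $\Lambda_I^*$. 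One must translate this condition into the quantitative inequality for $\widetilde{\wp}(\mathscr{R}_I)$: comparing the defining suprema on $\mathcal{Q}^*_{\mathscr{R}_I} \cap \cone(\Sigma_I)$ and on $\mathcal{Q}^*_{\mathscr{R}_{\overline{V}}} \cap \cone(\Sigma)$, the resulting ``completion gap'' should be shown to be strictly less than $1$ exactly when the classical smoothness condition holds, giving the desired equivalence.

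The principal obstacle is the final combinatorial identification: pinning down a distinguished point $\vartheta^* \in \mathcal{Q}^*_{\mathscr{R}_I} \cap \cone(\Sigma_I)$ at which the supremum is attained, computing the value of $\sum_D m_D - 1 + \langle \rho'(D), \vartheta^* \rangle$ explicitly in terms of the dimension of $Y$, the multiplicities $m_D$, and the cone generators, and checking that this value falls short of $|R^+| - |R^+_{S^p}|$ by strictly less than $1$ precisely in the smooth case. The power of using \Cref{main thm} is that it replaces any explicit model-by-model check (``external reference tables'') by this single numerical criterion.
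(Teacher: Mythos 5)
Your overall shape (localise at the orbit, then apply \Cref{main thm}) is the right one, but the two steps that carry all the weight are asserted rather than proved, and the second of them is set up in a way that cannot be closed as written.

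First, the claim that the local model $V$ from Knop's local structure theorem is an $L$-spherical variety whose spherical skeleton \emph{is} the combinatorially defined localisation $\mathscr{R}_I$, and that $\mathscr{R}_I$ is moreover (equivalent to) the skeleton of a \emph{complete} spherical variety so that \Cref{main thm} is applicable to it at all — this is precisely the content of \cite[Theorem~7.2 and Lemma~7.3]{GH15}, which the paper simply cites. You cannot treat it as "the geometric content of the algebraic operation of localisation"; it requires a genuine argument (note, for instance, that $\Sigma_I$ is cut out by $\mathrm{span}_\ZZ S_I$ and $\D^a_I$ is defined via spherical roots, not via $I$ directly, so the matching with the local model is not tautological). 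Second, and more seriously, your completion step does not do what you need: \Cref{main thm} applied to $\overline{V}$ gives information about $\widetilde{\wp}(\mathscr{R}_{\overline{V}})$, not about $\widetilde{\wp}(\mathscr{R}_I)$, and arranging that the \emph{localisation} of $\mathscr{R}_{\overline{V}}$ at $I$ equals $\mathscr{R}_I$ does not relate the two suprema — the added boundary divisors enter both the polyhedron $\mathcal{Q}^*$ and the objective function. Your acknowledgement that "the completion gap should be shown to be strictly less than $1$ exactly when the classical smoothness condition holds" is not a step of a proof; it is a restatement of the theorem.

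The paper's actual route avoids this entirely by using the multiplicity-free-space bridge, which your plan omits: by \cite[Theorem~7.2]{GH15}, $X$ is smooth along $Y$ if and only if $\mathscr{R}_I$ is the spherical skeleton of a multiplicity free space; by \Cref{thm:toric-is-mult-free-space} this is equivalent to the complete spherical variety realising $\mathscr{R}_I$ being isomorphic to a toric variety; and by \Cref{main thm} together with \Cref{Spherical Mukai} that is equivalent to $\widetilde{\wp}(\mathscr{R}_I)=0$, which (since $\widetilde{\wp}\ge 0$ and $\widetilde{\wp}<1$ already forces toricness) is equivalent to $\widetilde{\wp}(\mathscr{R}_I)<1$. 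Without that chain, there is no mechanism in your argument that produces the threshold $1$, and no identification of the optimiser $\vartheta^*$ is ever carried out. To repair the proposal you would either have to cite \cite[Theorem~7.2]{GH15} (at which point the proof is two lines) or reprove it, which amounts to redoing the Camus--Gagliardi classification.
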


The smoothness criterion, \Cref{smoothness theorem}, follows from \Cref{main thm} and \cite[Theorem~7.2]{GH15}. It is based on the smoothness criterion of Camus from 2001 for type \(A\) spherical varieties \cite{Camus2001}, which was generalised by the second author to all types in 2014 \cite{GagliardiMultiplicityFree}, and conjecturally reformulated in terms of spherical skeletons in 2015 by the first two authors \cite{GH15}.

We note other results in the direction of smoothness criteria for spherical varieties. The first smoothness criterion for spherical varieties was obtained in 1991 by Brion \cite{Brion1991}.
A combinatorial smoothness criterion for horospherical varieties was obtained independently by Pasquier in 2006 \cite{Pasquier2006} and Timashev \cite{Timashev} in 2011. Alternatively in 2013, Batyrev and Moreau \cite{BatyrevMoreau2013} proved a smoothness criterion for horospherical varieties using stringy Euler numbers, and this criterion is conjectured to extend to arbitrary spherical varieties. The smooth affine spherical varieties were classified by Knop and Van Steirteghem in 2006 \cite{KnopVanSteirteghem2006}. In 2017, the weight monoids of smooth affine spherical varieties were combinatorially classified by Pezzini and Van Steirteghem \cite{PezziniVanSteirteghem2017}.

\subsection{Technique of the proof} To each spherical variety, we may associate the quantity \(\widetilde{\wp}(X)\coloneq\widetilde{\wp}(\mathscr{R}_X)\). The starting point of this paper is the following result of the authors, which was used to prove the generalised Mukai conjecture for \(\QQ\)-factorial spherical Fano varieties in \cite{GHPsphericalMukai}.

\begin{thm}[{\cite[Corollary of Theorem~1.6]{GHPsphericalMukai}}]
    \noindent\phantomsection\label{Spherical Mukai} Let \(X\) be a complete \(\QQ\)-Gorenstein spherical variety, then
    \[
        \widetilde{\wp}(X)\ge0,
    \]
    with \(\widetilde{\wp}(X)<1\) only if \(X\) is isomorphic to a toric variety.
\end{thm}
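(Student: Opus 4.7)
The plan is to derive this theorem from the main result of the companion paper \cite{GHPsphericalMukai}, which establishes the generalised Mukai conjecture for $\QQ$-factorial spherical Fano varieties. Two reductions are required: from complete $\QQ$-Gorenstein to $\QQ$-factorial, and from $\QQ$-factorial complete to $\QQ$-factorial Fano. Throughout I would exploit that $\widetilde{\wp}(\mathscr{R}_X)$ is built only from the spherical skeleton, not the full colored fan, and hence is insensitive to many birational modifications.

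For the first reduction, I would take a small $\QQ$-factorialisation $X' \to X$. Combinatorially this is a simplicial refinement of the colored fan of $X$ that introduces no new rays and no new colors, so the $B$-invariant prime divisors, their $\rho'$-images, the spherical roots $\Sigma$, the set $S^p$, and the type-$a$ colors $\D^a$ are all preserved. Consequently $\mathscr{R}_{X'} = \mathscr{R}_X$ and $\widetilde{\wp}(X') = \widetilde{\wp}(X)$. For the second reduction, I would run the equivariant $K$-MMP on $X'$. The polytope $\mathcal{Q}^*$ expands whenever a defining hyperplane is removed by an extremal contraction, while the corresponding summand in the supremum simultaneously vanishes; careful bookkeeping, using that the anticanonical coefficients $m_D$ are positive, shows that $\widetilde{\wp}$ is non-increasing along these contractions. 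If the MMP terminates in a Mori fibration, an inductive argument on the dimension of the spherical base reduces the problem further. Eventually one arrives at a $\QQ$-factorial Fano spherical variety, on which $\widetilde{\wp}\ge 0$ is the content of \cite[Theorem~1.6]{GHPsphericalMukai}, and the monotonicity propagates this back to $\widetilde{\wp}(X)\ge 0$.

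The main obstacle is the gap statement that $\widetilde{\wp}(X) < 1$ forces $X$ to be toric, rather than merely Mukai-extremal. In the classical setting the equality case $\rho_X(i_X - 1) = \dim X$ characterises products of projective spaces, whereas the sharper spherical statement demands that strict inequality below $1$ already collapses the group action onto a torus. Here the offset $|R^+| - |R_{S^p}^+|$ plays a decisive role: it is a non-negative integer whose vanishing is necessary for $X$ to be toric. I would argue that if $X$ is not toric then either $S^p \subsetneq S$ or $\D^a \neq \emptyset$; in each case a careful estimate of the linear functional $\sum_{D\in\Delta}(m_D - 1 + \langle\rho'(D), \vartheta\rangle)$ over $\mathcal{Q}^* \cap \cone(\Sigma)$ forces its supremum to stay at least $1$ below the offset, yielding $\widetilde{\wp}(X) \ge 1$. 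Combined with the Mukai reduction above this closes the argument, since the toric characterisation then transfers through the small $\QQ$-factorialisation and MMP contractions without loss.
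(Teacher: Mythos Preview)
The paper does not contain a proof of this statement: it is quoted verbatim as a corollary of \cite[Theorem~1.6]{GHPsphericalMukai} and used as a black box. There is therefore no ``paper's own proof'' to compare against; the authors treat it as an external input and build the rest of the argument on top of it.

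That said, your sketch has a concrete error in the toric characterisation. You assert that the vanishing of the offset \(|R^+|-|R_{S^p}^+|\) is \emph{necessary} for \(X\) to be toric, and then try to force \(\widetilde{\wp}(X)\ge 1\) in the non-toric case by arguing that either \(S^p\subsetneq S\) or \(\D^a\neq\emptyset\). This is false already for the most basic examples: \(\PP^n\) with its \(\SL_{n+1}\)-action is a toric variety for which \(S^p=\emptyset\), the offset equals \(\binom{n+1}{2}>0\), and \(\D^a=\emptyset\). Being ``isomorphic to a toric variety'' is a statement about the underlying variety, not about the acting group, and the paper's own toricness criterion (\Cref{thm:toric-is-mult-free-space}) goes through multiplicity free spaces rather than through vanishing of the offset. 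So your proposed dichotomy does not separate toric from non-toric, and the estimate you outline cannot be made to work along these lines.

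Your MMP reduction is also only heuristic: the claim that \(\widetilde{\wp}\) is monotone non-increasing under extremal contractions and Mori fibrations is asserted but not argued, and contractions genuinely change \(\Gamma\) (hence the skeleton), so ``careful bookkeeping'' here is doing real work that you have not supplied. The small \(\QQ\)-factorialisation step, by contrast, is fine: simplicial refinement without new rays does preserve the skeleton, as you say.
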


The main theorem of this paper lifts the \(\QQ\)-Gorenstein hypothesis of \Cref{Spherical Mukai}, and proves \(\widetilde{\wp}(X)=0\) when \(X\) is isomorphic to a toric variety. In both of these steps we frequently use the flexibility enabled by the coarseness of spherical skeletons. Together, this proves \Cref{main thm}.

Let us now give an overview of the structure of the paper.
Firstly, in \Cref{sec:complete case proof} we introduce an algorithm which replaces a complete spherical variety \(X\) with a complete \(\QQ\)-Gorenstein spherical variety \(X'\) such that \(\widetilde{\wp}(X)=\widetilde{\wp}(X')\). In \Cref{sec:proof-of-gorenstein-algorithm} we prove the correctness of this algorithm. Next, in \Cref{sec:multiplicity free} we show the statement: ``\(\widetilde{\wp}(X)=0\) if \(X\) is isomorphic to a toric variety'', is equivalent to \(\widetilde{\wp}(V)=0\) where \(V\) is a member of one of \(42\) families of multiplicity free spaces. Finally, in \Cref{Table} we explicitly verify this equality for each of these \(42\) families, completing the proof of \Cref{main thm}.

In \Cref{sec:examples} we give examples illustrating the smoothness criterion \Cref{smoothness theorem}.


\section{Defining the \texorpdfstring{\(\widetilde{\wp}\)}{p-tilde} function}\noindent\phantomsection\label{sec:background}In this section we briefly overview aspects of the combinatorial description of spherical varieties which are required to define \(\widetilde{\wp}(\cdot)\). A general reference is~\cite{Timashev}. 

In this paper, \(G\) denotes a connected reductive algebraic group, \(B\subseteq G\) a Borel subgroup, \(X\) a spherical \(G\)-variety, and \(H\) the stabiliser of a point in the open \(B\)-orbit of \(X\).

Every spherical variety \(X\) is a \(G\)-equivariant open embedding of a \(G\)-homogeneous space \(G/H\) with an open \(B\)-orbit, into a normal irreducible \(G\)-variety. Such \(G/H\) are called \emph{spherical homogeneous spaces}, and such embeddings \(G/H\hookrightarrow X\) are called \emph{spherical embeddings}. These form the two natural halves of the combinatorial classification of \(G\)-spherical varieties for a fixed \(G\):
\begin{enumerate}
\item{} The classification of spherical homogeneous spaces \(G/H\)—classified by \emph{Luna data}, also called \emph{spherical homogeneous data} (see~\cite[Section~30.11]{Timashev} or~\cite{FoundationsLunaData}).
\item{} The classification of spherical embeddings \(G/H\hookrightarrow X\) of a fixed spherical homogeneous space \(G/H\)—classified by the \emph{Luna-Vust theory} of \emph{colored fans} (see~\cite{Knop2012}).
\end{enumerate}

We introduce the Luna-Vust theory in this section, and in \Cref{sec:complete case proof}, we introduce the required elements of the Luna data as they are needed.

Firstly, we associate to \(X\) its character lattice of \(B\)-weights
\[
\M\coloneq\{\chi\mid\text{there exists } f_\chi\in\CC(X)\text{ with }b\cdot f_\chi(x)=\chi(b)f_\chi(x) \text{ for all }x\in X,\,b\in B\}\subseteq\mathfrak{X}(B).
\]

We denote by \(\rk X\) the rank of the lattice \(\M\).

Write \(\Delta\coloneq\Delta_X\) to denote the finite set of \(B\)-invariant prime divisors in \(X\). As in the classification of toric varieties by fans, to each \(D \in \Delta\) there is a dual vector \(\rho(D) \in \N \coloneqq \Hom(\M,\ZZ)\) defined by \(\langle \rho(D), \chi\rangle \coloneqq \nu_D(f_\chi)\), where \(\langle \cdot, \cdot\rangle \colon \N \times \M \to \ZZ\) is the dual pairing. 

The dual vectors associated to \(G\)-invariant prime divisors of \(X\) are primitive vectors in \(\N\subset\N_\QQ\coloneq\N\otimes_\ZZ\QQ\) which lie within a certain full-dimensional rational polyhedral cone, the \emph{valuation cone}:
\[
	\V \coloneqq \cone\left\{ \rho(D)\mid D\in\Delta_X \text{ is }G\text{-invariant, for a spherical embedding }G/H\hookrightarrow X\right\}\subseteq\N_\QQ.
\]

Let \(G/H \hookrightarrow X\) be a spherical embedding. 
There is a finite set \(\D\subseteq\Delta\) of \(B\)-invariant prime divisors in \(X\), called \emph{colors}, arising as the closures of \(B\)-invariant divisors in the spherical homogeneous space \(G/H\).
A \emph{colored cone} is a pair \((\mathcal{C}, \mathcal{F})\) such that \(\mathcal{F} \subseteq \mathcal{D}\) and \(\mathcal{C} \subseteq \N_\QQ\) is a strictly convex cone generated by \(\rho(\mathcal{F})\) and finitely many elements of \(\V \cap \N\), satisfying \(\rho(D)\neq0\) for all \(D\in\mathcal{F}\), and \(\mathcal{C}^\circ \cap \V \neq \emptyset\), where \(\mathcal{C}^\circ\) denotes the relative interior of \(\mathcal{C}\).
Furthermore, a face of a colored cone \((\mathcal{C},\mathcal{F})\) is a colored cone \((\mathcal{C}', \mathcal{F}')\) such that \(\mathcal{C}'\) is a face of \(\mathcal{C}\) in the usual convex geometric sense, and \(\mathcal{F}' = \{ D \in \mathcal{F} \mid \rho(D) \in \mathcal{C}'\}\).

With this notion, one defines a \emph{colored fan} as a finite collection \(\mathbb{F}=\{(\mathcal{C}_i, \mathcal{F}_i)\}\) of colored cones, such that each face of a colored cone in \(\mathbb{F}\) is also in \(\mathbb{F}\), and such that each point in the valuation cone is contained in the relative interior of at most one colored cone in \(\mathbb{F}\).

Then, according to the Luna-Vust theory, there is a bijection between colored fans and isomorphism classes of spherical embeddings of \(G/H\).
Moreover, under this bijection, complete spherical varieties correspond to colored fans \(\mathbb{F}\) which cover the valuation cone, i.e.\ \(\V \subseteq |\mathbb{F}|\) where \(|\mathbb{F}|\coloneq \bigcup_i \mathcal{C}_i\subseteq \N_\QQ\) denotes the \emph{support} of the colored fan \(\mathbb{F}\).

In~\cite[Proposition~4.1]{BrionAnticanonical}, Brion obtained a natural representative of the anticanonical class as a positive linear combination of the \(B\)-invariant divisors
\[
	-K_X \coloneqq \sum_{D \in \Delta} m_D D.
\]
The coefficients \(m_D\ge1\) may be computed using the Luna data (see \Cref{rem:coeffs-anticanonical}).

Next, we associate to \(X\) a polyhedron:
\[
	Q^*\coloneqq \bigcap_{D \in \Delta} \left\{ v \in \M_{\QQ} \mid \langle \rho(D), v \rangle \ge -m_D \right\}\subseteq\M_\QQ.
\]
When \(X\) is complete, \(Q^*\) is a polytope. Moreover if \(X\) is \(\QQ\)-Gorenstein Fano, then \(Q^*\) is a translation of the moment polytope of the anticanonical divisor of \(X\).

\begin{defin}\noindent\phantomsection\label{def: P}
	Let \(X\) be a spherical variety, then 
	\[
		\widetilde{\wp}(X) \coloneq \dim X-\rk X-\sup \left\{ \sum_{D\in\Delta}\left(m_D-1+\big\langle\rho(D),\vartheta\big\rangle\right)\,\big|\,\vartheta\in Q^*\cap\T \right\},
	\]
	where \(\T \coloneq -\V^\vee\) is the negative of the dual cone of \(\V\).
\end{defin}

\begin{rem}
This definition relates to the definition of \(\wp(X)\) in~\cite{GH15} in the following way: \(\widetilde{\wp}(X)=\dim X-\rk X-\wp(X)\).
\end{rem}

\section{The inequality in \texorpdfstring{\Cref{main thm}}{Theorem~\ref{main thm}}}\noindent\phantomsection\label{sec:complete case proof}In this section we prove that the \(\QQ\)-Gorenstein assumption of \Cref{Spherical Mukai} may be lifted.
This is achieved by reducing the complete case to the complete \(\QQ\)-Gorenstein setting of \Cref{Spherical Mukai} by operating on the spherical combinatorial data.

In particular, given the colored fan of a complete spherical variety \(X\), in \Cref{Gorenstein algorithm} we construct the colored fan of a complete \(\QQ\)-Gorenstein spherical variety \(X'\) in a way such that \(\widetilde{\wp}(X)=\widetilde{\wp}(X')\) is preserved, and such that a spherical variety isomorphic to a toric variety is sent to another spherical variety isomorphic to a toric variety. 

Throughout, we describe spherical homogeneous spaces combinatorially using their \emph{Luna data} (also called \emph{spherical homogeneous data}); for details we refer to~\cite[Section~30.11]{Timashev} or~\cite{FoundationsLunaData}.

To begin, we make some observations about the dual vectors \(\rho(D)\in\N\) associated to the colors \(D\in\D\) of a spherical embedding.

Let \(X\) be a \(G\)-spherical variety.
Denote the root system of \(G\) with respect to a fixed maximal torus \(T \subseteq G\) by \(R\), and let \(S\subseteq R\) be the choice of simple roots corresponding to our choice of Borel subgroup \(B \subseteq G\).
We say that a color \(D\in\D\) is \emph{moved} by \(\alpha\in S\) if the minimal parabolic subgroup \(P_\alpha\), that contains \(B\) and is associated to \(\alpha\), satisfies \(P_\alpha\cdot D\neq D\).
Denote by \(S^p\) the set of simple roots which move no colors of \(X\).
Recall that the set of colors of a spherical variety is split into three disjoint cases:
\[
    \text{a color } D \in \D \text{ is said to be of type}\begin{cases}
        a & \text{if \(D\) is moved by some } \alpha \in S \cap \Sigma,\\
        2a & \text{if \(D\) is moved by some } \alpha \in S \cap \frac12\Sigma,\\
        b & \text{otherwise}.
    \end{cases}
\]
\begin{rem}
    We list some properties of colors which are required below and follow directly from the axioms of Luna data (see, for instance, \cite[Definition~30.21]{Timashev}):
    \begin{itemize}
    \item
        Colors of type \(a\) come in pairs, that is, for each \(\alpha \in S \cap \Sigma\) there are two colors \(D_\alpha^+\) and \(D_\alpha^-\) that are moved by \(\alpha\) such that \(\rho(D_\alpha^+) + \rho(D_\alpha^-)=\alpha^\vee|_\M\).
    \item 
        We have \(\langle \rho(D_\alpha^\pm), \gamma \rangle \le 1\) for all \(\gamma \in \Sigma\), with equality if and only if \(\gamma = \alpha\).
    \item 
        Each spherical root is a non-negative linear combination of simple roots.
        We write \(\supp{\gamma}\subseteq S\) for the set of simple roots that appear in \(\gamma\).
    \end{itemize}
\end{rem}

\begin{defin}
    Call a pair of colors \(D_1,D_2\) \emph{codirectional} if \(\rho(D_1)=k\rho(D_2)\) for some rational number \(k>0\). 
\end{defin}

\begin{prop}\noindent\phantomsection\label{colors codirectional}
    Suppose that \(D_1,D_2\in\D\) are codirectional.
    Then either: \(D_1,D_2\) are a pair of colors of type \(a\) with \(\rho(D_1)=\rho(D_2)\), or \(D_1,D_2\) are colors of type \(b\).
\end{prop}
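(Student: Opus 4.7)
The plan is to carry out an exhaustive case analysis on the types of the two colors \(D_1, D_2 \in \mathcal{D}\), showing that in every combination other than those permitted by the conclusion a codirectionality relation \(\rho(D_1) = k\rho(D_2)\) with \(k>0\) forces a contradiction. Up to symmetry there are six type combinations among \(\{a, 2a, b\}\), and the uniform mechanism in each mixed case is to pair the presumed relation against a spherical root chosen so that one side is strictly positive while the other is forced non-positive through the sign of a Cartan integer between distinct simple roots.

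In the case where both colors are of type \(a\), say \(D_1 = D_\alpha^\pm\) and \(D_2 = D_\beta^\pm\) with \(\alpha, \beta \in S \cap \Sigma\), I use only the inequality stated in the excerpt. Pairing \(\rho(D_1) = k\rho(D_2)\) with the spherical root \(\alpha\) gives \(1 = \langle \rho(D_1), \alpha\rangle = k\langle \rho(D_2), \alpha\rangle \le k\), and the symmetric pairing at \(\beta\) yields \(k \le 1\); hence \(k=1\) and \(\rho(D_1) = \rho(D_2)\), as required.

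For each of the mixed cases I plan to invoke the standard Luna-data identities \(\rho(D_\alpha) = \tfrac{1}{2}\alpha^\vee|_{\mathcal{M}}\) for a type \(2a\) color (where \(2\alpha \in \Sigma\)) and \(\rho(D_\alpha) = \alpha^\vee|_{\mathcal{M}}\) for a type \(b\) color, alongside the fact that \(\langle\alpha^\vee,\beta\rangle \le 0\) for distinct simple roots. For a type \(a\) versus type \(b\) pair, \(D_1 = D_\alpha^\pm\) satisfies \(\alpha \in \Sigma\) while \(D_2 = D_\beta\) satisfies \(\beta \notin \Sigma\), forcing \(\alpha \neq \beta\); pairing the relation with \(\alpha\) gives \(1 = k\langle \beta^\vee, \alpha\rangle \le 0\), a contradiction. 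In the cases involving a type \(2a\) color \(D_\gamma\), one pairs instead with the spherical root \(2\gamma\), using \(\langle \gamma^\vee, \gamma\rangle = 2\) to produce a positive value on the type \(2a\) side, while the other side reduces to a non-positive Cartan integer and thus is contradicted.

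The main obstacle is purely bookkeeping: in each mixed case one must select the correct pairing element (either \(\alpha\) or \(2\alpha\)) so that the relevant Luna-data identity applies in \(\Lambda \subseteq \mathcal{M}\) and the positive value on one side of the relation is actually extracted. Once the pairing is identified, each forbidden case is ruled out by a single one-line inequality, and only the two cases named in the conclusion survive.
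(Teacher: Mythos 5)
Your proposal is correct and follows essentially the same route as the paper: a case analysis on the types of \(D_1,D_2\), ruling out every mixed case by pairing the relation \(\rho(D_1)=k\rho(D_2)\) against the spherical root \(\lambda\alpha\) attached to the type \(a\) or \(2a\) color and using \(\langle\beta^\vee,\alpha\rangle\le 0\) for distinct simple roots. The only (harmless) variation is in the \((a,a)\) case, where the paper deduces \(k\in\ZZ\) from \(\rho(D_\beta^+)+\rho(D_\beta^-)=\beta^\vee|_{\M}\) and then concludes \(k=1\), whereas you obtain \(k=1\) directly by applying the bound \(\langle\rho(D_\gamma^\pm),\delta\rangle\le 1\) (with equality iff \(\delta=\gamma\)) symmetrically at \(\alpha\) and \(\beta\).
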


\Cref{colors codirectional} is a corollary of the following two propositions.
    
\begin{prop}\noindent\phantomsection\label{different-spherical-roots-non-codirectional}
    Let \(D\) be a color of type \(a\) or \(2a\) and let \(D'\neq D\) be a color that is not of type \(a\).
    Then \(D\) and \(D'\) are not codirectional. 
\end{prop}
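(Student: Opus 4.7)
The approach will be a direct sign check. I will pair $\rho(D)$ against a judiciously chosen spherical root $\gamma \in \Sigma$ so that $\langle \rho(D), \gamma \rangle > 0$, and then show that the standard Luna-datum formulas for $\rho(D')$ force $\langle \rho(D'), \gamma \rangle \le 0$. Under the codirectionality hypothesis $\rho(D') = k\rho(D)$ with $k > 0$, these two facts are incompatible.

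The first step is to select the witness $\gamma$ according to the type of $D$. If $D = D_\alpha^\pm$ is of type $a$ (so $\alpha \in S \cap \Sigma$), I take $\gamma \coloneqq \alpha$; the equality case in the Remark then yields $\langle \rho(D), \gamma \rangle = 1$. If instead $D$ is of type $2a$ associated to some $\alpha$ (so $2\alpha \in \Sigma$ and $\rho(D) = \tfrac{1}{2}\alpha^\vee|_\M$), I take $\gamma \coloneqq 2\alpha$, giving $\langle \rho(D), \gamma \rangle = 2$. In both cases the pairing is strictly positive.

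Next, I unpack $\rho(D')$ according to its type using standard Luna-datum descriptions: if $D'$ is of type $2a$ moved by $\beta$ then $\rho(D') = \tfrac{1}{2}\beta^\vee|_\M$, while if $D'$ is of type $b$ moved by some $\beta \in S \setminus (\Sigma \cup \tfrac{1}{2}\Sigma)$ then $\rho(D') = \beta^\vee|_\M$. In either subcase $\langle \rho(D'), \gamma \rangle$ reduces to a positive rational multiple of the Cartan entry $\beta^\vee(\alpha)$.

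The proof concludes by verifying $\alpha \neq \beta$ and invoking the non-positivity of off-diagonal Cartan entries to obtain $\beta^\vee(\alpha) \le 0$, the desired contradiction. Distinctness is immediate whenever $D'$ is of type $b$, since then $\beta \notin \Sigma \cup \tfrac{1}{2}\Sigma$ while $\alpha \in \Sigma \cup \tfrac{1}{2}\Sigma$; when both $D$ and $D'$ are of type $2a$, distinctness follows from the bijection between type-$2a$ colors and their moving simple roots together with $D \neq D'$; and when $D$ is of type $a$ but $D'$ of type $2a$, $\alpha = \beta$ would place the collinear pair $\alpha, 2\alpha$ into the linearly independent set $\Sigma$, which is impossible. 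The only real obstacle is having the correct formulas for $\rho(D)$ in each color type ready at hand—these are Luna-datum axioms rather than things to re-derive in the proof—after which the argument is purely a signed-pairing computation.
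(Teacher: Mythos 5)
Your proposal is correct and follows essentially the same route as the paper: pair both dual vectors against the spherical root \(\gamma=\lambda\alpha\) attached to \(D\), note \(\langle\rho(D),\gamma\rangle>0\) while \(\rho(D')\) is a positive multiple of \(\beta^\vee|_\M\) with \(\beta\neq\alpha\), and conclude from the non-positivity of off-diagonal Cartan pairings. You are in fact slightly more careful than the paper in justifying \(\alpha\neq\beta\) in each subcase, which the paper simply asserts.
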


\begin{proof}
    Let \(\gamma = \lambda \alpha \in \Sigma\) (\(\lambda \in \{1,2\}\)) be the spherical root associated to \(D\).
    Notice that \(\langle \rho(D), \gamma \rangle > 0\).
    Assume for a contradiction that \(D'\neq D\) is a color not of type \(a\) which is codirectional with \(D\), so \(\rho(D')=\ell\beta^\vee|_\M\) for some \(\ell>0\) and \(\alpha\neq\beta\in S\), and \(\rho(D)=k\rho(D')\) for \(k>0\).
    Since \(\beta\notin\supp{\gamma}=\{\alpha\}\),  it follows that \(0<\langle\rho(D),\gamma\rangle=\langle k\rho(D'),\gamma\rangle=k\ell\langle\beta^\vee,\gamma\rangle \leq 0\), so no such \(k\) exists.
\end{proof}

\begin{prop}\noindent\phantomsection\label{prop:two-codirectional-cols-type-a}
    Let \(D_1, D_2\) be two codirectional colors of type \(a\).
    Then \(D_1\) and \(D_2\) are moved by the same simple root \(\alpha \in S \cap \Sigma\) and \(\rho(D_1) = \rho(D_2)\).
\end{prop}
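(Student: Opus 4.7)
The plan is to pick simple roots $\alpha, \beta \in S \cap \Sigma$ moving $D_1, D_2$ respectively, write $\rho(D_1) = k\rho(D_2)$ for some $k > 0$, and split into the cases $\alpha \neq \beta$ and $\alpha = \beta$. The two bullet points in the remark following \Cref{colors codirectional} — the sum relation $\rho(D_\alpha^+) + \rho(D_\alpha^-) = \alpha^\vee|_\M$ and the pairing inequality $\langle \rho(D_\alpha^\pm), \gamma\rangle \le 1$ with equality iff $\gamma = \alpha$ — do all the work.

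First I would dispose of the case $\alpha \neq \beta$. Evaluating the codirectionality relation at the spherical root $\alpha \in \Sigma \subseteq \M$ yields $\langle \rho(D_1), \alpha\rangle = k\langle \rho(D_2), \alpha\rangle$. The left side equals $1$ by the equality case of the pairing inequality applied to $D_1$. For the right side, the same inequality applied to $D_2$ gives $\langle \rho(D_2), \alpha\rangle \le 1$, with equality only if $\alpha = \beta$; since $\langle \rho(D_2), \alpha\rangle \in \ZZ$ (as $\alpha \in \M$ and $\rho(D_2) \in \N$) and we are assuming $\alpha \neq \beta$, we actually have $\langle \rho(D_2), \alpha\rangle \le 0$. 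Combined with $k > 0$ this forces $1 \le 0$, a contradiction. So $\alpha = \beta$.

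It remains to show that in the case $\alpha = \beta$ we in fact get $\rho(D_1) = \rho(D_2)$. If $D_1 = D_2$ there is nothing to prove, so assume $\{D_1, D_2\} = \{D_\alpha^+, D_\alpha^-\}$ (the pair of type $a$ colors moved by $\alpha$). Then $\rho(D_1) + \rho(D_2) = \alpha^\vee|_\M$, and substituting $\rho(D_1) = k\rho(D_2)$ gives $(k+1)\rho(D_2) = \alpha^\vee|_\M$. Evaluating at $\alpha$ and using $\langle \rho(D_2), \alpha\rangle = 1$ together with $\langle \alpha^\vee, \alpha\rangle = 2$ yields $k + 1 = 2$, so $k = 1$ and $\rho(D_1) = \rho(D_2)$, as required.

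This argument is short and I do not anticipate a real obstacle; the only thing one has to be careful about is noting that $\langle \rho(D_2), \alpha \rangle$ takes integer values, so that the strict inequality ``$< 1$'' can be strengthened to ``$\le 0$'' in the case $\alpha \neq \beta$. Everything else is mechanical use of the two listed properties of type $a$ colors.
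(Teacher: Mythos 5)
Your proof is correct and follows essentially the same route as the paper's: both arguments rest on the equality case of \(\langle\rho(D_{\alpha}^{\pm}),\gamma\rangle\le 1\), the integrality of the pairing, and the sum relation \(\rho(D_\alpha^+)+\rho(D_\alpha^-)=\alpha^\vee|_\M\). The paper runs these in a single thread (deducing \(k\in\ZZ\) and \(0<k\le1\), hence \(k=1\) and then \(\alpha=\beta\)), whereas you split into the cases \(\alpha\neq\beta\) and \(\alpha=\beta\); the difference is purely organisational.
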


\begin{proof}
    Suppose that \(D_1\) is moved by \(\alpha \in S \cap \Sigma\), \(D_2\) is moved by \(\beta \in S \cap \Sigma\), and \(\rho(D_2)=k\rho(D_1)\) for some \(k>0\).
    Let \(D_\alpha^+,D_\alpha^-\) (resp.~\(D_\beta^+, D_\beta^-\)) be the pair of colors of type \(a\) that are moved by \(\alpha\) (resp.~\(\beta\)).
    Then \(D_1 \in \{D_\alpha^-, D_\alpha^+\}\) and \(D_2 \in \{D_\beta^-, D_\beta^+\}\).
    Without loss of generality assume \(D_2 = D_\beta^+\).
    Then, since \(\langle \rho(D_1), \alpha \rangle = 1\), we have \(\langle \rho(D_\beta^+), \alpha\rangle = k\).
    The colors \(D_\beta^\pm\) satisfy \(\langle \rho(D_\beta^+),\alpha\rangle + \langle\rho(D_\beta^-),\alpha\rangle = \langle\beta^\vee,\alpha\rangle\), so \(k=\langle\beta^\vee,\alpha\rangle-\langle\rho(D_\beta^-),\alpha\rangle\in\ZZ\).
    Then as \(0<k=\langle \rho(D_\beta^+),\alpha\rangle\le1\), the only possibility is \(k=1\).
    In particular, we get \(\alpha = \beta\).
\end{proof}

\begin{proof}[Proof of \Cref{colors codirectional}]
    If \(D_1\) and \(D_2\) are not both of type \(b\), then the unordered tuple of types of \(D_1\) and \(D_2\), denoted \((t_1,t_2)\), are one of the following cases:
    \[
        (t_1,t_2) \in \{(a,a), (a,2a), (a,b), (2a,2a), (2a,b)\}.
    \]
    By \Cref{different-spherical-roots-non-codirectional}, \(D_1\) and \(D_2\) are not codirectional in each of these cases, except when \(D_1\) and \(D_2\) are both of type \(a\), in which case \Cref{prop:two-codirectional-cols-type-a} shows that \(\rho(D_1)=\rho(D_2)\).
    This completes the proof of \Cref{colors codirectional}.
\end{proof}

\begin{rem}\noindent\phantomsection\label{rem:coeffs-anticanonical}
	We list some properties of the coefficients \(m_D\) in the standard spherical anticanonical divisor of a spherical variety \(X\hookleftarrow G/H\).
	By~\cite[Theorem~4.2]{BrionAnticanonical} (see also~\cite[\S3.6]{LunaAnticanonical} or~\cite[Theorem~1.5]{GorensteinFano}), the anticanonical sheaf of \(G/H\) is associated to the \(B\)-weight \(\kappa=2(\rho_S-\rho_{S^p})\), where for \(I \subseteq S\) we denote by \(\rho_I\) the half sum of positive roots in the root system generated by \(I\). 
    For each color \(D \in \D\) which is moved by a simple root \(\alpha \in S\), the coefficients \(m_D\) are:
	\begin{align*}
		m_D &= \frac12 \langle \alpha^\vee, \kappa\rangle = 1 && \text{for \(D\) of type \(a\) or \(2a\)},\\
		m_D &= \langle \alpha^\vee, \kappa \rangle \ge 2 && \text{for \(D\) of type \(b\)}.
	\end{align*}
\end{rem}

\begin{prop}\noindent\phantomsection\label{Gorenstein Luna datum}
    If \(\ell\kappa\in\M\) for some \(\ell > 0\), then for every pair of colors \(D_1,D_2\), either \(\rho(D_1)\) and \(\rho(D_2)\) span different rays or \(\rho(D_1)/m_{D_1}=\rho(D_2)/m_{D_2}\).
\end{prop}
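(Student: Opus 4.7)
The plan is to reduce to cases using \Cref{colors codirectional}. Suppose $D_1$ and $D_2$ are such that $\rho(D_1)$ and $\rho(D_2)$ span the same ray (otherwise there is nothing to prove). Then $D_1$ and $D_2$ are codirectional, and \Cref{colors codirectional} forces one of two scenarios: either both $D_1, D_2$ are of type $a$ with $\rho(D_1)=\rho(D_2)$, or both are of type $b$.

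In the type $a$ case, \Cref{rem:coeffs-anticanonical} tells us that $m_{D_1}=m_{D_2}=1$, so the identity $\rho(D_1)/m_{D_1}=\rho(D_2)/m_{D_2}$ is immediate from $\rho(D_1)=\rho(D_2)$.

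For the type $b$ case, I would use the standard fact that a type $b$ color $D_i$ moved by a simple root $\alpha_i$ satisfies $\rho(D_i)=\alpha_i^\vee|_\M$, together with $m_{D_i}=\langle \alpha_i^\vee,\kappa\rangle$ from \Cref{rem:coeffs-anticanonical}. Write $\rho(D_1)=k\rho(D_2)$ for some $k>0$. The key step is to invoke the $\QQ$-Gorenstein assumption $\ell\kappa\in\M$, which legitimizes the pairing
\[
\ell\, m_{D_i}=\ell\langle\alpha_i^\vee,\kappa\rangle=\langle\rho(D_i),\ell\kappa\rangle,
\]
so that codirectionality on the $\N$-side transfers to a proportionality of the anticanonical coefficients: $\ell m_{D_1}=\langle k\rho(D_2),\ell\kappa\rangle=k\ell m_{D_2}$, giving $m_{D_1}=k m_{D_2}$ and hence $\rho(D_1)/m_{D_1}=\rho(D_2)/m_{D_2}$.

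The whole argument is short once the case analysis of \Cref{colors codirectional} is in place; the only genuine step is the type $b$ reduction, where the main obstacle is conceptual rather than technical: one must notice that $\kappa$ a priori lives in $\mathfrak{X}(B)$ rather than $\M$, so without the hypothesis $\ell\kappa\in\M$ the pairing $\langle \rho(D_i),\kappa\rangle$ is not well-defined via $\N=\Hom(\M,\ZZ)$. The $\QQ$-Gorenstein hypothesis is exactly what is needed to make this pairing meaningful and to couple codirectionality with proportionality of the $m_{D_i}$.
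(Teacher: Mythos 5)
Your proof is correct and follows essentially the same route as the paper: case analysis via \Cref{colors codirectional}, the type $a$ case handled by $m_{D_1}=m_{D_2}=1$, and the type $b$ case by pairing $\rho(D_i)$ against $\ell\kappa\in\M$ to get $\ell m_{D_1}=k\ell m_{D_2}$. Your additional remark making explicit why the hypothesis $\ell\kappa\in\M$ is needed for the pairing to be defined is a fair elaboration of the same argument.
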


\begin{proof}
    By \Cref{colors codirectional}, there are two cases in which \(\rho(D_1)\) and \(\rho(D_2)\) span the same ray.
    If these colors are both of type a, then \(m_{D_1}=m_{D_2}=1\), and thus, by \Cref{colors codirectional}, \(\rho(D_1)=\rho(D_2)\).
    If both these colors are of type b, and \(\rho(D_1)=k\rho(D_2)\) for \(k>0\), then \(\ell m_{D_1} = \langle \rho(D_1), \ell \kappa \rangle = \langle k \rho(D_2), \ell \kappa \rangle = k \ell m_{D_2}\).
    In particular, \(k = m_{D_1}/m_{D_2}\), from which the statement follows.
\end{proof}

\begin{prop}\noindent\phantomsection\label{prop-add-kappa}
    Given a Luna datum with \(\ell' \kappa \not\in \M\) for all \(\ell'\in\ZZ_{\ge0}\), define \(\M'\coloneq\M \oplus \ZZ\kappa \subseteq \mathfrak{X}(B)\).
	Applying both the following modifications yields a genuine Luna datum
    \begin{itemize}
    \item
        replace \(\M\) by \(\M'\), so that \(\N' = \Hom(\M', \ZZ) \cong \N \oplus \ZZ\),
    \item
        replace \(\rho \colon \D \to \N\) by \(\rho'\colon \D \to \N' = \N \oplus \ZZ; D \mapsto (\rho(D), m_D)\).
    \end{itemize}
\end{prop}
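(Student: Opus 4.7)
The approach is to verify the Luna datum axioms for the modified quadruple $(\M', \Sigma, S^p, \D^a)$ with maps $\rho'$ and $\varsigma$. Since $\Sigma$, $S^p$, $\D^a$, and $\varsigma$ are unchanged, every axiom not involving $\M$ or $\rho$ is inherited from the original datum, leaving only the axioms involving the new lattice and the new valuation map to verify. The hypothesis $\ell'\kappa\notin\M$ (for $\ell'>0$) is what ensures $\M\oplus\ZZ\kappa$ is a genuine internal direct sum inside $\mathfrak{X}(B)$.

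The identification $\M' = \M \oplus \ZZ\kappa$ yields $\N' \cong \N \oplus \ZZ$, where the second factor pairs with $\kappa$, and by construction $\rho'(D)|_\M = \rho(D)$. Therefore any axiom expressible as a pointwise condition on pairings against elements of $\M$ transfers immediately: in particular the inequalities $\langle \rho(D), \sigma\rangle \le 1$ for $\sigma \in \Sigma \subseteq \M$. Integrality, $\rho'(D) \in \N'$, follows from $m_D \in \ZZ_{\ge 1}$ by \Cref{rem:coeffs-anticanonical}.

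What remains is to check the equality-type axioms in $\N'$: the relation $\rho'(D_\alpha^+) + \rho'(D_\alpha^-) = \alpha^\vee|_{\M'}$ for $\alpha \in S \cap \Sigma$; the analogous relations for type-$2a$ and type-$b$ colors (e.g.~$\rho'(D) = \alpha^\vee|_{\M'}$ for $D$ of type $b$ moved by $\alpha$); and the integrality conditions such as $\langle \alpha^\vee, \M'\rangle \subseteq 2\ZZ$ when $2\alpha \in \Sigma$. The $\M$-component of each such equality is the corresponding original axiom; only the $\kappa$-component is new. The key computation is that $\langle \alpha^\vee, \kappa\rangle = 2$ whenever $\alpha \in S$ is orthogonal to $S^p$: from $\kappa = 2(\rho_S - \rho_{S^p})$ and $\langle \alpha^\vee, \rho_S\rangle = 1$, it suffices to observe $\langle \alpha^\vee, \rho_{S^p}\rangle = 0$, which holds since $\alpha \perp S^p$ implies $\alpha$ is orthogonal to the entire root subsystem $R_{S^p}$. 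The Luna axiom $S^p \perp \Sigma$ supplies exactly this orthogonality whenever $\alpha \in \Sigma$ or $2\alpha \in \Sigma$ (using that for two simple roots orthogonality is symmetric in the coroot). Combined with $m_D = 1$ for types $a$ and $2a$ and $m_D = \langle \alpha^\vee, \kappa\rangle$ for type $b$ from \Cref{rem:coeffs-anticanonical}, each $\kappa$-component check becomes immediate; for instance, the type-$a$ axiom reads $m_{D_\alpha^+} + m_{D_\alpha^-} = 1 + 1 = 2 = \langle \alpha^\vee, \kappa\rangle$.

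The main obstacle is the careful enumeration of all Luna axioms that involve $\rho$ or pairings with $\M$, ensuring none is missed. However, the uniform pattern---that each $\kappa$-component check reduces to Brion's formula $m_D = \tfrac{1}{2}\langle \alpha^\vee, \kappa\rangle$ (or its type-$b$ analogue)---was built into the definition with precisely this compatibility in mind, making the verifications parallel and transparent once correctly identified.
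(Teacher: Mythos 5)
Your overall strategy---axiom-by-axiom verification of the Luna datum, with each new \(\kappa\)-component check reducing to Brion's coefficient formula \(m_D=\tfrac12\langle\alpha^\vee,\kappa\rangle\) (or its type-\(b\) analogue)---is the same as the paper's, and your derivation of \(\langle\alpha^\vee,\kappa\rangle=2\) for \(\alpha\) moving a color of type \(a\) or \(2a\) is correct. However, your enumeration misses exactly the one axiom that does \emph{not} fit your ``uniform pattern'': axiom (S) requires \(\langle\alpha^\vee,\chi\rangle=0\) for every \(\alpha\in S^p\) and every \(\chi\) in the lattice, so for the modified datum one must additionally verify \(\langle\alpha^\vee,\kappa\rangle=0\) for all \(\alpha\in S^p\). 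No color is moved by such an \(\alpha\) (that is the definition of \(S^p\)), so Brion's formula gives no handle on this pairing. The paper proves it by noting that \(\rho_{S^p}\) is the sum of fundamental weights of the root system generated by \(S^p\), whence \(\langle\alpha^\vee,\rho_{S^p}\rangle=1=\langle\alpha^\vee,\rho_S\rangle\) and \(\langle\alpha^\vee,\kappa\rangle=2(1-1)=0\). Note this is the opposite situation to your key computation, where \(\langle\alpha^\vee,\rho_{S^p}\rangle=0\) because \(\alpha\) is orthogonal to \(S^p\); here \(\alpha\in S^p\) and the pairing with \(\rho_{S^p}\) is \(1\), not \(0\). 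Without this check the modified quadruple is not known to be a Luna datum, so this is a genuine (if easily repaired) gap.

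A more minor point: you do not explicitly mention axiom \((\Sigma 2)\), which for orthogonal simple roots \(\alpha,\beta\) with \(\alpha+\beta\in\Sigma\cup2\Sigma\) demands \(\alpha^\vee=\beta^\vee\) on \(\M'\), i.e.\ \(\langle\alpha^\vee,\kappa\rangle=\langle\beta^\vee,\kappa\rangle\). This one does follow your stated pattern (both pairings equal \(m_D\) for the type-\(b\) color \(D\) attached to \(\alpha+\beta\)), so I would count it as covered in spirit; only the \(S^p\) case above requires a genuinely different argument.
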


\begin{proof}
    We verify the axioms of a Luna datum, using the numbering of axioms given in~\cite[Definition~30.21]{Timashev}.

    Clearly \(\Sigma \subseteq \M \subseteq \M'\).
    Firstly, we require every spherical root \(\gamma\in\Sigma\) to be primitive in \(\M'\). Indeed as \(\kappa\notin\M\), we have \(\gamma=\gamma+0\kappa\in\M\oplus\ZZ\kappa=\M'\), so each spherical root remains primitive in \(\M'\).

	Notice that for all \(\gamma \in \Sigma\), we have \(\gamma \in \M \subseteq \M'\), and thus \(\langle \rho'(D), \gamma\rangle = \langle \rho(D), \gamma \rangle\) for each color \(D \in \D\) of type \(a\).
	Therefore, axiom (A1) for the modified Luna datum follows from the original Luna datum.

	Let \(\alpha \in \Sigma \cap S\) and let \(D_\alpha^+,D_\alpha^-\) be the pair of colors of type \(a\) that are moved by \(\alpha\).
	Then we have
	\[
		\rho'(D_\alpha^+) + \rho'(D_\alpha^-) = (\rho(D_\alpha^+),1) + (\rho(D_\alpha^-),1) = (\rho(D_\alpha^+)+\rho(D_\alpha^-),2) = ( \alpha^\vee|_\M, \langle\alpha^\vee, \kappa\rangle),
	\]
	where for the last equality we have used \Cref{rem:coeffs-anticanonical}, i.e.\ \(\langle \alpha^\vee, \kappa \rangle = 2\).
	This shows \(\rho'(D_\alpha^+)+\rho'(D_\alpha^-) = \alpha^\vee|_{\M'}\), and thus verifies axiom (A2) in~\cite[Definition~30.21]{Timashev}.

	Axiom~(A3) in~\cite[Definition~30.21]{Timashev} is trivially satisfied.

	Let \(\alpha \in S \cap \frac12\Sigma\).
	By \Cref{rem:coeffs-anticanonical}, we have that \(\langle \alpha^\vee,\kappa\rangle =2\), and thus \(\langle \alpha^\vee, \M'\rangle \subseteq 2\ZZ\).
	Since \(\Sigma \subseteq \M\), it follows from axiom (\(\Sigma1\)) for the original Luna datum that \(\langle \alpha^\vee, \Sigma \setminus \{2\alpha\}\rangle \le 0\).
	Therefore axiom (\(\Sigma1\)) holds for the modified Luna datum.

	Suppose \(\alpha, \beta \in S\), \(\alpha \perp \beta\), and \(\gamma \coloneqq \alpha+\beta \in \Sigma \cup 2\Sigma\).
	Then the spherical root \(\gamma\) comes with a color \(D \in \D\) whose coefficient in the anticanonical class is given by \(\langle \alpha^\vee, \kappa\rangle = \langle \beta^\vee, \kappa \rangle = m_D\).
	Hence, by axiom (\(\Sigma2\)) for the original Luna datum, it follows that \(\alpha^\vee = \beta^\vee\) on \(\M'\).

	For the final axiom (S), it remains to show \(\langle \alpha^\vee, \kappa \rangle = 0\) for each \(\alpha \in S^p\).
	This follows from the following general observation: let \(I \subseteq S\) be a subset and \(\rho_I\) the half sum of positive roots in the root system generated by \(I\). It is well known that \(\rho_I\) is equal to the sum of fundamental weights in the root system generated by \(I\).
	Thus, for each \(\alpha \in I\), we have \(\langle \alpha^\vee, \rho_I\rangle = 1\).
	Returning to our original question: recall \(\kappa = 2(\rho_S-\rho_{S^p})\).
	Then for each \(\alpha \in S^p\), \(\langle\alpha^\vee,\kappa\rangle=2(\langle\alpha^\vee,\rho_{S}\rangle-\langle\alpha^\vee,\rho_{S^p}\rangle)=0\), as desired.
\end{proof}

\begin{rem}\noindent\phantomsection\label{new-valuation-cone}
	Notice that the valuation cone \(\V'\) of the modified Luna datum in \Cref{prop-add-kappa} satisfies \(\V' = \V \oplus \QQ \subseteq \N'_\QQ = \N_\QQ \oplus \QQ\) where \(\V \subseteq \N_\QQ\) is the valuation cone of the original Luna datum.
\end{rem}

We now describe a class of colored fans corresponding to \(\QQ\)-Gorenstein spherical varieties (see \Cref{Q-Gorenstein fan}).
Then, we present an algorithm which converts a complete colored fan into a complete colored fan which is a member of this class.

\begin{rem}\noindent\phantomsection\label{Q-Gorenstein fan}
	Recall that a spherical variety is \(\QQ\)-Gorenstein if and only if each colored cone \((\mathcal{C},\mathcal{F})\) admits a linear function \(f_{(\mathcal{C},\mathcal{F})} \in \M_\QQ\) such that \(m_D = \langle \rho(D), f_{(\mathcal{C},\mathcal{F})}\rangle\) for each ray \(\QQ_{\ge0}\rho(D)\) of \(\mathcal{C}\) (with \(D \in \Delta\)), and \(m_{D'}=\langle \rho(D'), f_{(\mathcal{C},\mathcal{F})}\rangle\) for each \(D'\in\mathcal{F}\) (see, for instance, \cite[first Proposition in Section~3.1]{BrionBasepointFree}).

    In particular, consider a colored fan where every colored cone \((\mathcal{C},\mathcal{F})\) satisfies:
    \begin{enumerate}
    \item
        \(\mathcal{C}\) is a simplicial cone, i.e.\ the rays of \(\mathcal{C}\) are generated by \(\QQ\)-linearly independent vectors;
    \item
		for every color \(D\in\mathcal{F}\), \(\QQ_{\ge0}\rho(D)\) is a ray of \(\mathcal{C}\);
    \item
		for every pair of colors \(D_1,D_2\in \mathcal{F}\) with the same ray \(\QQ_{\geq0}\rho(D_1)=\QQ_{\geq0}\rho(D_2)\), demand that \(\rho(D_1)/m_{D_1}=\rho(D_2)/m_{D_2}\).
    \end{enumerate}
    Remark that the spherical variety of such a colored fan is \(\QQ\)-Gorenstein.
\end{rem}

\begin{rem}
    Recall from \Cref{sec:background} that each colored cone \((\mathcal{C}, \mathcal{F})\) in a colored fan consists of a strictly convex rational polyhedral cone \(\mathcal{C}\subseteq \N_\QQ\) and a subset \(\mathcal{F} \subseteq \D\) such that
    \[
		\mathcal{C}=\cone(\{v_1,\dots,v_n\} \cup \rho(\mathcal{F}))\subseteq\N_\QQ
    \]
	for some \(v_1, \dots, v_n\in\V\cap\N\) such that \(\rho(D)\neq0\) for \(D \in \mathcal{F}\), and the relative interior of \(\mathcal{C}\) intersects the valuation cone, i.e.\ \({\mathcal{C}}^\circ \cap \V \neq \emptyset\).
    If \((\mathcal{C}, \mathcal{F})\) satisfies these properties, but not necessarily the intersection property with the valuation cone, we call it an \emph{abstract colored cone}. We define \emph{abstract colored fans} as collections of abstract colored cones such that each abstract colored face of an abstract colored cone in the fan is also in the fan, and every point in the valuation cone lies in the relative interior of at most one abstract colored cone of the fan.
	Clearly, by removing those abstract colored cones from an abstract colored fan that are not genuine colored cones, one obtains a genuine colored fan.
	Furthermore, it is easy to show that when intersected with the valuation cone, the supports of both the abstract colored fan and its corresponding ``genuine'' colored fan coincide.

\end{rem}

\begin{prop}\noindent\phantomsection\label{reduction to gorenstein}
    For every complete spherical variety \(X\hookleftarrow G/H\), there exists a complete \(\QQ\)-Gorenstein spherical variety \(X'\hookleftarrow G/H'\) such that \(\widetilde{\wp}(X')=\widetilde{\wp}(X)\).

    Indeed, the Luna datum and colored fan of such an \(X'\) can be taken to be the output of \Cref{Gorenstein algorithm}.
\end{prop}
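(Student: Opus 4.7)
The plan is to verify that \Cref{Gorenstein algorithm} outputs a complete $\QQ$-Gorenstein spherical variety $X' \hookleftarrow G/H'$ with $\widetilde{\wp}(X') = \widetilde{\wp}(X)$, by analyzing the algorithm as two conceptually separate operations: (I) a Luna-datum extension via \Cref{prop-add-kappa}, which secures the hypothesis of \Cref{Gorenstein Luna datum} and hence condition~(3) of \Cref{Q-Gorenstein fan}; and (II) a subdivision of the colored fan enforcing the remaining conditions~(1) and (2). Each operation must preserve both completeness and $\widetilde{\wp}$.

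For step~(I), if no positive multiple of $\kappa$ lies in $\M$, apply \Cref{prop-add-kappa} to enlarge $\M$ to $\M' = \M \oplus \ZZ \kappa$ and set $\rho'(D) = (\rho(D), m_D)$. The colored fan lifts by sending each $G$-invariant ray $\QQ_{\geq 0} v$ to $\QQ_{\geq 0}(v, 0) \subseteq \V' = \V \oplus \QQ$ (\Cref{new-valuation-cone}), with additional $G$-invariant rays in the $\pm \kappa$-direction appended to restore completeness. Preservation of $\widetilde{\wp}$ follows from the fact that $\V'$ contains a full vertical line, so $\T' = \T \oplus \{0\}$: every $\vartheta \in Q'^* \cap \T'$ has zero $\kappa$-component, making $\langle \rho'(D), \vartheta\rangle = \langle \rho(D), \vartheta\rangle$ and reducing $Q'^*$ to $Q^*$. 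The added vertical $G$-invariant rays pair to zero with such $\vartheta$, contributing nothing to the sum. Combined with $\dim X' - \rk X' = \dim X - \rk X$ (since $S^p$ is unchanged and both $\dim$ and $\rk$ increase by one), this yields $\widetilde{\wp}(X') = \widetilde{\wp}(X)$.

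For step~(II), with \Cref{Gorenstein Luna datum} now in force, subdivide each non-simplicial colored cone by inserting rays in its relative interior to obtain simplicial subcones with colors lying on rays, giving all conditions of \Cref{Q-Gorenstein fan}. Each newly introduced $G$-invariant divisor $E$ has $m_E = 1$ and contributes $\langle \rho(E), \vartheta\rangle$ to the sum in the supremum. Preservation of $\widetilde{\wp}$ rests on choosing subdivision rays so that these new contributions vanish at an optimizer of the original sup; equivalently, one may work with $\widetilde{\wp}(\mathscr{R}_X)$ directly and exploit that the spherical skeleton only ``sees'' restrictions $\rho(E)|_\Lambda \in \Lambda^*$, arranging subdivisions whose rays vanish under this restriction.

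The main obstacle is the step~(II) preservation argument: one must rigorously justify that the subdivision can always be performed with rays that either lie in the kernel of restriction to $\Lambda^*$ or pair to zero with an optimizer of the supremum. The coarseness of the spherical skeleton—precisely the feature that $\widetilde{\wp}$ depends on—makes this plausible, but the combinatorial bookkeeping across all configurations of colored cones demands care. Assuming this preservation is established, the output $X'$ is complete, $\QQ$-Gorenstein by \Cref{Q-Gorenstein fan}, and satisfies $\widetilde{\wp}(X') = \widetilde{\wp}(X)$, completing the proof.
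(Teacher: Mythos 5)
Your step~(I) matches the paper's Steps~\eqref{first-step}--\eqref{make-fan-complete} in substance (the $\kappa$-extension of the Luna datum, the observation that $\T'=\T\times\{0\}$ so the optimisation region and objective are unchanged, and that the completing rays $\rho'(Y^\pm)=(0,\pm1)$ contribute nothing), modulo a small deviation: the paper lifts $G$-invariant rays to height $1$, not $0$, which is what makes the upper/lower-face construction of the completion in \Cref{sec:define-F3} go through. But your step~(II) contains a genuine gap, and you correctly identify it yourself: you propose to simplicialise by ``inserting rays in the relative interior'' of non-simplicial cones, which creates new $G$-invariant divisors $E$, each adding a term $\langle\rho(E),\vartheta\rangle$ to the objective and a constraint $\langle\rho(E),v\rangle\ge-1$ to $Q^*$. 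You then say preservation of $\widetilde{\wp}$ ``rests on choosing subdivision rays so that these new contributions vanish at an optimizer,'' and you assume this rather than prove it. There is no reason a ray in the relative interior of a full-dimensional cone should restrict to zero on $\Lambda$ or pair to zero with an optimiser, so as stated the key step of your argument is unproven.

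The paper's algorithm is designed precisely to avoid this obstacle, and this is the idea your proposal is missing: \emph{no new rays beyond existing colors are ever introduced}. Step~\eqref{item:make-colors-rays} performs colored star subdivisions only at the vectors $\rho'(D)$ for colors $D\in J$ that are used in some coloring without spanning a ray; since such a $D$ is already a $B$-invariant prime divisor in $\Delta'$, its term $m_D-1+\langle\rho'(D),\vartheta\rangle$ is already present in the sum, and adding the ray changes only the cone structure, not the data $(\Delta',\rho',m)$ entering $\widetilde{\wp}$. Step~\eqref{item:final-step} then triangulates \emph{without adding any new rays} (always possible for a polyhedral cone using its existing ray set), and condition~(3) of \Cref{Q-Gorenstein fan} is supplied by \Cref{Gorenstein Luna datum}, which is available because Step~\eqref{first-step} arranged $\ell\kappa\in\M'$. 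Hence $\widetilde{\wp}$ depends only on the $B$-invariant divisors and is manifestly unchanged by Steps~\eqref{item:make-colors-rays} and~\eqref{item:final-step} --- no optimiser-dependent choice of subdivision rays is needed. To repair your proof you should replace the ``insert interior rays'' scheme with star subdivisions at the colors followed by a ray-preserving triangulation; as written, the central claim of step~(II) is a conjecture, not an argument.
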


\begin{algorithm}\noindent\phantomsection\label{Gorenstein algorithm}
    The input of the algorithm is the colored fan \(\mathbb{F}\) and Luna datum of the complete spherical variety \(X\hookleftarrow G/H\).
    
    \begin{enumerate}
    \item\noindent\phantomsection\label{first-step}
    Firstly, if there is an \(\ell>0\) such that \(\ell\kappa\in\M\), write \(\mathbb{F}_3\coloneq\mathbb{F}\) and go to Step~\eqref{item:make-colors-rays}.
    
        Otherwise, replace the Luna datum of \(G/H\) with the new Luna datum specified in \Cref{prop-add-kappa}.
        Let the resulting spherical homogeneous space be \(G/H'\), and signify its associated spherical combinatorial data by adding an apostrophe to the corresponding notation for \(G/H\).
        Next, go to Step~\eqref{item:add-kappa}. 
    
    \item\noindent\phantomsection\label{item:add-kappa}
        Notice that by \Cref{new-valuation-cone}, the new Luna datum has \(\N'=\N\oplus\ZZ\), and colors \(\D' = \D\).
        Using this Luna datum, we define an embedding \(G/H' \hookrightarrow X'\) combinatorially via a colored fan \(\mathbb{F}'\), whose construction begins here as the colored fan \(\mathbb{F}_2\), and is completed in Step~\eqref{item:final-step} as \(\mathbb{F}'\coloneq\mathbb{F}_5\).
        Firstly, we describe the dual vectors \(\rho'(D) \in \N'\) for \(D\in\Delta\).
		The map \(\rho' \colon \Delta \to \N'\) satisfies \(\rho'(D)=(\rho(D),m_D)\in \N'\) for colors \(D \in \D'=\D\), and we set \(\rho'(X_i) \coloneqq (\rho(X_i),1)\) for \(G\)-invariant divisors \(X_i \in \Delta\).
		The colored cones of \(\mathbb{F}_2\) are as follows.
		Let \({(D_i)}_{i\in I}\) with \(D_i\in\Delta\) be the family of \(B\)-invariant divisors corresponding to the rays and colors of a cone \((\mathcal{C},\mathcal{F})\) in \(\mathbb{F}\).
		Then the colored cones of \(\mathbb{F}_2\) are:
        \[
            (\mathcal{C}', \mathcal{F}') \coloneqq (\cone(\{\rho'(D_i) \mid i \in I\}), \mathcal{F}) \subseteq \N'_\QQ = \N_\QQ \oplus \QQ,
        \]
		together with the colored faces of such cones.
		In \Cref{F2-colored-fan}, we prove that \(\mathbb{F}_2\) is a genuine colored fan.
		
    \item\noindent\phantomsection\label{make-fan-complete}
		Next, we introduce two new rays \(\mathbb{Q}_{\ge0}\rho'(Y^\pm)\coloneq\QQ_{\ge0}((0,\dots,0),\pm1)\subseteq \N'_{\QQ}\) corresponding to two \(G\)-invariant divisors \(Y^+,Y^- \subseteq X'\).
		Set \(\Delta' \coloneqq \Delta \sqcup \{ Y^+, Y^-\}\).
        In \Cref{sec:define-F3}, we construct a complete colored fan \(\mathbb{F}_3\) extending \(\mathbb{F}_2\) and incorporating \(\QQ_{\ge0}\rho'(Y^\pm)\).
        Roughly, the colored fan \(\mathbb{F}_3\) is constructed by introducing a new colored cone \((\mathcal{C}+\QQ_{\ge0}\rho(Y^{+}),\mathcal{F})\) for each \((\mathcal{C},\mathcal{F})\in\mathbb{F}_2\) which is ``visible'' from \(((0,\dots,0),+\infty)\).
        Similarly, we introduce a new colored cone \((\mathcal{C}+\QQ_{\ge0}\rho(Y^{-}),\mathcal{F})\) for each \((\mathcal{C},\mathcal{F})\in\mathbb{F}_2\) which is ``visible'' from \(((0,\dots,0),-\infty)\).
        Further details can be found in \Cref{sec:define-F3}.

        Now replace \(\mathbb{F}_2\) with \(\mathbb{F}_3\).
 
	\item\noindent\phantomsection\label{item:make-colors-rays}
		Next, we modify the colored fan \(\mathbb{F}_3\) so that every color \(D\) which is used in the coloring \(D\in\mathcal{F}\) of a colored cone \((\mathcal{C},\mathcal{F})\), spans a ray of this cone \(\mathcal{C}\).
		Let \(J\subseteq\D'\) be the set of colors for which there exists a colored cone \((\mathcal{C},\mathcal{F})\) of \(\mathbb{F}_3\) such that \(D\in\mathcal{F}\) and \(\QQ_{\geq0}\rho'(D)\) is not a ray of \(\mathcal{C}\).
		In \Cref{sec:colored-stellar-subdiv}, we extend the notion of star subdivisions (or stellar subdivisions) to colored cones.
		With this tool at hand, we now perform the following modifications.
		For every \(D\in J\) add a new ray spanned by \(\rho'(D)\) and perform a star subdivision of \(\mathbb{F}_3\) in the direction \(\QQ_{\ge0}\rho'(D)\).
		This replaces \(\mathbb{F}_3\) with a colored fan where some colored cones are subdivided such that \(\QQ_{\ge0}\rho'(D)\) is a ray of the new subdivided cones.
		Replace \(\mathbb{F}_3\) by the resulting colored fan, and continue this process for the remaining \(D\in J\).
		We denote the final colored fan by \(\mathbb{F}_4\).

    \item\noindent\phantomsection\label{item:final-step}
        Take the colored fan \(\mathbb{F}_4\) from Step~\eqref{item:make-colors-rays} and triangulate every colored cone in \(\mathbb{F}_4\) without adding any new rays.
        Finally, throw away each abstract colored cone in this new colored fan which is not a genuine colored cone.
        The result is a colored fan \(\mathbb{F}_5\) that is complete. 
    \end{enumerate}
    The output of this algorithm is the colored fan \(\mathbb{F}'\coloneq\mathbb{F}_5\) of a spherical variety \(X'\hookleftarrow G/H'\).
\end{algorithm}

In \Cref{sec:proof-of-gorenstein-algorithm}, we prove the correctness of \Cref{Gorenstein algorithm}.

\begin{proof}[Proof of \Cref{reduction to gorenstein}]
	With \Cref{prop:Gorenstein algorithm}, it suffices to show that the spherical variety \(X'\) whose colored fan is the output of \Cref{Gorenstein algorithm} satisfies \(\widetilde{\wp}(X)=\widetilde{\wp}(X')\).
	Firstly, notice that \(\dim{X} - \rk{X}\) is unchanged by Step~\eqref{first-step}:
	\begin{align*}
		\dim{G/H'}-\rk{G/H'} &= (\rk{\M'}+|R^+| - |R_{S^p}^+|) - \rk{\M'}\\
		&= |R^+| - |R_{S^p}^+| = \dim{G/H} - \rk{G/H},
	\end{align*}
	where \(R_{S^p}\) denotes the sub root system of \(R\) spanned by \(S^p\).
    
	Remark that no further steps of \Cref{Gorenstein algorithm} change \(\dim{X} - \rk{X}\).
	Next, let us confirm that Step~\eqref{item:add-kappa} preserves the linear program in \(\widetilde{\wp}(X)\).
	We begin by observing that in the Luna datum of \(G/H'\) only \(\M\) and \(\rho:\Delta\to\N\) change.
	Furthermore, there is a one-to-one correspondence from the rays of \(\mathbb{F}\) and \(\mathbb{F}_2\).
	Hence, there is natural identification of the set of \(B\)-invariant divisors \(\Delta\) in \(X\) and the ones in the spherical embedding of \(G/H'\) given by the colored fan \(\mathbb{F}_2\).
	Let us denote the latter embedding by \(X_2\) and write \(\widetilde{D}\) for the \(B\)-invariant divisor in \(X_2\) corresponding to \(D \in \Delta\).
	With the above observation we have
	\[
		-K_{X_2} = \sum_{D \in \Delta} m_D \widetilde{D}.
	\]
	That is, the coefficients in the anticanonical class do not change.
	Therefore, the polyhedron used in the computation of \(\widetilde{\wp}(X_2)\) is given as follows
	\[
		{Q_2}^* = \bigcap_{D \in \Delta} \{ v \in \M_\QQ \oplus \QQ \mid \langle \rho'(\widetilde{D}), v \rangle \ge -m_D\}.
	\]
	The tailcone of \(G/H'\) is \(\T' = \cone(\Sigma') = \cone(\Sigma) = \T \times \{0\} \subseteq \M_\QQ \oplus \QQ\), which is contained in the first factor of \(\M'_\QQ = \M_\QQ \oplus \QQ\).
	Therefore, the new region of optimisation in \(\widetilde{\wp}(X_2)\) is
	\[
		Q_2^*\cap\T'=(Q^* \cap \T) \times \{0\}.
	\]
	The new objective function is \(\sum_{D \in \Delta'}\rho'(\widetilde{D})\).
	Let \(\pi \colon \N_\QQ' = \N_\QQ \oplus \QQ \to \N_\QQ\) be the projection onto the first factor.
	It is straightforward to verify that \(\sum_{D\in{\Delta}}\rho'(\widetilde{D})\) and \(\pi^*(\sum_{D\in\Delta}\rho(D))\) coincide on \(\M_\QQ \subseteq \M'_\QQ = \M_\QQ\oplus \QQ\), and thus these two linear forms are equal on \(Q_2^* \cap \T'=(Q^*\cap \T) \times \{0\}\).
	From this it immediately follows that the linear program in \(\widetilde{\wp}(X)\) is unchanged by Step~\eqref{item:add-kappa}, and \(\widetilde{\wp}(X)\) is preserved, i.e.\ \(\widetilde{\wp}(X) = \widetilde{\wp}(X_2)\).

	In Step~\eqref{make-fan-complete} the new rays \(\QQ_{\ge0}\rho(Y^\pm)\) lead to the set of \(B\)-invariant divisors \(\Delta' = \Delta \sqcup \{ Y^+, Y^-\}\) in the spherical embedding \(G/H' \hookrightarrow X_3\) given by the colored fan \(\mathbb{F}_3\).
	Then, the anticanonical class of \(X_3\) is given by 
	\[
		-K_{X_3} = \sum_{D \in \Delta} m_D \overline{D} + Y^+ + Y^-
	\]
	where we have written a bar over each \(B\)-invariant divisor in \(X_3\) that corresponds to a \(B\)-invariant divisor in \(X_2\).
	For simplicity, we slightly abuse notation and simply write \(D\) (\(D \in \Delta'\)) for each \(B\)-invariant prime divisor \(D\) in \(X_3\).
	Notice that the divisors \(Y^\pm\) are \(G\)-invariant, and thus they satisfy \(m_{Y^\pm}=1\).
	In particular, the sum \(\sum_{D\in\Delta'}(m_D-1)\) appearing in the computation of \(\widetilde{\wp}(X_3)\) coincides with the sum \(\sum_{D\in\Delta}(m_D-1)\) appearing in the computation of \(\widetilde{\wp}(X)\) (resp.~\(\widetilde{\wp}(X_2)\)).
	The polyhedron used in \(\widetilde{\wp}(X_3)\) is given by
	\[
		Q_3^* = Q_2^* \cap \{ v \in \M_\QQ \oplus \QQ \mid \langle \rho'(Y^+), v\rangle \ge -1\} \cap \{ v \in \M_\QQ \oplus \QQ \mid \langle \rho'(Y^-), v\rangle \ge -1\},
	\]
	and thus \(Q_3^* \cap \T' = Q_2^*\cap \T' = (Q^*\cap \T) \times \{0\}\).
	Furthermore, the linear forms \(\rho'(Y^\pm)\) vanish on \(\M_\QQ \subseteq \M'_\QQ = \M_\QQ \oplus \QQ\), and thus adding them does not change the objective function in \(\widetilde{\wp}(X_3)\) within the optimisation region, i.e.\
	\[
		\sum_{D\in\Delta'}\rho'(D)|_{(Q^*\cap\T) \times \{0\}} = \sum_{D \in \Delta} \rho'(D)|_{(Q^*\cap \T)\times\{0\}} = \left.\pi^*\left(\sum_{D \in \Delta}\rho(D)\right)\right|_{(Q^*\cap\T) \times\{0\}}.
	\]
	Therefore, the linear program in \(\widetilde{\wp}(X)\) is preserved when the rays \(\rho(Y^\pm)\) are added.
	Hence, \(\widetilde{\wp}(X)\) is preserved by Step~\eqref{make-fan-complete}.

    It is apparent from its definition that \(\widetilde{\wp}\) only depends on the \(B\)-invariant divisors of a spherical embedding, and not on the higher dimensional modifications which are made to the colored fan \(\mathbb{F}_3\) in Steps~\eqref{item:make-colors-rays}~and~\eqref{item:final-step}.
    Therefore, \(\widetilde{\wp}(X)\) is preserved by \Cref{Gorenstein algorithm}, and we have \(\widetilde{\wp}(X')=\widetilde{\wp}(X)\).
\end{proof}

\begin{prop}\noindent\phantomsection\label{complete P theorem} Let \(X\) be a complete spherical variety, then 
	\[
		\widetilde{\wp}(X)\ge0.
	\]
\end{prop}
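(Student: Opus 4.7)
The plan is to combine the two main inputs that have already been developed in this section. Proposition~\ref{reduction to gorenstein} produces, from the given complete spherical variety \(X\), a complete \(\QQ\)-Gorenstein spherical variety \(X'\) satisfying \(\widetilde{\wp}(X')=\widetilde{\wp}(X)\) (via \Cref{Gorenstein algorithm}). Once this reduction is in hand, the inequality follows by directly invoking \Cref{Spherical Mukai}, which is the non-trivial analytic/combinatorial input proven in \cite{GHPsphericalMukai}.

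Concretely, the proof I would write consists of essentially one line: apply \Cref{reduction to gorenstein} to obtain a complete \(\QQ\)-Gorenstein spherical \(X'\) with \(\widetilde{\wp}(X)=\widetilde{\wp}(X')\); then \Cref{Spherical Mukai} yields \(\widetilde{\wp}(X')\ge 0\), and so \(\widetilde{\wp}(X)\ge 0\).

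There is no real obstacle at this step, since the work has already been packaged by \Cref{reduction to gorenstein}. The substantive content (namely the verification that \Cref{Gorenstein algorithm} genuinely outputs a complete \(\QQ\)-Gorenstein colored fan, and the preservation of \(\widetilde{\wp}\) under each of Steps~\eqref{first-step}--\eqref{item:final-step}) has been handled before the statement of \Cref{complete P theorem}; the deeper \(\QQ\)-Gorenstein inequality lives inside \cite{GHPsphericalMukai}. Thus the only task here is to assemble these pieces.
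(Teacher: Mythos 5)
Your proposal is correct and matches the paper's own proof exactly: the paper likewise combines \Cref{reduction to gorenstein} with \Cref{Spherical Mukai} in a one-line argument to conclude \(\widetilde{\wp}(X)=\widetilde{\wp}(X')\ge 0\).
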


\begin{proof}
    Let \(X\) be a complete spherical variety.
	Then, by \Cref{reduction to gorenstein} and \Cref{Spherical Mukai}, there is a complete \(\QQ\)-Gorenstein spherical variety \(X'\) such that 
    \[
        \widetilde{\wp}(X)=\widetilde{\wp}(X')\ge 0.\qedhere
    \]
\end{proof}

\section{Proof of Algorithm~\ref{Gorenstein algorithm}}\noindent\phantomsection\label{sec:proof-of-gorenstein-algorithm}
In this section, we prove the correctness of \Cref{Gorenstein algorithm}.
Firstly, we study the ``colored analogue'' of star subdivisions of fans from toric geometry to complete Step~\eqref{item:make-colors-rays} of \Cref{Gorenstein algorithm}.
The section is concluded with the proof of \Cref{Gorenstein algorithm}.

\subsection{A colored analogue of star subdivisions}\noindent\phantomsection\label{sec:colored-stellar-subdiv} Before proving the correctness of \Cref{Gorenstein algorithm}, we want to give a brief overview of star subdivisions of colored fans.
There are many equivalent ways to define the star subdivision of a (colored) fan.
One way to introduce these modifications of fans is via the so-called ``star'' and ``link'' of a ray or cone contained in the support of a fan.
However, for our setting the following more straightforward (yet less systematic) approach seems more useful.

\begin{defin}\noindent\phantomsection\label{def:star-subdiv}
	Let \(\mathbb{F}\) be a colored fan.
	Denote by \(\mathbb{F}^a\) the corresponding abstract colored fan, obtained by extending \(\mathbb{F}\) with abstract colored faces of colored cones in \(\mathbb{F}\).
	Notice that \(\mathbb{F}^a\) still satisfies the conditions of a colored fan, except the relative interiors of abstract colored cones might not intersect the valuation cone.

	Following~\cite[Section~11]{ToricBook}, we introduce for a given color \(D \in \D\) with \(\rho(D) \in |\mathbb{F}| = |\mathbb{F}^a|\), the collection \(\mathbb{F}^a(D)\) consisting of the following cones:
	\begin{itemize}
	\item
		\((\mathcal{C},\mathcal{F})\), where \((\mathcal{C}, \mathcal{F}) \in \mathbb{F}^a\) with \(\rho(D) \not \in \mathcal{C}\).
	\item
		\((\cone(\mathcal{C}',\rho(D)), \mathcal{F}'\cup \{D\})\), where \((\mathcal{C}',\mathcal{F}')\in \mathbb{F}^a\) with \(\rho(D) \not \in \mathcal{C}'\) and \(\{\rho(D)\} \cup \mathcal{C}' \subseteq \mathcal{C}\) for some \((\mathcal{C}, \mathcal{F}) \in \mathbb{F}^a\).
	\end{itemize}
	The set \(\mathbb{F}^*(D)\) of genuine colored cones in \(\mathbb{F}^a(D)\) is called the \emph{star subdivision} of \(\mathbb{F}\) at \(D\).
\end{defin}

\begin{rem}\noindent\phantomsection\label{rem:why-include-abstr-faces}
	Notice that in \Cref{def:star-subdiv} we need to include abstract colored faces.
	For example, consider the following setup: \(\V \coloneqq \{ x+y \le 0\} \subseteq \QQ^2\), \(\D = \{ D_1, D_2, D_3\}\), and
	\[
		\rho\colon \D \to \QQ^2; \quad\begin{cases}
			\rho(D_1)=(1,0)\\
			\rho(D_2)=(0,1)\\
			\rho(D_3)=(-2,1).
		\end{cases}
	\]
	We remark that this ``colored data'' does not arise as a genuine Luna datum of a spherical variety.
	However, in higher dimensions, a phenomenon we now describe may occur.
	For illustration purposes, we focus on \(2\) dimensions.

	Consider the colored fan \(\mathbb{F}\) that consists of all (genuine) colored faces of the colored cone \((\mathcal{C},\mathcal{F}) \coloneqq (\cone((-3,1),(1,0)), \{D_1,D_2,D_3\})\) (see \Cref{fig:why-include-abstr-faces}).
	\begin{figure}[!ht]
		\begin{tikzpicture}
			\clip (-2.5,-0.5) rectangle (1.5,1.5);
			\fill[fill=gray,opacity=.8] (0,0) -- (-6,2) -- (-6,2.5) -- (1.5,2.5) -- (1.5,0) -- cycle;
			\fill[opacity=.2,fill=gray] (-2.5,2.5) -- (2.5,-2.5) -- (-2.5,-2.5) -- cycle;
			
			\draw[thick] (0,0) -- (-6,2);
			\draw[thick] (0,0) -- (1.5,0);

			\draw[-latex] (-2.5,0) -- (1.5,0);
			\draw[-latex] (0,-.5) -- (0,1.5);

			\node at (-2.5,-0.5) [above right] {\(\V\)};
			\draw[very thin] (-2.5,2.5) -- (2.5,-2.5);

			\draw (1,0) circle (2pt) node[below] {\tiny\(\rho(D_1)\)};
			\draw (0,1) circle (2pt) node[right] {\tiny\(\rho(D_2)\)};
			\draw (-2,1) circle (2pt) node[above] {\tiny\(\rho(D_3)\)};
		\end{tikzpicture}
		\caption{Illustration of \Cref{rem:why-include-abstr-faces}.\label{fig:why-include-abstr-faces}}
	\end{figure}
	That is,
	\[
		\mathbb{F} = \{(\mathcal{C},\mathcal{F}), (\cone((-3,1)),\{\}), (\{0\},\{\})\}.
	\]
	However, \(\mathbb{F}^a\) also includes the abstract colored face \((\cone((1,0)), \{D_1\})\).

	It is straightforward to verify that \(\mathbb{F}^*(D_3)\) has two maximal cones, namely
	\begin{align*}
		(\mathcal{C}_1, \mathcal{F}_1) &= (\cone((-3,1),\rho(D_3)), \{ D_3 \}), \quad \text{and}\\
		(\mathcal{C}_2, \mathcal{F}_2) &= (\cone(\rho(D_1),\rho(D_3)), \{D_1,D_3\}).
	\end{align*}
	The latter cone arises as the cone generated by the abstract face \((\cone(\rho(D_1)), \{D_1\})\).
	If we did not allow abstract faces this cone would not appear, and thus the support of \(\mathbb{F}\) and \(\mathbb{F}^*(D)\) would differ.
	This doesn't happen with our definition as we show in \Cref{prop:star-subdiv}.
\end{rem}

\begin{prop}\noindent\phantomsection\label{prop:star-subdiv}
	Let \(\mathbb{F}\) be a colored fan and let \(D \in \D\) with \(\rho(D) \in |\mathbb{F}|\).
	Then \(\mathbb{F}^*(D)\) is a (genuine) colored fan with \(|\mathbb{F}|\cap \V = |\mathbb{F}^*(D)| \cap \V\).
\end{prop}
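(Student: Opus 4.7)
The plan is to reduce to the classical toric star subdivision by working first at the level of underlying uncolored cones, and then tracking the coloring data separately. Let \(\Sigma\) denote the underlying (uncolored) fan of \(\mathbb{F}^a\), i.e.\ the collection of cones \(\mathcal{C}\) such that \((\mathcal{C}, \mathcal{F}) \in \mathbb{F}^a\) for some \(\mathcal{F}\). Ignoring colorings, the cones appearing in \(\mathbb{F}^a(D)\) are precisely those of the classical toric star subdivision \(\Sigma^*\) of \(\Sigma\) at the ray \(\QQ_{\ge 0}\rho(D)\), as in \cite[Section~11]{ToricBook}. Since \(\rho(D) \in |\mathbb{F}| = |\Sigma|\), the classical toric result gives that \(\Sigma^*\) is a genuine fan with \(|\Sigma^*| = |\Sigma|\).

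Next, I would verify that \(\mathbb{F}^a(D)\), equipped with the colorings specified in \Cref{def:star-subdiv}, is itself an abstract colored fan. The key checks are: (i) each new cone \((\cone(\mathcal{C}',\rho(D)),\, \mathcal{F}' \cup \{D\})\) is a valid abstract colored cone, with strict convexity inherited from the containing cone \(\mathcal{C} \supseteq \cone(\mathcal{C}',\rho(D))\) of \(\mathbb{F}^a\), and with the generating set condition inherited from \((\mathcal{C}',\mathcal{F}')\) upon adjoining the new ray generator \(\rho(D)\) to the color set; (ii) closure under abstract colored faces, namely that a face \(\tau\) of \(\cone(\mathcal{C}',\rho(D))\) either contains \(\rho(D)\), in which case \(\tau = \cone(\tau',\rho(D))\) for some face \(\tau'\) of \(\mathcal{C}'\) and the resulting colored cone lies in the second class of \(\mathbb{F}^a(D)\), or \(\tau\) misses \(\rho(D)\), in which case \(\tau\) is an abstract colored face of \((\mathcal{C}',\mathcal{F}')\) and lies in the first class; (iii) no two abstract colored cones of \(\mathbb{F}^a(D)\) share an interior point, which follows from the analogous property of \(\Sigma^*\).

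Finally, I would obtain the genuine colored fan \(\mathbb{F}^*(D)\) from \(\mathbb{F}^a(D)\) by discarding those abstract colored cones whose relative interior fails to meet \(\V\). By the general observation stated just before the proposition—that this discarding preserves support after intersection with \(\V\)—it will follow that
\[
|\mathbb{F}^*(D)| \cap \V = |\mathbb{F}^a(D)| \cap \V = |\Sigma^*| \cap \V = |\Sigma| \cap \V = |\mathbb{F}| \cap \V,
\]
completing the proof.

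The main obstacle I anticipate is the careful bookkeeping of which abstract (non-genuine) colored faces must be included in \(\mathbb{F}^a(D)\), as illustrated by \Cref{rem:why-include-abstr-faces}: omitting them would leave \(\mathbb{F}^a(D)\) failing to cover \(|\mathbb{F}|\), and in particular the new cones arising from purely abstract faces of a genuine colored cone could be missed. Once the inclusion of abstract faces is correctly handled, the remaining content reduces to the standard toric star subdivision and the general fact about abstract versus genuine colored fans agreeing on \(\V\).
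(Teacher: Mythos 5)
Your overall architecture (reduce to the classical toric star subdivision, check face-closure and the coloring bookkeeping, then pass from the abstract to the genuine colored fan and compare supports on \(\V\)) is the right one, but the first step contains a genuine gap. You take \(\Sigma\) to be the collection of underlying uncolored cones of \(\mathbb{F}^a\) and invoke the classical result that the star subdivision \(\Sigma^*\) of the fan \(\Sigma\) at \(\QQ_{\ge0}\rho(D)\) is again a fan with the same support. However, \(\Sigma\) is in general \emph{not} a fan in the toric sense: the defining separation condition for a colored fan only requires that each point \emph{of the valuation cone} lies in the relative interior of at most one colored cone, so distinct cones of \(\mathbb{F}\) may overlap in their relative interiors outside \(\V\). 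Consequently the classical theorem does not apply to \(\Sigma\), and your step (iii) — that no two cones of \(\mathbb{F}^a(D)\) share a relative interior point, "by the analogous property of \(\Sigma^*\)" — is both unavailable by that route and stronger than what is true (the correct statement is only that they do not share a relative interior point lying in \(\V\)). The same issue undermines the link \(|\Sigma^*|=|\Sigma|\) in your final chain of equalities.

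The repair is to localise: for a single colored cone \((\mathcal{C}',\mathcal{F}')\) of \(\mathbb{F}\), its abstract colored faces \emph{do} form a genuine fan in the toric sense, and the classical star subdivision result applies to that fan, giving that every \(\nu\in\mathcal{C}'\) lies in the relative interior of exactly one cone of the subdivision of \(\mathcal{C}'\). One then patches these local statements together using the separation property of \(\mathbb{F}\) on \(\V\): given \(\nu\in\mathcal{C}_1^\circ\cap\mathcal{C}_2^\circ\cap\V\) for \(\mathcal{C}_1,\mathcal{C}_2\in\mathbb{F}^*(D)\), one locates the unique \((\mathcal{C}',\mathcal{F}')\in\mathbb{F}\) with \(\nu\in(\mathcal{C}')^\circ\), shows both \(\mathcal{C}_i\) belong to the star subdivision of that single cone, and concludes \(\mathcal{C}_1=\mathcal{C}_2\) from the local fan property. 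The support equality \(|\mathbb{F}|\cap\V=|\mathbb{F}^*(D)|\cap\V\) is then obtained the same way, checking that the unique subdivided cone containing a given \(\nu\in\V\cap|\mathbb{F}|\) has relative interior meeting \(\V\) and hence survives the passage from \(\mathbb{F}^a(D)\) to \(\mathbb{F}^*(D)\). This cone-by-cone patching is the essential content missing from your argument; the rest of your outline (validity and face-closure of the new colored cones, and the abstract-versus-genuine comparison on \(\V\)) is sound.
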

\begin{proof}
	From \Cref{def:star-subdiv}, it is clear that each colored cone \((\mathcal{C},\mathcal{F}) \in \mathbb{F}^*(D)\) is contained in some colored cone \((\mathcal{C}', \mathcal{F'}) \in \mathbb{F}^a\).
	Notice that the collection of colored cones in \(\mathbb{F}^a(D)\) that are contained in a fixed colored cone \((\mathcal{C}',\mathcal{F}')\) of \(\mathbb{F}^a\) can be considered a star subdivision of the fan \(\mathbb{F}_{\mathcal{C}'}^a\) arising from the cone \((\mathcal{C}',\mathcal{F}')\) by adding abstract colored faces.
	Here, we think of the star subdivision as trivial (i.e.\ doesn't change the fan) if the colored cone \((\mathcal{C}',\mathcal{F}')\) does not contain \(\rho(D)\).
	Hence, it suffices to consider colored fans \(\mathbb{F}_{\mathcal{C}'}\) associated to a colored cone \((\mathcal{C}',\mathcal{F}')\) with \(\rho(D) \in \mathcal{C}'\).

	In this case, it follows from the corresponding definition of star subdivision in toric geometry that \(\{\mathcal{C} \mid (\mathcal{C},\mathcal{F}) \in \mathbb{F}_{\mathcal{C}'}^a(D)\}\) is a fan (in the toric sense) subdividing the cone \(\mathcal{C}'\).
	In particular, each \(\nu \in \mathcal{C}'\) is contained in the relative interior of exactly one cone \((\mathcal{C},\mathcal{F}) \in \mathbb{F}_{\mathcal{C}'}^a(D)\).
	Removing faces in \(\mathbb{F}_{\mathcal{C}'}^a(D)\) preserves this property.

	From this observation it is straightforward to verify that each \(\nu \in \V\) is contained in the relative interior of at most one colored cone of \(\mathbb{F}^*(D)\).
	Indeed, suppose we have \((\mathcal{C}_1,\mathcal{F}_1), (\mathcal{C}_2,\mathcal{F}_2) \in \mathbb{F}^*(D)\) such that \(\nu \in \mathcal{C}_1^\circ \cap \mathcal{C}_2^\circ\).
	Since, \(\mathbb{F}\) is a colored fan and \(\nu \in \mathcal{C}_1^\circ \subseteq |\mathbb{F}|\), it follows that there exists exactly one colored cone \((\mathcal{C}',\mathcal{F}') \in \mathbb{F}\) which contains \(\nu\) in its relative interior.
	From the definition of star subdivision, there exists \((\widetilde{\mathcal{C}}_i,\widetilde{\mathcal{F}}_i) \in \mathbb{F}^a\) such that \(\mathcal{C}_i \subseteq \widetilde{\mathcal{C}}_i\).
	Since \(\nu \in {(\mathcal{C}')}^\circ\) and \(\mathbb{F}^a\) is a fan in the toric sense when restricted to \(\V\), it follows that \(\mathcal{C'} \preceq \widetilde{\mathcal{C}}_i\), i.e.\ \(\mathcal{C}'\) is a colored face of \(\widetilde{\mathcal{C}}_i\).
	Clearly, \(\mathbb{F}_{\mathcal{C}'}^a(D)\) is a subfan of \(\mathbb{F}_{\widetilde{\mathcal{C}}_i}^a(D)\).
	Since \(\nu \not \in |\mathbb{F}^a_{\widetilde{\mathcal{C}}_i}(D)| \setminus |\mathbb{F}^a_{\mathcal{C}'}(D)| = \widetilde{\mathcal{C}}_i\setminus \mathcal{C}'\), it follows that \(\nu \in |\mathbb{F}_{\mathcal{C}'}^a(D)|\), and thus \(\mathcal{C}_i \in \mathbb{F}_{\mathcal{C}'}^a(D)\) as it is the unique cone in \(\mathbb{F}^a_{\widetilde{\mathcal{C}}_i}(D)\) which contains \(\nu\) in its relative interior.
	Similarly, one shows that \(\mathcal{C}_2 \in \mathbb{F}_{\mathcal{C}'}^a(D)\).
	The statement follows from the observation that \(\mathbb{F}^a_{\mathcal{C}'}(D)\) is a fan in the toric sense, and thus \(\mathcal{C}_1 = \mathcal{C}_2\).

	Finally, notice that if \(\nu \in \V \cap |\mathbb{F}|\) then there exists exactly one cone \((\mathcal{C}',\mathcal{F}') \in \mathbb{F}\) such that \(\nu \in {(\mathcal{C}')}^\circ\).
	Then in the star subdivision \(\mathbb{F}^a_{\mathcal{C}'}(D)\) of the colored fan \(\mathbb{F}^a_{\mathcal{C}'}\) obtained from the colored cone \((\mathcal{C}',\mathcal{F}')\), there is exactly one \((\mathcal{C},\mathcal{F})\) that contains \(\nu\) in its relative interior.
	Clearly, \((\mathcal{C},\mathcal{F}) \in \mathbb{F}^a(D)\).
    
	However, then clearly \(\mathcal{C}^\circ \cap \V\neq \emptyset\), and thus \((\mathcal{C},\mathcal{F}) \in \mathbb{F}^*(D)\).
	This proves that \(|\mathbb{F}|\cap \V \subseteq |\mathbb{F}^*(D)|\cap \V\).
	Conversely, by \Cref{def:star-subdiv}, 
	\[
		|\mathbb{F}^*(D)|\cap \V \subseteq |\mathbb{F}^a|\cap \V = |\mathbb{F}| \cap \V.\qedhere
	\]
\end{proof}

\subsection{Proving Algorithm~\ref{Gorenstein algorithm}}\noindent\phantomsection\label{sec:define-F3}
In this section, we prove \Cref{prop:Gorenstein algorithm}, which asserts the correctness of \Cref{Gorenstein algorithm}.
Throughout, let \(\pi\colon\N_\QQ'=\N_\QQ\oplus\QQ\to\N_\QQ\) be the projection onto the first factor, and write \(n\coloneq\dim \N_\QQ\).
Furthermore, we will denote the additional rays that were introduced in Step~\eqref{make-fan-complete} by \(\pm\rho = \QQ_{\ge0} (0,\pm 1) \subseteq \N'_\QQ = \N_\QQ \oplus \QQ\).
Finally, recall that in Step~\eqref{item:add-kappa} of \Cref{Gorenstein algorithm}, we introduced cones in \(\mathbb{F}_2\) arising from cones \((\mathcal{C},\mathcal{F}) \in \mathbb{F}\) which are denoted by the same symbols with an added apostrophe, i.e.\ \((\mathcal{C}',\mathcal{F}')\).

\begin{prop}\noindent\phantomsection\label{prop:Gorenstein algorithm}
    The output of \Cref{Gorenstein algorithm} is the colored fan \(\mathbb{F}'\) of a complete \(\QQ\)-Gorenstein spherical variety.
\end{prop}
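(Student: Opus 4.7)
The plan is to trace \Cref{Gorenstein algorithm} step by step, verifying at each stage that the intermediate data forms a (possibly abstract) colored fan, and concluding with two checks on \(\mathbb{F}_5\): completeness, and the \(\QQ\)-Gorenstein criterion of \Cref{Q-Gorenstein fan}. \Cref{prop-add-kappa} already delivers a bona fide Luna datum after Step~\eqref{first-step}, and \Cref{new-valuation-cone} identifies the new valuation cone as \(\V' = \V \oplus \QQ\). In both branches of Step~\eqref{first-step} we have \(\ell \kappa \in \M'\) for some \(\ell > 0\), so \Cref{Gorenstein Luna datum} is available throughout the remaining steps.

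For Step~\eqref{item:add-kappa}, let \(\pi \colon \N'_{\QQ} \to \N_{\QQ}\) denote projection onto the first factor. Since \(m_D \ge 1\), the lifting \(\rho'(D) = (\rho(D), m_D)\) is a section of \(\pi\) on the ray generators, so \(\pi\) sends each colored cone of \(\mathbb{F}_2\) bijectively onto the corresponding cone of \(\mathbb{F}\) and is globally injective on \(|\mathbb{F}_2|\). Strict convexity, face-closedness, disjointness of relative interiors inside \(\V'\), and nonempty intersection of each relative interior with \(\V'\) all transfer from \(\mathbb{F}\) via \(\pi\), so \(\mathbb{F}_2\) is a genuine colored fan. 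For Step~\eqref{make-fan-complete}, \(|\mathbb{F}_2|\) is therefore a piecewise-linear ``graph'' over \(|\mathbb{F}|\), and since \(\mathbb{F}\) is complete, each vertical fiber \(\pi^{-1}(v)\) with \(v \in \V\) meets \(|\mathbb{F}_2|\) in exactly one point. Coning every maximal cone of \(\mathbb{F}_2\) with \(\rho'(Y^+) = (0, +1)\) and with \(\rho'(Y^-) = (0, -1)\) fills both vertical tails; the new cones share only faces pairwise and together with \(\mathbb{F}_2\) cover \(\V'\), yielding a complete colored fan \(\mathbb{F}_3\).

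Step~\eqref{item:make-colors-rays} is an iteration of \Cref{prop:star-subdiv} over \(J\): each star subdivision preserves the colored-fan property and the support inside \(\V'\), while placing every color of \(J\) as a ray of each cone whose coloring contains it. Step~\eqref{item:final-step} triangulates each cone of \(\mathbb{F}_4\) without adding new rays (possible by, e.g., a regular pulling subdivision) and then discards abstract cones disjoint from \(\V'\); this preserves completeness inside \(\V'\) and gives the genuine complete colored fan \(\mathbb{F}_5\). Finally, the three conditions of \Cref{Q-Gorenstein fan} are checked: (i) simpliciality is built in by Step~\eqref{item:final-step}; (ii) every color in a coloring spans a ray of its cone after Step~\eqref{item:make-colors-rays}, and this is preserved under the ray-free triangulation of Step~\eqref{item:final-step}; (iii) \Cref{Gorenstein Luna datum} forces \(\rho(D_1)/m_{D_1} = \rho(D_2)/m_{D_2}\) for any pair of codirectional colors in a common coloring. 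The main obstacle is the careful bookkeeping in Step~\eqref{make-fan-complete}: making the ``visibility'' construction precise and verifying that the new extended cones interlock cleanly into a colored fan without disturbing the colorings inherited from \(\mathbb{F}_2\).
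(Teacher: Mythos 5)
There is a genuine gap in your treatment of Steps~\eqref{item:add-kappa} and~\eqref{make-fan-complete}, and it sits exactly at the point the algorithm is designed to address. You claim that \(\pi\) is globally injective on \(|\mathbb{F}_2|\), so that \(|\mathbb{F}_2|\) is a piecewise-linear graph over \(|\mathbb{F}|\) with each fiber \(\pi^{-1}(v)\) meeting it in a single point. This is false in general: the lift \(D \mapsto (\rho(D), m_D)\) need not preserve linear relations among the generators of a colored cone \(\mathcal{C}\). Concretely, if \(D_1, D_2 \in \mathcal{F}\) are codirectional type-\(b\) colors with \(\rho(D_1)/m_{D_1} \neq \rho(D_2)/m_{D_2}\) --- which is precisely the non-\(\QQ\)-Gorenstein configuration the algorithm must repair, and is possible exactly when no multiple of \(\kappa\) lies in \(\M\) --- then \(\rho'(D_1)\) and \(\rho'(D_2)\) are linearly independent while \(\rho(D_1), \rho(D_2)\) span the same ray, so \(\dim \mathcal{C}' = \dim \mathcal{C} + 1\) and the fiber \(\pi^{-1}(v) \cap \mathcal{C}'\) is a nondegenerate segment (this is exactly the segment \([h^u, h_l]\) appearing in \Cref{prop:F3-complete-fan}). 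Consequently your construction of \(\mathbb{F}_3\) --- coning \emph{every} maximal cone of \(\mathbb{F}_2\) with both \(\rho'(Y^+)\) and \(\rho'(Y^-)\) --- fails: when \(\mathcal{C}'\) is ``thick'' over \(\mathcal{C}\), the cones \(\mathcal{C}' + \QQ_{\ge0}(0,1)\) and \(\mathcal{C}' + \QQ_{\ge0}(0,-1)\) are both full-dimensional over \(\mathcal{C}\) and their relative interiors overlap (both contain the interior of the ``thick'' part of \(\mathcal{C}'\) pushed slightly up, resp.\ down), so the resulting collection is not a fan.

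The missing idea is the regular-subdivision machinery: one must view \(\mathbf{A}_{(\mathcal{C}',\mathcal{F}')}\) as a lift of the vector configuration of \((\mathcal{C},\mathcal{F})\) via the heights \(m_D\), decompose its boundary into \emph{upper} and \emph{lower} faces (\Cref{rem:subdivision}), and cone only the upper faces with \(+\rho\) and only the lower faces with \(-\rho\); compatibility of these subdivisions across faces of \(\mathbb{F}\) then has to be checked separately. Your argument that \(\mathbb{F}_2\) is a colored fan also leans on the false injectivity, though that statement itself is salvageable by a projection-fiber argument as in \Cref{prop:preimage-pi} and \Cref{F2-colored-fan}. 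The remaining parts of your outline --- the iteration of \Cref{prop:star-subdiv} in Step~\eqref{item:make-colors-rays}, the ray-free triangulation and discarding of non-genuine cones in Step~\eqref{item:final-step}, and the verification of the three conditions of \Cref{Q-Gorenstein fan} via \Cref{Gorenstein Luna datum} --- are correct and match the intended argument, but the proof cannot be completed without replacing the ``graph'' picture by the upper/lower face decomposition.
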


We now establish some intermediate results, so that \Cref{prop:Gorenstein algorithm} follows as a corollary.

Firstly, we need the following elementary, but important, observation.

\begin{prop}\noindent\phantomsection\label{prop:preimage-pi}
	Let \((\mathcal{C},\mathcal{F}) \in \mathbb{F}\).
	Then 
	\[
		\pi^{-1}(\mathcal{C}) \cap |\mathbb{F}_2| = \mathcal{C}'.
	\]
\end{prop}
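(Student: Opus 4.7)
The plan is to verify the two inclusions separately. The direction $\mathcal{C}' \subseteq \pi^{-1}(\mathcal{C}) \cap |\mathbb{F}_2|$ is immediate from the construction in Step~\eqref{item:add-kappa}: by definition $\mathcal{C}' \subseteq |\mathbb{F}_2|$, and each generator $\rho'(D_i) = (\rho(D_i), m_{D_i})$ of $\mathcal{C}'$ satisfies $\pi(\rho'(D_i)) = \rho(D_i) \in \mathcal{C}$, so $\mathcal{C}' \subseteq \pi^{-1}(\mathcal{C})$.

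For the reverse inclusion, take any $v \in \pi^{-1}(\mathcal{C}) \cap |\mathbb{F}_2|$. Then $v$ lies in some colored cone $(\widetilde{\mathcal{C}}', \widetilde{\mathcal{F}}) \in \mathbb{F}_2$ coming from a colored cone $(\widetilde{\mathcal{C}}, \widetilde{\mathcal{F}}) \in \mathbb{F}$, so we can write $v = \sum_j \mu_j \rho'(\widetilde{D}_j)$ with $\mu_j \geq 0$, where $\{\widetilde{D}_j\}$ is the family of $B$-invariant divisors indexing the rays and colors of $\widetilde{\mathcal{C}}$. Projecting yields $\pi(v) = \sum_j \mu_j \rho(\widetilde{D}_j) \in \widetilde{\mathcal{C}} \cap \mathcal{C}$. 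By the face-intersection property of colored fans arising from spherical varieties, $\tau \coloneq \widetilde{\mathcal{C}} \cap \mathcal{C}$ is a common face of both $\mathcal{C}$ and $\widetilde{\mathcal{C}}$. Pick a supporting linear functional $\ell \in \M_\QQ$ that vanishes on $\tau$ and is non-negative on $\widetilde{\mathcal{C}}$. Then
\[
0 = \langle\pi(v),\ell\rangle = \sum_j \mu_j \langle \rho(\widetilde{D}_j), \ell \rangle
\]
is a sum of non-negative terms, forcing $\mu_j = 0$ whenever $\rho(\widetilde{D}_j) \notin \tau$. Hence $v$ lies in the subcone $\tau' \coloneq \cone\{\rho'(\widetilde{D}_j) : \rho(\widetilde{D}_j) \in \tau\}$ of $\widetilde{\mathcal{C}}'$.

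It remains to show that $\tau'$ coincides with the face of $\mathcal{C}'$ that lifts $\tau$, which will give $v \in \tau' \subseteq \mathcal{C}'$. The critical input here is \Cref{Gorenstein Luna datum}, which applies because Step~\eqref{first-step} of \Cref{Gorenstein algorithm} ensures that some positive multiple of $\kappa$ lies in $\M$. That proposition guarantees that any two codirectional $B$-invariant divisors $D_1, D_2$ (a phenomenon controlled by \Cref{colors codirectional}) also satisfy $\rho'(D_1)$ and $\rho'(D_2)$ codirectional. Consequently, each ray of $\tau$ admits a well-defined lift to a ray of $\N'_\QQ$ independent of whether it is regarded as a ray of $\mathcal{C}$ or of $\widetilde{\mathcal{C}}$, and the two candidate lifts of the face $\tau$ agree.

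The main obstacle I anticipate is precisely this last bookkeeping step: without the hypothesis $\ell\kappa \in \M$ one could construct pairs of codirectional colors whose lifts to $\N'_\QQ$ span different rays, in which case the face of $\mathcal{C}'$ lifting $\tau$ and the subcone $\tau'$ of $\widetilde{\mathcal{C}}'$ lifting $\tau$ would no longer coincide, and the statement would fail. Thus the non-trivial content of the proposition is that the combination of Step~\eqref{first-step} and the face-intersection property forces these two lifts to match.
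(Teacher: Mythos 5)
Your overall skeleton for the inclusion \(\pi^{-1}(\mathcal{C})\cap|\mathbb{F}_2|\subseteq\mathcal{C}'\) (express the point in the lifted generators of the cone of \(\mathbb{F}_2\) containing it, project, kill the coefficients outside a common face by a supporting functional) is the same as the paper's, but there is a genuine gap at the pivotal step: you assert that \(\tau\coloneq\widetilde{\mathcal{C}}\cap\mathcal{C}\) is a common face of \(\mathcal{C}\) and \(\widetilde{\mathcal{C}}\) ``by the face-intersection property of colored fans''. Colored fans have no such property. The separation axiom of Luna--Vust theory only requires that points of the \emph{valuation cone} lie in the relative interior of at most one colored cone; outside \(\V\), two colored cones of \(\mathbb{F}\) may overlap in a region that is a face of neither (this is precisely why the paper works with abstract colored fans and why statements such as \Cref{prop:star-subdiv} are formulated only for \(|\mathbb{F}|\cap\V\)). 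When \(\widetilde{\mathcal{C}}\cap\mathcal{C}\) is not a face of \(\widetilde{\mathcal{C}}\), the functional \(\ell\) you need --- vanishing exactly on \(\tau\) and strictly positive on the generators of \(\widetilde{\mathcal{C}}\) outside \(\tau\) --- does not exist, and the step forcing \(\mu_j=0\) collapses. The missing idea is to route the argument through the valuation cone: the cone \(\sigma\in\mathbb{F}_2\) containing your point is a \emph{genuine} colored cone, so \(\sigma^\circ\) meets \(\V'=\V\oplus\QQ\), hence \(\pi(\sigma)^\circ\) meets \(\V\); applying the separation axiom at such a point of \(\V\) produces a common colored face \((\tau,\gamma)\in\mathbb{F}\) of \((\mathcal{C},\mathcal{F})\) and \((\widetilde{\mathcal{C}},\widetilde{\mathcal{F}})\) with \(\pi(\sigma)\subseteq\tau\), and only then does your supporting-functional argument go through.

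Your final step is also misdirected. Once \(\tau\) is realised as a single colored cone \((\tau,\gamma)\in\mathbb{F}\) that is a face of both \((\mathcal{C},\mathcal{F})\) and \((\widetilde{\mathcal{C}},\widetilde{\mathcal{F}})\), the surviving generators in the expression for \(v\) are indexed by the set of \(B\)-invariant divisors associated to \((\tau,\gamma)\), and this set is contained in the divisor set of \((\mathcal{C},\mathcal{F})\) simply because \((\tau,\gamma)\preceq(\mathcal{C},\mathcal{F})\); so \(v\in\tau'\subseteq\mathcal{C}'\) directly. There is no need to compare two candidate lifts of \(\tau\) or to invoke \Cref{Gorenstein Luna datum} here --- the ambiguity you worry about is excluded by the separation axiom (the colors of the common face computed from either side coincide), and \Cref{Gorenstein Luna datum} is needed later, for the \(\QQ\)-Gorenstein property of the output fan, not for this proposition.
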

\begin{proof}
	The inclusion ``\(\supseteq\)'' is clear.

	Let \((\sigma,\varphi) \in \mathbb{F}_2\) be such that \(\pi(\sigma) \subseteq \mathcal{C}\).
	By definition of \(\mathbb{F}_2\), there exists \((\mathcal{C}_0,\mathcal{F}_0) \in \mathbb{F}\) such that \((\sigma,\varphi) \preceq (\mathcal{C}'_0,\mathcal{F}'_0)\).
	Since \(\pi(\sigma) \subseteq \mathcal{C}\cap\mathcal{C}_0\), a straightfoward argument shows that \(\pi(\sigma) \subseteq \tau\) for some colored cone \((\tau,\gamma) \in \mathbb{F}\) with \(\tau\preceq \mathcal{C}\) and \(\tau \preceq \mathcal{C}_0\).

    For a colored cone \((\mathcal{C}_1,\mathcal{F}_1)\in\mathbb{F}\), let us write \(\Delta(\mathcal{C}_1,\mathcal{F}_1)\) for the set of \(B\)-invariant divisors associated to the colored cone \((\mathcal{C}_1,\mathcal{F}_1)\).
    Then for all \(x\in\sigma\), there is an expression of the form \(x=\sum_{D\in\Delta(\mathcal{C}_0,\mathcal{F}_0)}\lambda_D\rho'(D)\) for \(\lambda_D\ge0\). 
    So, \(\pi(x)=\sum_{D\in\Delta(\mathcal{C}_0,\mathcal{F}_0)}\lambda_D\rho(D)\in\tau\).
    Since \(\tau\preceq\mathcal{C}_0\), we get \(\lambda_D=0\) for \(D\in\Delta(\mathcal{C}_0,\mathcal{F}_0)\setminus \Delta(\tau,\gamma)\).
    Therefore \(x=\sum_{D\in\Delta(\tau,\gamma)}\lambda_D\rho'(D)\in\tau'\subseteq\mathcal{C}'\).
\end{proof}

\begin{prop}\noindent\phantomsection\label{F2-colored-fan}
    \(\mathbb{F}_2\) is a colored fan.
\end{prop}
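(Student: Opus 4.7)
The plan is to verify each of the defining axioms of a colored fan for \(\mathbb{F}_2\): each element is a genuine colored cone, the collection is closed under taking colored faces, and each point of the valuation cone \(\V' = \V\oplus\QQ\) lies in the relative interior of at most one member. Closure under colored faces is built into the construction of \(\mathbb{F}_2\), so the substantive content is the colored cone axioms for the lifts of maximal cones of \(\mathbb{F}\), and the separation property.

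For the colored cone axioms on a maximal element \((\mathcal{C}',\mathcal{F}')\) coming from \((\mathcal{C},\mathcal{F})\in\mathbb{F}\), I would first establish strict convexity. If \(\pm v\in\mathcal{C}'\), then \(\pm\pi(v)\in\mathcal{C}\), and strict convexity of \(\mathcal{C}\) forces \(\pi(v)=0\), so \(v=(0,t)\) for some \(t\in\QQ\); writing both \(v\) and \(-v\) as non-negative combinations of the generators \(\rho'(D_i)\), whose last coordinates \(m_{D_i}\) (or \(1\) for \(G\)-invariant divisors) are all strictly positive, forces \(t=0\). The remaining axioms are routine: \(\rho'(D)\neq 0\) for every \(D\in\mathcal{F}\) since \(m_D\ge 1\); each \(G\)-invariant generator \(\rho'(X_i)=(\rho(X_i),1)\) lies in \(\V'\cap\N'\) by \Cref{new-valuation-cone}; and a point \(w\in\mathcal{C}^\circ\cap\V\), written as a strictly positive combination of the generators of \(\mathcal{C}\), lifts via the same coefficients to a strictly positive combination of the generators of \(\mathcal{C}'\), producing a point of \((\mathcal{C}')^\circ\cap\V'\).

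The heart of the proof is the separation property. Suppose \(v'\in\V'\) lies in the relative interior of two colored cones \(\mathcal{C}'_1,\mathcal{C}'_2\in\mathbb{F}_2\); each \(\mathcal{C}'_i\) is a colored face of some lift \(\widetilde{\mathcal{C}}'_i\) coming from \((\widetilde{\mathcal{C}}_i,\widetilde{\mathcal{F}}_i)\in\mathbb{F}\). Then \(\pi(v')\in\V\cap\widetilde{\mathcal{C}}_1\cap\widetilde{\mathcal{C}}_2\), so by the fan property of \(\mathbb{F}\) there is a unique \((\mathcal{C},\mathcal{F})\in\mathbb{F}\) with \(\pi(v')\in\mathcal{C}^\circ\), and \(\mathcal{C}\) is a common face of \(\widetilde{\mathcal{C}}_1\) and \(\widetilde{\mathcal{C}}_2\). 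By \Cref{prop:preimage-pi}, \(v'\in\mathcal{C}'\). Moreover, any face of \(\widetilde{\mathcal{C}}_i\) cut out by a linear form \(u\) lifts to the face of \(\widetilde{\mathcal{C}}'_i\) cut out by \((u,0)\); applied to \(\mathcal{C}\), this identifies \(\mathcal{C}'\) as a common face of \(\widetilde{\mathcal{C}}'_1\) and \(\widetilde{\mathcal{C}}'_2\). Then \(\mathcal{C}'_i\cap\mathcal{C}'\) is a face of \(\widetilde{\mathcal{C}}'_i\) containing the relative interior point \(v'\) of \(\mathcal{C}'_i\), which forces \(\mathcal{C}'_i\subseteq\mathcal{C}'\). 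Thus \(\mathcal{C}'_1\) and \(\mathcal{C}'_2\) are two faces of the single cone \(\mathcal{C}'\) sharing a common relative interior point, hence coincide as cones; their colorings agree because the coloring of a face \(\tau'\preceq\mathcal{C}'\) is determined by \(\{D\in\mathcal{F}:\rho'(D)\in\tau'\}\). I expect the main obstacle to be exactly this final step, where identifying \(\mathcal{C}'\) as the ``correct'' lifted common face — via \Cref{prop:preimage-pi} and the \((u,0)\)-lifting of supporting forms — is what makes the separation go through.
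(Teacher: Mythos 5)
Your argument is correct and follows essentially the same route as the paper's proof: reduce to the separation axiom, project a common relative-interior point down to \(\mathbb{F}\), identify the colored cone \(\mathcal{C}\) of \(\mathbb{F}\) containing it in its relative interior as a common face of the two ambient cones, and then use \Cref{prop:preimage-pi} together with the lift \((u,0)\) of a supporting functional to exhibit \(\mathcal{C}'\) as a common face of the lifted cones in which both given cones sit as faces sharing a relative-interior point. Your additional verification of the colored cone axioms (strict convexity via positivity of the last coordinates, nonvanishing of \(\rho'(D)\), and \((\mathcal{C}')^\circ\cap\V'\neq\emptyset\)) is a routine check the paper leaves implicit, and does not change the substance of the argument.
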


\begin{proof} 
	We show that for colored cones \((\sigma_1,\varphi_1),(\sigma_2,\varphi_2)\in\mathbb{F}_2\) such that there exists a point \(x\in\sigma_1^\circ\cap\sigma_2^\circ\cap\V'\neq\emptyset\), we have \(\sigma_1 = \sigma_2\).
	Let us first recall that the colored cones of \(\mathbb{F}_2\) are \((\sigma,\varphi)\preceq(\mathcal{C}',\mathcal{F}')\) for \((\mathcal{C}',\mathcal{F}')\) a genuine colored cone arising from \((\mathcal{C},\mathcal{F})\in\mathbb{F}\).
	So there exist colored cones \((\mathcal{C}_i,\mathcal{F}_i)\in\mathbb{F}\) such that \((\sigma_i,\varphi_i)\preceq(\mathcal{C}'_i,\mathcal{F}'_i)\) for \(i=1,2\).

	Since \(\mathbb{F}\) is a colored fan, there is a colored cone \((\mathcal{C},\mathcal{F})\in\mathbb{F}\) with \(\pi(x)\in\mathcal{C}^\circ\).
	Moreover, since \(\pi(\sigma_i^\circ)={\pi(\sigma_i)}^\circ\), we have \(\pi(x)\in{\pi(\sigma_1)}^\circ\cap{\pi(\sigma_2)}^\circ\subseteq\mathcal{C}_1\cap\mathcal{C}_2\).
	From this, it is straightfoward to show that \((\mathcal{C},\mathcal{F})\preceq(\mathcal{C}_1,\mathcal{F}_1),(\mathcal{C}_2,\mathcal{F}_2)\), and \(\pi(\sigma_i) \subseteq\mathcal{C}\).
	We now show that the face property also holds for the corresponding colored cones in \(\mathbb{F}_2\), i.e.\ \((\mathcal{C}',\mathcal{F}')\preceq(\mathcal{C}_1',\mathcal{F}_1'),(\mathcal{C}_2',\mathcal{F}_2')\).
	Indeed, there are linear functionals \(f_i\in\mathcal{C}_i^\vee\) which select the face \(\mathcal{C}\): that is \(\{f_i=0\}\cap\mathcal{C}_i=\mathcal{C}\). 
Then it follows from \Cref{prop:preimage-pi} that \((f_i,0)\in\mathcal{C}_i'^\vee\) selects \(\mathcal{C}'\) as a face of \(\mathcal{C}_i'\).

	Recall that \(\pi(\sigma_i)\subseteq\mathcal{C}\). Then by \Cref{prop:preimage-pi}, we have \(\sigma_i\subseteq\mathcal{C}'\). 
	Using \((\sigma_i,\varphi_i) \preceq (\mathcal{C}'_i,\mathcal{F}'_i)\), and \((\mathcal{C}',\mathcal{F}')\preceq(\mathcal{C}_1',\mathcal{F}_1'),(\mathcal{C}_2',\mathcal{F}_2')\), it follows that \((\sigma_1,\varphi_1),(\sigma_2,\varphi_2)\) are faces of \((\mathcal{C}',\mathcal{F}')\).
	As the faces of \((\mathcal{C}',\mathcal{F}')\) form a colored fan, from \(x\in\sigma_1^\circ\cap\sigma_2^\circ\cap\V'\) we deduce that \(\sigma_1=\sigma_2\).
\end{proof}

To prove that \(\mathbb{F}_3\) is a colored fan, we use \emph{vector configurations}, for which we briefly recall the necessary background.
We use~\cite{DLRSTriangulations2010} as a general reference of this well-known topic.

\begin{defin}[{\cite[Section~2.5]{DLRSTriangulations2010}}]
    A \emph{vector configuration} is a multi-set of vectors:
    \[
        \mathbf{A}\coloneq(v_i\,|\,i\in S)\subseteq\RR^n.
    \]
	For each linear functional \(\psi\in{(\RR^n)}^*\) with \(\psi(\mathbf{A})\ge0\), the \emph{face of \(\mathbf{A}\) in direction \(\psi\)} is the maximal subconfiguration \(S\subseteq\mathbf{A}\) on which \(\psi(S)=0\).
    Write \(F\le S\) if \(F\) is a face of \(S\).
    
    A \emph{polyhedral subdivision} of a vector configuration \(\mathbf{A}\) is a collection of subconfigurations \(\mathscr{S}\), called \emph{cells}, satisfying the following properties:
    \begin{enumerate}
    \item
        For all \(C\in \mathscr{S}\), if \(F\le C\) then \(F\in \mathscr{S}\).
    \item
        \(\bigcup_{C\in\mathscr{S}}\cone(C)\supseteq\cone A\).
    \item
        \(\mathcal{C}^\circ\cap\mathcal{C}'^\circ\neq\emptyset\) for \(\mathcal{C},\mathcal{C}'\in\mathscr{S}\) implies that \(\mathcal{C}=\mathcal{C}'\).
    \end{enumerate}
    
    A \emph{lift} of a vector configuration \(\mathbf{A}\) is a function \(\omega\colon \mathbf{A}\to\RR\), which associates a \emph{height} to each vector in \(\mathbf{A}\).
    A lift gives rise to a \emph{lifted} vector configuration \(\mathbf{A}^\omega\coloneq\{(v_i,\omega(v_i))\mid i\in S\}\).
    
    Given a lift \(\omega\colon \mathbf{A}\to\RR\), the \emph{lower faces} \(\mathbf{A}^l\) of \(\mathbf{A}^\omega\) are the faces of \(\mathbf{A}^\omega\) in the direction of a linear functional \(\psi\) which is positive on the last coordinate.
    Similarly, the \emph{upper faces} \(\mathbf{A}^u\) of \(\mathbf{A}^\omega\) are the faces of \(\mathbf{A}^\omega\) in the direction of a linear functional \(\psi\) which is negative on the last coordinate.
\end{defin}

\begin{rem}\noindent\phantomsection\label{rem:equiv-descr-lower-upper-faces}
	By using \(\sigma^{\vee\vee} = \sigma\) for each polyhedral cone \(\sigma \subseteq \N'_\QQ\), it is straightfoward to show that \(F \le \mathbf{A}^\omega\) is a lower face (resp.~upper face) if and only if \(x-\lambda v_\rho \not \in \cone(\mathbf{A}^\omega)\) (resp.~\(x+\lambda v_\rho \not \in \cone(\mathbf{A}^\omega)\)) for each $x \in \cone(F)$ and $\lambda>0$.
	Here, \(v_\rho\) denotes a ray generator of the ray \(\rho\).
\end{rem}

We require the following result:

\begin{lemma}[{\cite[Lemma~2.5.11]{DLRSTriangulations2010}}]\noindent\phantomsection\label{rem:subdivision}
	Let \(\omega\colon\mathbf{A}\to\RR\) be a lift of a vector configuration \(\mathbf{A}\).
	If there exists a linear function \(\ell\colon \RR^n\to\RR\) such that \((\omega+\ell)(\mathbf{A})\ge0\), then the lower faces \(\mathbf{A}^l\) of \(\mathbf{A}^\omega\) give rise to a polyhedral subdivision of \(\mathbf{A}\), whose cells are \(\pi(C)\) for \(C\) a cell of \(\mathbf{A}^l\).
	Here, \(\pi\) is the projection which forgets the last coordinate.
\end{lemma}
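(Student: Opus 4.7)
The plan is to carry out the standard ``lower envelope'' construction of regular (coherent) subdivisions. First, I would use the hypothesis on \(\ell\) to reduce to the case \(\omega\ge 0\): the shear \(T\colon \RR^{n+1}\to \RR^{n+1}\), \((x,h)\mapsto (x,h+\ell(x))\), is a linear automorphism that sends \(\cone(\mathbf{A}^\omega)\) to \(\cone(\mathbf{A}^{\omega+\ell})\), commutes with \(\pi\), and is monotone on each vertical line, so it preserves the notion of lower face. Replacing \(\omega\) by \(\omega+\ell\) I may therefore assume \(\cone(\mathbf{A}^\omega)\subseteq \RR^n\times [0,\infty)\); in particular each vertical fibre is bounded below.

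Under this reduction, for every \(y\in \pi(\cone(\mathbf{A}^\omega))=\cone(\mathbf{A})\), the set \(\{h\in\RR\mid (y,h)\in\cone(\mathbf{A}^\omega)\}\) is nonempty and bounded below, so its infimum \(h_{\min}(y)\) is attained. Using the characterisation from \Cref{rem:equiv-descr-lower-upper-faces}, the point \((y,h_{\min}(y))\) lies in the relative interior of a unique face of \(\cone(\mathbf{A}^\omega)\), and this face must come from a lower face \(F\): otherwise we could descend slightly from \((y,h_{\min}(y))\) without leaving the cone, contradicting minimality. Conversely, if \(F\) is lower and \(y\in \pi(\cone(F))\), then minimality forces \((y,h_{\min}(y))\in \cone(F)\).

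With this in hand the three subdivision axioms follow in turn. Face-closedness: if \(G\le F\) is cut out by a functional \(\varphi\) nonnegative on \(\cone(F)\), and \(\psi\) witnesses \(F\) as a lower face (so \(\psi\) is positive on the last coordinate and vanishes on \(\cone(F)\)), then a small perturbation \(\psi+\varepsilon\varphi\) witnesses \(G\) as a lower face. Covering: by the previous paragraph every \(y\in\cone(\mathbf{A})\) lies in \(\pi(\cone(F))\) for some lower face \(F\). Compatibility: if \(y\) lies in the relative interiors of \(\pi(\cone(F_1))\) and \(\pi(\cone(F_2))\) for two lower faces, then \((y,h_{\min}(y))\) lies in the relative interiors of both \(F_1\) and \(F_2\), but a point of a polyhedral cone lies in the relative interior of a unique face, so \(F_1=F_2\) and hence \(\pi(\cone(F_1))=\pi(\cone(F_2))\).

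The step I expect to be the main obstacle is the compatibility axiom, because it hinges on the subtle claim that \(h_{\min}\) is piecewise linear with linearity domains exactly the cells \(\pi(\cone(F))\). The delicate point is ruling out that the minimum could be attained along a higher-dimensional slice transverse to \(\pi\); this is precisely where the definition of \emph{lower} face from \Cref{rem:equiv-descr-lower-upper-faces} is used, namely that any small vertical descent from any point of \(\cone(F)\) immediately leaves \(\cone(\mathbf{A}^\omega)\), so that the lower envelope is a genuine graph over \(\cone(\mathbf{A})\).
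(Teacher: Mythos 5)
The paper offers no proof of this lemma: it is imported verbatim from De Loera--Rambau--Santos \cite[Lemma~2.5.11]{DLRSTriangulations2010}, so there is no internal argument to compare against. Your proof is the standard lower-envelope construction for regular subdivisions, transported to the conical setting of vector configurations, and it is essentially correct. The shear reduction correctly isolates what the hypothesis on \(\ell\) buys — every vertical fibre of \(\cone(\mathbf{A}^\omega)\) becomes closed, nonempty and bounded below, so the lower envelope is a genuine graph over \(\cone(\mathbf{A})\) and \(\pi\) is injective on each lower face — and the verifications of the three axioms (perturbation \(\psi+\varepsilon\varphi\) for face-closedness, existence of \(h_{\min}\) for covering, uniqueness of the face containing a point in its relative interior for compatibility) are all sound. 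One step deserves an extra sentence: when you conclude that the unique face \(F\) with \((y,h_{\min}(y))\in\cone(F)^\circ\) is a lower face, the characterisation in \Cref{rem:equiv-descr-lower-upper-faces} demands that one cannot descend from \emph{any} point of \(\cone(F)\), whereas minimality only forbids descent at \((y,h_{\min}(y))\) itself. The upgrade is a routine convexity argument: if \(x-\lambda v_\rho\in\cone(\mathbf{A}^\omega)\) for some \(x\in\cone(F)\) and \(\lambda>0\), then for small \(\varepsilon>0\) the point \(x_0+\varepsilon(x_0-x)\) still lies in \(\cone(F)\) (as \(x_0\coloneq(y,h_{\min}(y))\) is in the relative interior), and the positive combination \(\frac{1}{1+\varepsilon}\bigl((x_0+\varepsilon(x_0-x))+\varepsilon(x-\lambda v_\rho)\bigr)=x_0-\tfrac{\varepsilon\lambda}{1+\varepsilon}v_\rho\) would lie in \(\cone(\mathbf{A}^\omega)\), contradicting minimality. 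With that remark inserted, the argument is complete.
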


The relation of vector configurations to our setting is that colored cones can naturally be interpretated as vector configurations, where in general, it is necessary to consider multisets.

\begin{defin}
    We associate the following vector configuration to a colored cone \((\mathcal{C},\mathcal{F})\):
    \[
        \mathbf{A}_{(\mathcal{C},\mathcal{F})}\coloneq\{v_1,\dots,v_{r_\mathcal{C}}\}\cup(\rho(D)\mid D\in\mathcal{F}),
    \]
    where the primitive ray generators of rays of \(\mathcal{C}\)  corresponding to \(G\)-invariant divisors are \(\{v_1,\dots,v_{r_\mathcal{C}}\}\), and \((\rho(D)\mid D\in\mathcal{F})\) is considered as a multiset.
\end{defin}

Then the cones in \(\mathbb{F}_2\) accept a natural interpretation as lifted vector configurations coming from cones in \(\mathbb{F}\).

\begin{defin}
    For each colored cone \((\mathcal{C},\mathcal{F})\in\mathbb{F}\), consider \((\mathcal{C}',\mathcal{F}')\in\mathbb{F}_2\).
    Its associated vector configuration \(\mathbf{A}_{(\mathcal{C}',\mathcal{F}')}\) is a lifting of \(\mathbf{A}_{(\mathcal{C},\mathcal{F})}\) with height function \(\omega\colon \mathbf{A}_{(\mathcal{C},\mathcal{F})}\to\RR\), defined by \(\omega(v_i)=1\) for \(v_i\) the primitive ray generator of the ray of a \(G\)-invariant divisor, and \(\omega(\rho(D))=m_D\ge1\) for \(D\in\mathcal{F}\).
\end{defin}

In order to define \(\mathbb{F}_3\), we note the following subdivisions of the colored cones of \(\mathbb{F}\):

\begin{prop}\label{prop:subdivision-of-cone}
    For each \((\mathcal{C},\mathcal{F})\in\mathbb{F}\), there are two polyhedral subdivisions of \((\mathcal{C},\mathcal{F})\) into abstract colored cones: \(\mathscr{S}^u(\mathcal{C},\mathcal{F})\) and \(\mathscr{S}_l(\mathcal{C},\mathcal{F})\), which in the sense of \Cref{rem:subdivision}, arise from the upper and lower faces of \(\mathbf{A}_{(\mathcal{C}',\mathcal{F}')}\) respectively.
\end{prop}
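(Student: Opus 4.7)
The plan is to apply \Cref{rem:subdivision} twice: once directly to produce $\mathscr{S}_l(\mathcal{C},\mathcal{F})$ from lower faces, and once to a sign-flipped lift to produce $\mathscr{S}^u(\mathcal{C},\mathcal{F})$ from upper faces.

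First I would handle the lower subdivision. Every element $v \in \mathbf{A}_{(\mathcal{C},\mathcal{F})}$ satisfies $\omega(v) \ge 1$: primitive ray generators of rays associated to $G$-invariant divisors are lifted to height $1$ by construction, and each color $D \in \mathcal{F}$ has $m_D \ge 1$ by \Cref{rem:coeffs-anticanonical}. Therefore the trivial linear function $\ell = 0$ satisfies $(\omega + \ell)(\mathbf{A}_{(\mathcal{C},\mathcal{F})}) \ge 0$, so \Cref{rem:subdivision} applies and produces $\mathscr{S}_l(\mathcal{C},\mathcal{F})$ as the projections of the lower faces of $\mathbf{A}_{(\mathcal{C}',\mathcal{F}')}$.

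For the upper subdivision, I would use that the upper faces of $\mathbf{A}_{(\mathcal{C}',\mathcal{F}')}$ with respect to the lift $\omega$ correspond bijectively, via negation of the last coordinate, to the lower faces of the configuration lifted instead by $-\omega$. To apply \Cref{rem:subdivision} with lift $-\omega$, one needs a linear function $\ell$ on $\N_\QQ$ with $\ell(v) \ge \omega(v)$ for every $v \in \mathbf{A}_{(\mathcal{C},\mathcal{F})}$. The hard part will be producing such an $\ell$; this is where strict convexity of $\mathcal{C}$ (which is part of the data of a colored cone) is essential. Since the dual cone $\mathcal{C}^\vee$ has non-empty interior, there exists a linear functional $\phi$ on $\N_\QQ$ with $\phi(v) > 0$ for every non-zero $v \in \mathcal{C}$. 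Because $\mathbf{A}_{(\mathcal{C},\mathcal{F})}$ is finite and no element is zero (colors in $\mathcal{F}$ satisfy $\rho(D) \ne 0$ by the colored cone axioms), a sufficiently large positive scalar multiple $\ell \coloneqq \lambda \phi$ satisfies the required inequality, and \Cref{rem:subdivision} then yields $\mathscr{S}^u(\mathcal{C},\mathcal{F})$.

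Finally I would verify that the cells of both subdivisions are genuine abstract colored cones: each cell is generated by a subset of $\mathbf{A}_{(\mathcal{C},\mathcal{F})}$, so it inherits a natural coloring (those $D \in \mathcal{F}$ whose $\rho(D)$ lies among the generators of the cell), strict convexity descends from $\mathcal{C}$, and the non-zero condition on colors is preserved. I expect no further subtleties beyond the construction of $\ell$ in the upper case.
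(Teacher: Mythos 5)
Your proposal is correct and follows essentially the same route as the paper: the positivity of \(\omega\) handles the lower faces via \Cref{rem:subdivision} directly, and strict convexity of \(\mathcal{C}\) supplies a linear functional \(\ell\) positive on \(\mathcal{C}\setminus\{0\}\) so that \(-\omega+\lambda\ell\ge 0\) for large \(\lambda\), yielding the upper subdivision. The final observation that each cell inherits a coloring and gives an abstract colored cone also matches the paper's argument.
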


\begin{proof}
    Since \(\omega\colon \mathbf{A}_{(\mathcal{C},\mathcal{F})}\to\RR\) is positive, by \Cref{rem:subdivision} the lower faces of \(\mathbf{A}_{(\mathcal{C}',\mathcal{F}')}\) give rise to a polyhedral subdivision \(\mathscr{S}_l(\mathcal{C},\mathcal{F})\) of \(\mathbf{A}_{(\mathcal{C},\mathcal{F})}\).
	Similarly, as \(\mathcal{C}\) is strictly convex, there is a linear function \(\ell\) which is strictly positive on \(\mathcal{C}\setminus\{0\}\).
    So applying \Cref{rem:subdivision} to the height function \(-\omega+\lambda\ell\) for a sufficiently large \(\lambda>0\), the upper faces of \(\mathbf{A}_{(\mathcal{C}',\mathcal{F}')}\) with respect to \(\omega\) give rise to a polyhedral subdivision \(\mathscr{S}^u(\mathcal{C},\mathcal{F})\) of \(\mathbf{A}_{(\mathcal{C},\mathcal{F})}\). 

    To each cell \(S\) in the subdivisions \(\mathscr{S}^u(\mathcal{C},\mathcal{F})\), \(\mathscr{S}_l(\mathcal{C},\mathcal{F})\), we naturally associate the abstract colored cone generated by \(S\), which inherits the colorings of the vectors in \(S\).
\end{proof}

\begin{rem}
	For a colored cone \((\sigma^u,\varphi^u) \in \mathscr{S}^u(\mathcal{C},\mathcal{F})\), we write \(((\sigma^u)',\varphi^u)\) to denote the corresponding upper face of \(\mathbf{A}_{(\mathcal{C}',\mathcal{F}')}\).
	Similarly, for the lower faces of \(\mathbf{A}_{(\mathcal{C}',\mathcal{F}')}\) we write \((\sigma'_l,\varphi_l)\) if \((\sigma_l,\varphi_l) \in \mathscr{S}_l(\mathcal{C},\mathcal{F})\).
\end{rem}

Let \((\mathcal{C},\mathcal{F}),(\widetilde{\mathcal{C}},\widetilde{\mathcal{F}})\in\mathbb{F}\) be colored cones such that \((\mathcal{C},\mathcal{F})\preceq(\widetilde{\mathcal{C}},\widetilde{\mathcal{F}})\). The following \Cref{prop:compatible-subdivision} asserts that the subdivisions \(\mathscr{S}^u(\mathcal{C},\mathcal{F})\) and \(\mathscr{S}^u(\widetilde{\mathcal{C}},\widetilde{\mathcal{F}})\) are ``compatible''.

\begin{prop}\noindent\phantomsection\label{prop:compatible-subdivision}
	Let \((\mathcal{C},\mathcal{F}) \in \mathbb{F}\) and let \((\mathcal{C}',\mathcal{F}')\) be the lifted cone in \(\mathbb{F}_2\).
	Suppose there exists a face \((\widetilde{\mathcal{C}},\widetilde{\mathcal{F}}) \preceq (\mathcal{C},\mathcal{F})\) and \((\sigma,\varphi) \in \mathscr{S}^u(\mathcal{C},\mathcal{F})\) with \(\sigma' \subseteq \widetilde{\mathcal{C}}'\).
	Then \(\sigma \in \mathscr{S}^u(\widetilde{\mathcal{C}},\widetilde{\mathcal{F}})\).
\end{prop}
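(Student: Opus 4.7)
The plan is to reduce the claim to the explicit characterisation of upper faces recalled in Remark~\ref{rem:equiv-descr-lower-upper-faces}. The key observation is that the inclusion $(\widetilde{\mathcal{C}},\widetilde{\mathcal{F}})\preceq(\mathcal{C},\mathcal{F})$ in $\mathbb{F}$ lifts to an inclusion $\widetilde{\mathcal{C}}'\subseteq\mathcal{C}'$ of the corresponding cones in $\mathbb{F}_2$: the ray generators of $\widetilde{\mathcal{C}}$ corresponding to $G$-invariant divisors are a subset of those of $\mathcal{C}$, and $\widetilde{\mathcal{F}}=\{D\in\mathcal{F}\mid\rho(D)\in\widetilde{\mathcal{C}}\}$ by the definition of a colored face. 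Consequently $\mathbf{A}_{(\widetilde{\mathcal{C}}',\widetilde{\mathcal{F}}')}\subseteq\mathbf{A}_{(\mathcal{C}',\mathcal{F}')}$ as multisets.

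Next I would verify that the multiset of generators of $\sigma'$ is actually a subconfiguration of $\mathbf{A}_{(\widetilde{\mathcal{C}}',\widetilde{\mathcal{F}}')}$. A generator of $\sigma'$ has the form $(v,\omega(v))$ for some $v$ that is either a primitive ray generator of $\mathcal{C}$ (associated to a $G$-invariant divisor) or $\rho(D)$ for some $D\in\mathcal{F}$. Since $\sigma'\subseteq\widetilde{\mathcal{C}}'$, projecting gives $v\in\widetilde{\mathcal{C}}$. In the first case, because $\widetilde{\mathcal{C}}$ is an ordinary face of $\mathcal{C}$, the ray $\mathbb{Q}_{\geq0}v$ must be a ray of $\widetilde{\mathcal{C}}$, so $v$ appears in $\mathbf{A}_{(\widetilde{\mathcal{C}},\widetilde{\mathcal{F}})}$. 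In the second case, the face condition $\widetilde{\mathcal{F}}=\{D\in\mathcal{F}\mid\rho(D)\in\widetilde{\mathcal{C}}\}$ forces $D\in\widetilde{\mathcal{F}}$, and the same reasoning handles the coloring $\varphi$ of~$\sigma'$.

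Finally, I would invoke the characterisation of Remark~\ref{rem:equiv-descr-lower-upper-faces}: the subconfiguration $\sigma'$ of $\mathbf{A}_{(\widetilde{\mathcal{C}}',\widetilde{\mathcal{F}}')}$ is an upper face if and only if $x+\lambda v_\rho\notin\widetilde{\mathcal{C}}'$ for every $x\in\sigma'$ and $\lambda>0$. But $\sigma'$ is already an upper face of $\mathbf{A}_{(\mathcal{C}',\mathcal{F}')}$, so $x+\lambda v_\rho\notin\mathcal{C}'$, and the inclusion $\widetilde{\mathcal{C}}'\subseteq\mathcal{C}'$ established in the first step yields the required non-containment in $\widetilde{\mathcal{C}}'$. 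Hence $\sigma'$ is an upper face of $\mathbf{A}_{(\widetilde{\mathcal{C}}',\widetilde{\mathcal{F}}')}$, and projecting back along $\pi$ gives $(\sigma,\varphi)\in\mathscr{S}^u(\widetilde{\mathcal{C}},\widetilde{\mathcal{F}})$.

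There is no real obstacle here beyond bookkeeping; the mild subtlety is making sure that the coloring data behaves correctly when passing between $\mathcal{C}$ and its face $\widetilde{\mathcal{C}}$, which is settled by the face definition, and that one uses the ``cone non-membership'' form of upper faces rather than their dual-functional description, since the latter would require re-selecting a separating functional after restricting from $\mathcal{C}'$ to $\widetilde{\mathcal{C}}'$.
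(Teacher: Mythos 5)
Your argument is correct and is essentially an expanded version of the paper's proof, which simply cites \Cref{rem:equiv-descr-lower-upper-faces} together with \Cref{prop:preimage-pi}: you obtain the inclusions \(\mathbf{A}_{(\widetilde{\mathcal{C}}',\widetilde{\mathcal{F}}')}\subseteq\mathbf{A}_{(\mathcal{C}',\mathcal{F}')}\) and \(\widetilde{\mathcal{C}}'\subseteq\mathcal{C}'\) directly from the definition of a colored face where the paper extracts them from \Cref{prop:preimage-pi}, and then apply the same non-membership characterisation of upper faces. The only point worth making explicit is that \Cref{rem:equiv-descr-lower-upper-faces} characterises upper faces among faces, so one should note that the functional exhibiting \(\sigma'\) as a face of \(\mathbf{A}_{(\mathcal{C}',\mathcal{F}')}\) restricts to exhibit it as a face of the subconfiguration \(\mathbf{A}_{(\widetilde{\mathcal{C}}',\widetilde{\mathcal{F}}')}\), which your second paragraph already makes immediate.
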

\begin{proof}
	This is clear from \Cref{rem:equiv-descr-lower-upper-faces} and \Cref{prop:preimage-pi}.
\end{proof}

\begin{defin}\noindent\phantomsection\label{def:F3-colored-cones}
    We may now define the collection of colored cones \(\mathbb{F}_3\); it consists of three types of colored cones. Firstly, those in \(\mathbb{F}_2\):
    \begin{enumerate}[label=(\arabic*)]
    \item
        \((\mathcal{C},\mathcal{F})\in\mathbb{F}_2\).
    \end{enumerate}
	And secondly, for each \((\mathcal{C},\mathcal{F})\in\mathbb{F}\), the following genuine colored cones:
    \begin{enumerate}[label=(\arabic*),start=2]
    \item
        \(((\sigma^u)'+\rho,\varphi^u)\) for \((\sigma^u,\varphi^u)\) a genuine colored cone in \(\mathscr{S}^u(\mathcal{C},\mathcal{F})\).
    \item
        \((\sigma_l'+(-\rho),\varphi_l)\) for \((\sigma_l,\varphi_l)\) a genuine colored cone in \(\mathscr{S}_l(\mathcal{C},\mathcal{F})\).
    \end{enumerate}
\end{defin}

\begin{prop}
    \(\mathbb{F}_3\) is a colored fan.
\end{prop}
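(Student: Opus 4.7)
\emph{Proof plan.}
The plan is to verify the two defining properties of a colored fan for $\mathbb{F}_3$: (a) closure under faces, and (b) that each point of the valuation cone $\V'$ lies in the relative interior of at most one cone of $\mathbb{F}_3$. Genuineness of every cone in $\mathbb{F}_3$ is built into the definition: cones of type~(1) are genuine because $\mathbb{F}_2$ is a colored fan (\Cref{F2-colored-fan}); cones of types~(2) and~(3) are only included when the underlying $(\sigma^u,\varphi^u)\in\mathscr{S}^u(\mathcal{C},\mathcal{F})$, respectively $(\sigma_l,\varphi_l)\in\mathscr{S}_l(\mathcal{C},\mathcal{F})$, is itself genuine, and the elementary identity $\mathrm{relint}(\sigma'+\rho)=\mathrm{relint}(\sigma')+\mathrm{relint}(\rho)$ then lifts this genuineness to the Minkowski sum (since $(0,\pm 1)\in\V'=\V\oplus\QQ$). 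The two principal tools are the compatibility statement \Cref{prop:compatible-subdivision} and the preimage description \Cref{prop:preimage-pi}.

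For the face property, I would argue case by case. Faces of a type~(1) cone are in $\mathbb{F}_2\subseteq\mathbb{F}_3$. For a type~(2) cone $((\sigma^u)'+\rho,\varphi^u)$ associated to $(\mathcal{C},\mathcal{F})\in\mathbb{F}$, any face either contains $\rho$ or does not. A face not containing $\rho$ must be selected by a linear functional vanishing on $\rho$, hence is a face of $(\sigma^u)'$, so is a cone in $\mathbb{F}_2$ (and thus of type~(1)). A face containing $\rho$ has the form $\tau'+\rho$ for some face $\tau$ of $\sigma^u$ (with the induced coloring). Taking the minimal face $(\widetilde{\mathcal{C}},\widetilde{\mathcal{F}})\preceq(\mathcal{C},\mathcal{F})$ such that $\tau'\subseteq\widetilde{\mathcal{C}}'$, \Cref{prop:compatible-subdivision} gives $\tau\in\mathscr{S}^u(\widetilde{\mathcal{C}},\widetilde{\mathcal{F}})$, so $\tau'+\rho$ appears as a type~(2) cone of $\mathbb{F}_3$ generated from the face $(\widetilde{\mathcal{C}},\widetilde{\mathcal{F}})$. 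Type~(3) is symmetric via the lower subdivision $\mathscr{S}_l$.

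For the uniqueness property, fix $x\in\V'$ and set $v\coloneq\pi(x)\in\V$. If $v=0$, then $x$ lies on the vertical axis and belongs to the relative interior of exactly one of $\{0\}$, $\rho$, $-\rho$ (all of which are cones of $\mathbb{F}_3$, coming from the trivial cone of $\mathbb{F}$). If $v\neq 0$, then $v$ lies in the relative interior of a unique cone $(\mathcal{C},\mathcal{F})\in\mathbb{F}$. Using \Cref{prop:preimage-pi} one reduces the question to analysing the ``column'' $\pi^{-1}(v)\cap\V'$ within $\pi^{-1}(\mathcal{C})\cap\V'$: by the identity $\mathrm{relint}(\sigma'+\rho)=\mathrm{relint}(\sigma')+\mathrm{relint}(\rho)$ (and the analogous statement with $-\rho$), a type~(2) relative interior point strictly exceeds the lifted value of $\omega$ on its $\pi$-image, a type~(3) one lies strictly below, and a type~(1) one lies on the graph of $\omega$ over $\mathcal{C}$; so at most one of the three kinds can contain $x$ in its relative interior. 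Uniqueness within each kind then follows from the fact that $\mathscr{S}^u(\mathcal{C},\mathcal{F})$ and $\mathscr{S}_l(\mathcal{C},\mathcal{F})$ are genuine polyhedral subdivisions by \Cref{prop:subdivision-of-cone}, together with the uniqueness already established for $\mathbb{F}_2$ in \Cref{F2-colored-fan}.

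The one subtle point—and what I expect to be the main obstacle—is gluing across different maximal cones of $\mathbb{F}$: if $v$ lies in a common face $\widetilde{\mathcal{C}}$ of two cones $\mathcal{C}_1,\mathcal{C}_2\in\mathbb{F}$, one must rule out that the $\mathscr{S}^u(\mathcal{C}_1,\mathcal{F}_1)$-cell and the $\mathscr{S}^u(\mathcal{C}_2,\mathcal{F}_2)$-cell containing $v$ produce two distinct type~(2) cones whose shifts by $\rho$ both contain $x$ in their relative interior. This is precisely the content of \Cref{prop:compatible-subdivision}: both cells restrict to the same cell of $\mathscr{S}^u(\widetilde{\mathcal{C}},\widetilde{\mathcal{F}})$, so the two type~(2) cones of $\mathbb{F}_3$ coincide on $\pi^{-1}(\widetilde{\mathcal{C}})$, and in particular at $x$.
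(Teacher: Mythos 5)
Your proposal is correct and rests on the same ingredients as the paper's own proof (\Cref{rem:equiv-descr-lower-upper-faces}, \Cref{prop:preimage-pi}, \Cref{prop:compatible-subdivision}, and the fan property of \(\mathbb{F}_2\)); the only difference is organisational—the paper runs a pairwise case analysis over the types of two intersecting cones, whereas you fix a point \(x\in\V'\) and analyse its fibre under \(\pi\), which encodes the same trichotomy (strictly above \(\mathcal{C}'\), inside \(\mathcal{C}'\), strictly below \(\mathcal{C}'\)). You additionally spell out the genuineness and face-closure checks that the paper declares clear from the definition, and you correctly identify \Cref{prop:compatible-subdivision} as the point where the subdivisions must be glued across faces of \(\mathbb{F}\).
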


\begin{proof}
	From the definition it is clear that each colored cone in the collection \(\mathbb{F}_3\) is a genuine colored cone, and that a colored face of a colored cone in \(\mathbb{F}_3\) is contained in \(\mathbb{F}_3\).

    It remains to show that two colored cones \((\mathcal{C}_1,\mathcal{F}_1)\), \((\mathcal{C}_2,\mathcal{F}_2)\) in \(\mathbb{F}_3\) which satisfy \(\mathcal{C}_1^\circ\cap\mathcal{C}_2^\circ\cap\V'\neq\emptyset\) must coincide.
    We verify this for all pairs of types of colored cones in \(\mathbb{F}_3\), as appearing in \Cref{def:F3-colored-cones}; we write \((A,B)\) to signify the case where the two cones are of type \(A\) and of type \(B\) respectively.

    \begin{enumerate}
    \item[(1,1)]
        Let \((\mathcal{C}_1,\mathcal{F}_1),(\mathcal{C}_2,\mathcal{F}_2)\in\mathbb{F}_2\).
		Then \(\mathcal{C}_1^\circ\cap\mathcal{C}_2^\circ\cap\V'\neq\emptyset\) implies that \((\mathcal{C}_1,\mathcal{F}_1)=(\mathcal{C}_2,\mathcal{F}_2)\), as \(\mathbb{F}_2\) is a colored fan.
    
    \item[(1,2)]
		Let \((\mathcal{C}_1,\mathcal{F}_1) \in \mathbb{F}_2\), and let \((\sigma^u)'\in\mathbb{F}_2\) be an upper face such that \((\mathcal{C}_2,\mathcal{F}_2)\coloneq((\sigma^u)'+\rho,\mathcal{F}_2)\).
		These colored cones are distinct.
		As \((\sigma^u)'\) is an upper face, by \Cref{rem:equiv-descr-lower-upper-faces}, \({((\sigma^u)'+\rho)}^\circ\cap|\mathbb{F}_2|=\emptyset\).
		Since \(\mathcal{C}_1^\circ\subseteq|\mathbb{F}_2|\), we get \(\mathcal{C}_1^\circ\cap\mathcal{C}_2^\circ\cap\V'=\emptyset\).       
    
    \item[(1,3)]
        This is similar to (1,2).
    
    \item[(2,2)]
		Let \((\sigma_1^u)',(\sigma_2^u)'\) be upper faces of colored cones in \(\mathbb{F}_2\), so that \((\mathcal{C}_i,\mathcal{F}_i)=((\sigma_i^u)'+\rho,\mathcal{F}_i)\).
		Suppose there is some \(x\in{((\sigma_1^u)'+\rho)}^\circ\cap{((\sigma_2^u)'+\rho)}^\circ\cap\V'\).
		Then \(x=\tilde{x}_1+\lambda_1\nu_\rho=\tilde{x}_2+\lambda_2\nu_\rho\) for \(\tilde{x}_1\in(\sigma_1^u)'^\circ, \tilde{x}_2\in(\sigma_2^u)'^\circ\) and without loss of generality \(\lambda_2\ge \lambda_1>0\).
		Here, we write \(\nu_\rho\) for a ray generator of \(\rho\).
		Then since \((\sigma_1^u)'^\circ\) is an upper face, by \Cref{rem:equiv-descr-lower-upper-faces} and \(\tilde{x}_1=\tilde{x}_2+(\lambda_2-\lambda_1)\nu_\rho\), we have \(\lambda_1=\lambda_2\eqcolon\lambda\).
		Therefore, \(x-\lambda\nu_\rho\in(\sigma_1^u)'^\circ\cap(\sigma_2^u)'^\circ\cap\V'\).
		Since \(\mathbb{F}_2\) is a colored fan, \((\sigma_1^u)'=(\sigma_2^u)'\) and \((\mathcal{C}_1,\mathcal{F}_1)=(\mathcal{C}_2,\mathcal{F}_2)\).
        
    \item[(3,3)]
        This is similar to (2,2).
    
    \item[(2,3)]
		Suppose \((\sigma^u)',\sigma_l'\) are upper, respectively lower, faces of colored cones in \(\mathbb{F}_2\), so that \((\mathcal{C}_1,\mathcal{F}_1)=((\sigma^u)'+\rho,\mathcal{F}_1)\) and \((\mathcal{C}_2,\mathcal{F}_2)=(\sigma_l'+(-\rho),\mathcal{F}_2)\).
        Furthermore, suppose that \(x\in\mathcal{C}_1\cap\mathcal{C}_2\cap\V'\neq\emptyset\).

		By definition of \(\mathbb{F}_3\), there are colored cones \((\tau_1,\varphi_1),(\tau_2,\varphi_2)\in\mathbb{F}\) such that \(\sigma^u \in \mathscr{S}^u(\tau_1,\varphi_1)\) and \(\sigma_l \in \mathscr{S}_l(\tau_2,\varphi_2)\).
		There is a face of \(\tau_i\) that contains \(\pi(x)\) in its relative interior.
		This face is then a genuine colored cone, and it is straightfoward to show that it contains \(\sigma^u\) respectively \(\sigma_l\).
		Let us continue to use the notation \((\tau_1,\varphi_1)\) and \((\tau_2,\varphi_2)\) for these faces, so that we have \({(\sigma^u)}^\circ\subseteq\tau_1^\circ\) and \(\sigma_l^\circ\subseteq\tau_2^\circ\).
		Since \(\pi(x)\in\tau_1^\circ\cap\tau_2^\circ\cap\V\), we have \((\tau_1,\varphi_1)=(\tau_2,\varphi_2)\eqcolon(\tau,\varphi)\).
		Then by \Cref{prop:compatible-subdivision}, \(\sigma^u\in\mathscr{S}^u(\tau,\varphi)\) and \(\sigma_l\in\mathscr{S}_l(\tau,\varphi)\).
        
        So there is a linear functional \(\psi\), which is negative in the last coordinate, such that \(\psi((\sigma^u)')=0\), \(\psi(\sigma_l')\ge0\), and \(\psi(\rho)<0\).
		Therefore, \(\psi({((\sigma^u)'+\rho)}^\circ)=\psi((\sigma^u)'^\circ)+\psi(\rho^\circ)<0\), and \(\psi({(\sigma_l'-\rho)}^\circ)=\psi(\sigma_l'^\circ)-\psi(\rho^\circ)>0\).
		So \(\mathcal{C}_1^\circ\cap\mathcal{C}_2^\circ\cap\V'=\emptyset\).
    \end{enumerate} 
    Therefore \(\mathbb{F}_3\) is a colored fan.
\end{proof}

\begin{prop}\noindent\phantomsection\label{prop:F3-complete-fan}
    \(\mathbb{F}_3\) is a complete colored fan.
\end{prop}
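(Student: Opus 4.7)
The goal is to show $\V' \subseteq |\mathbb{F}_3|$, where $\V' = \V \oplus \QQ$ by \Cref{new-valuation-cone}. Fix an arbitrary $v' = (v, t) \in \V'$ with $v \in \V$ and $t \in \QQ$. Since $\mathbb{F}$ is complete, there is some $(\mathcal{C}, \mathcal{F}) \in \mathbb{F}$ with $v \in \mathcal{C}$. The entire argument will be local over the cone $\mathcal{C}$, working with its lift $\mathcal{C}' \in \mathbb{F}_2$ and the associated upper/lower subdivisions from \Cref{prop:subdivision-of-cone}.

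The key geometric observation is that the fiber $\pi^{-1}(v) \cap \mathcal{C}'$ is a non-empty bounded segment $\{v\} \times [a(v), b(v)]$. Non-emptiness follows from $\pi(\mathcal{C}') = \mathcal{C} \ni v$, and boundedness from the fact that every ray generator of $\mathcal{C}'$, namely $(\rho(D), m_D)$ or $(v_i, 1)$, has strictly positive last coordinate while $\mathcal{C}'$ is strictly convex; hence $(0, \pm 1) \notin \mathcal{C}'$. Moreover, the maximum $b(v)$ is attained on an upper face of $\mathbf{A}_{(\mathcal{C}', \mathcal{F}')}$ and the minimum $a(v)$ on a lower face, as in \Cref{rem:equiv-descr-lower-upper-faces}.

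I then split into three cases. If $a(v) \le t \le b(v)$, then $v' \in \mathcal{C}' \subseteq |\mathbb{F}_2| \subseteq |\mathbb{F}_3|$. If $t > b(v)$, then the cells of $\mathscr{S}^u(\mathcal{C}, \mathcal{F})$ cover $\mathcal{C}$, so $v$ lies in the relative interior of a unique cell $(\sigma^u, \varphi^u)$. Because $v \in \V \cap (\sigma^u)^\circ \neq \emptyset$, this cell is a genuine colored cone; hence $((\sigma^u)' + \rho, \varphi^u) \in \mathbb{F}_3$ by \Cref{def:F3-colored-cones}. Writing $v' = (v, b(v)) + (0, t - b(v))$ with $(v, b(v)) \in (\sigma^u)'$ and $t - b(v) > 0$ yields $v' \in (\sigma^u)' + \rho$. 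The case $t < a(v)$ is symmetric, using the lower subdivision $\mathscr{S}_l(\mathcal{C}, \mathcal{F})$ and the ray $-\rho$.

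The main technical point I expect to require care is the observation that whenever $v \in \V$ lies in a cell of $\mathscr{S}^{u/l}(\mathcal{C}, \mathcal{F})$, that cell is genuine and therefore contributes an actual colored cone to $\mathbb{F}_3$; this is what prevents passing to an abstract-face artifact like the one in \Cref{rem:why-include-abstr-faces}. Fortunately, this follows immediately from the valuation $v$ itself witnessing the required interior intersection with $\V$, so no further machinery beyond strict convexity of $\mathcal{C}'$ and \Cref{rem:equiv-descr-lower-upper-faces} is needed.
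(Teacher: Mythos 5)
Your proof is correct and is essentially the paper's own argument: fibre \(\V'\) over the complete fan \(\mathbb{F}\), observe that \(\pi^{-1}(v)\cap\mathcal{C}'\) is a bounded segment whose endpoints lie on upper and lower faces, and place \((v,t)\) in \(\mathcal{C}'\), \((\sigma^u)'+\rho\), or \(\sigma_l'+(-\rho)\) according to the three cases. You are in fact more explicit than the paper on the point that the relevant cell \(\sigma^u\) is a \emph{genuine} colored cone (witnessed by \(v\in\V\cap(\sigma^u)^\circ\)); the only slight imprecision is that excluding \((0,+1)\) from \(\mathcal{C}'\) really uses strict convexity of the projected cone \(\mathcal{C}\) together with the fact that the generators of \(\mathcal{C}'\) project to nonzero vectors, rather than strict convexity of \(\mathcal{C}'\) itself.
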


\begin{proof}
    We show that \(\V'\subseteq|\mathbb{F}_3|\).
    Let \((x,\lambda) \in \V'\), then \(\pi(x,\lambda) = x \in \mathcal{C}\) for some \((\mathcal{C}, \mathcal{F})\in\mathbb{F}\).
	By convexity \((x\times\QQ) \cap \mathcal{C}' = [h^u,h_l]\) with \((x,h_l)\in\sigma'_l\) and \((x,h^u)\in(\sigma^u)'\) for some \(\sigma^u\in\mathscr{S}^u(\mathcal{C},\mathcal{F})\), \(\sigma_l\in\mathscr{S}_l(\mathcal{C},\mathcal{F})\).
    So \((x,\lambda)\) is contained in \(((\sigma^u)'+\rho)\cup|\mathbb{F}_2|\cup(\sigma'_l+(-\rho))\subseteq |\mathbb{F}_3|\).
\end{proof}

We are now prepared to complete the proof of \Cref{prop:Gorenstein algorithm}.

\begin{proof}[Proof of \Cref{prop:Gorenstein algorithm}]
	\Cref{Gorenstein algorithm} terminates as there are finitely many colors, and finitely many abstract colored cones at each stage.
	Since \(|\mathbb{F}_3|=|\mathbb{F}_4|\), by \Cref{prop:F3-complete-fan} and \Cref{prop:star-subdiv}, \(\mathbb{F}_4\) is complete.
	In Step~\eqref{item:final-step}, an abstract colored cone \(\sigma\) in the triangulation of \(\mathbb{F}_4\) is thrown away if and only if its relative interior does not intersect the valuation cone, if and only if \(\sigma\) is an abstract colored cone which is not a genuine colored cone.
	Therefore, the resulting abstract colored fan is a genuine colored fan \(\mathbb{F}' \coloneq\mathbb{F}_5\) which is complete.

	By \Cref{Gorenstein Luna datum}, each colored cone in \(\mathbb{F}'\) is of the form described in \Cref{Q-Gorenstein fan}, so we have constructed the colored fan \(\mathbb{F}'\) of a complete \(\QQ\)-Gorenstein spherical variety \(X'\).
\end{proof}

\section{\texorpdfstring{Toric varieties and \(\widetilde{\wp}\)}{Toric varieties and P}}\noindent\phantomsection\label{sec:multiplicity free}
Let us continue to use the notation and assumptions from above.
In particular, \(X\) denotes a complete \(G\)-spherical variety.
In this section we prove that \(\widetilde{\wp}(X)<1\) if and only if \(X\) is isomorphic to a toric variety, if and only if \(\widetilde{\wp}(X)=0\), which completes the proof of \Cref{main thm}.
Indeed, we show
\[
    \widetilde{\wp}(X) <1 \Rightarrow X\text{ isomorphic to a toric variety} \Rightarrow \widetilde{\wp}(X)=0 \Rightarrow \widetilde{\wp}(X)<1.
\]

We use a technique from~\cite{GH15}, where it was observed that \(\widetilde{\wp}(X)\) only depends on the spherical skeleton, as introduced in \Cref{sec:intro}.

\begin{rem}\noindent\phantomsection\label{toric skeleton}
	There is a natural notion of isomorphism of spherical skeletons (see~\cite[Section~5]{GH15}), and by design \(\widetilde{\wp}(X)=\widetilde{\wp}(\mathscr{R}_X)\) only depends on \(\mathscr{R}_X\) up to isomorphism.
    For completeness and to ensure that this work is self-contained, we briefly recall when two spherical skeletons \(\mathscr{R}_1 = (\Sigma_1, S_1^p, \D_1^a, \Gamma_1, \varsigma_1, \rho'_1)\) and \(\mathscr{R}_2 = (\Sigma_2, S_2^p, \D_2^a, \Gamma_2, \varsigma_2, \rho'_2)\) are considered to be \emph{isomorphic}, written \(\mathscr{R}_1 \cong \mathscr{R}_2\).
    Suppose the underlying root system of \(\mathscr{R}_1\) (resp.~\(\mathscr{R}_2\)) is \(R_1\) (resp.~\(R_2\)).
    We say \(\mathscr{R}_1\) and \(\mathscr{R}_2\) are isomorphic if there exists an isomorphism of root systems \(\varphi_R \colon R_1 \to R_2\) and two bijections \(\varphi_\D \colon \D_1^a \to \D_2^a\) and \(\varphi_\Gamma\colon \Gamma_1 \to \Gamma_2\) such that \(\varphi_R(\Sigma_1) = \Sigma_2\), \(\varphi_R(S_1^p) = S_2^p\), and \(\rho'_1(D) = \rho'_2(\varphi_\D(D))\circ\varphi_R\) on \(\Sigma_1\) for every \(D \in \D_1^a\) as well as \(\rho'_1(D) = \rho'_2(\varphi_\Gamma(D))\circ \varphi_R\) on \(\Sigma_1\) for every \(D \in \Gamma_1\).    
\end{rem}

\begin{rem}\noindent\phantomsection\label{rem:skeleton-equivalence}
	Remark that \(\widetilde{\wp}(\mathscr{R})\) only depends on the representative of spherical skeleton up to the following equivalence relation, introduced in ~\cite[Theorem~6.7]{GH15}.
	For \(\mathscr{R}=(\Sigma,S^p,\mathcal{D}^a,\Gamma)\), define \([\Gamma]\coloneq\{D\in\Gamma\mid\rho'(D)\neq0\in\Lambda^*\}\), and \([\mathscr{R}]\coloneq(\Sigma,S^p,\mathcal{D}^a,[\Gamma])\).
	The aforementioned equivalence relation is then \(\mathscr{R}_1\sim\mathscr{R}_2\) if and only if \([\mathscr{R}_1]\cong[\mathscr{R}_2]\).
\end{rem}

The following theorem is a key result that is required in the proof of \Cref{main thm}.
\begin{thm}[{\cite[Theorem~6.7]{GH15}}]\noindent\phantomsection\label{thm:toric-is-mult-free-space}
	A complete \(G\)-spherical variety \(X\) is isomorphic to a toric variety if and only if \(\mathscr{R}_X \cong \mathscr{R}_V\) for \(V\) a \emph{\(G\)-multiplicity free space}; i.e.~a rational \(G\)-representation with the structure of a \(G\)-spherical variety (see also \Cref{def:mult-free-space}).
\end{thm}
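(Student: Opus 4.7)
The plan is to show that both sides of the equivalence depend only on the spherical skeleton up to the equivalence relation of \Cref{rem:skeleton-equivalence}, and then to exploit the classification of $G$-multiplicity free spaces in order to match them.

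For the implication that $\mathscr{R}_X \cong \mathscr{R}_V$ for some $G$-multiplicity free space $V$ forces $X$ to be isomorphic to a toric variety: since $V$ is a rational $G$-representation, its underlying variety is an affine space, and hence trivially a toric variety. I would transport this property to $X$ via the ungraded Cox ring, which as emphasised in \Cref{sec:intro} is recoverable from the skeleton. A multiplicity free space has polynomial coordinate (and hence Cox) ring, and a complete normal variety with polynomial Cox ring is toric. Therefore $\mathscr{R}_X \cong \mathscr{R}_V$ forces $X$ to have polynomial Cox ring, so $X$ is toric.

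Conversely, if $X$ is isomorphic to a toric variety, the polynomial structure of the Cox ring severely constrains the data of $\mathscr{R}_X$: for instance, the valuations $\rho'(D)$ of the $B$-invariant prime divisors must form the primitive generators of the ray set of a simplicial object compatible with the torus structure, and $\Sigma$, $S^p$, $\mathcal{D}^a$ must fit together accordingly. The goal is to produce an explicit $G$-multiplicity free space $V$ with $\mathscr{R}_X \cong \mathscr{R}_V$. I would invoke the classification of $G$-multiplicity free spaces (Kac, Benson--Ratcliff, Leahy), which presents them in a finite list of families, and for each family read off the associated skeleton. Matching the data $(\Sigma, S^p, \mathcal{D}^a, \Gamma)$ and the maps $(\varsigma, \rho')$ of $\mathscr{R}_X$ against the computed skeletons then picks out the required $V$.

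The main obstacle is this converse direction: producing the appropriate $V$ demands a case-by-case traversal of the classification, together with a careful check that the equivalence relation of \Cref{rem:skeleton-equivalence} (which allows dropping $G$-invariant divisors with trivial $\rho'$) provides exactly the slack needed to realise the match. The feasibility of this is supported by the later sections of the paper, in which similar family-by-family verifications are carried out (see \Cref{sec:multiplicity free} and \Cref{Table}); the combinatorial bookkeeping, though lengthy, should be tractable along these lines.
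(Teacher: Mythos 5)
Note first that the paper does not prove this statement at all: it is imported verbatim from \cite[Theorem~6.7]{GH15}, so there is no internal proof to compare against. Judged on its own merits, the forward implication of your sketch is essentially the known argument: the spherical skeleton determines the ungraded Cox ring, a multiplicity free space is an affine space with trivial class group and hence has polynomial Cox ring, and a complete normal variety with finitely generated free divisor class group whose Cox ring is a polynomial ring is toric. You should, however, cite that last fact explicitly (it is a genuine theorem of Hu--Keel/Berchtold--Hausen type, not a formality) and note that \(\Cl(X)\) is finitely generated for spherical \(X\), so the Cox ring is well defined.

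The converse direction as you propose it has a real gap. You plan to traverse the classification of multiplicity free spaces and ``match'' the data of \(\mathscr{R}_X\) against the computed skeletons, but you give no reason why a matching \(V\) must exist in the list; the assertion that the matching ``picks out the required \(V\)'' is exactly the content of the implication being proved, so the plan is circular unless supplemented by a structural construction. The correct move is direct: if \(X\) is toric then its Cox ring \(\mathcal{R}(X)\) is a polynomial ring, and \(\mathcal{R}(X)\) carries a natural action of \(G\) times the quasitorus \(\Hom(\Cl(X),\CC^*)\) under which \(\operatorname{Spec}\mathcal{R}(X)\) is a spherical module, i.e.\ a multiplicity free space \(V\); the total-coordinate-space description then identifies \(\mathscr{R}_V\) with \(\mathscr{R}_X\) (the \(G\)-invariant divisors of \(X\) accounting for the extra coordinates). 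No case-by-case search is needed. Relatedly, your intermediate claim that the \(\rho'(D)\) ``must form the primitive generators of the ray set of a simplicial object compatible with the torus structure'' is unjustified: a spherical variety can be abstractly isomorphic to a toric variety while its Luna--Vust data bears no resemblance to a fan for the torus action, which is precisely why the detection has to go through the Cox ring rather than through the colored fan.
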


\begin{rem}\noindent\phantomsection\label{rem:toric-crit}
	With \Cref{rem:skeleton-equivalence} and \Cref{thm:toric-is-mult-free-space}, we have the following criterion which detects a toric variety.
	Let \(X\) be a spherical variety that is isomorphic to a toric variety.
	Then, by \Cref{thm:toric-is-mult-free-space}, \(\mathscr{R}_X\cong\mathscr{R}_V\) for \(V\) a multiplicity free space.
	By~\cite[Remark~6.10]{GH15}, under the equivalence relation in \Cref{rem:skeleton-equivalence}, if for some spherical variety \(Y\), \(\mathscr{R}_{Y}\sim\mathscr{R}_X\), then \(\mathscr{R}_Y\) is also the spherical skeleton of a multiplicity free space.
	
\end{rem}

By \Cref{thm:toric-is-mult-free-space}, to prove the equality \(\widetilde{\wp}(X)=0\) when \(X\) is isomorphic to a toric variety, we may prove the corresponding equality for multiplicity free spaces:

\begin{prop}\noindent\phantomsection\label{mult free equality}
	Let \(V\) be a multiplicity free space, then \(\widetilde{\wp}(V)=0\).
\end{prop}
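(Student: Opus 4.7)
The strategy is a case-by-case verification using the classification of multiplicity free spaces. Every multiplicity free $G$-space decomposes as a direct sum of indecomposable multiplicity free spaces under a compatible decomposition of $G$, and the quantity $\widetilde{\wp}$ respects this decomposition: both the combinatorial term $|R^+|-|R_{S^p}^+|$ and the linear program defining the supremum factor additively across a direct sum, since the Luna data of a product of multiplicity free spaces is the disjoint union of the individual Luna data living in orthogonal sub-lattices. Hence it suffices to prove $\widetilde{\wp}(V)=0$ when $V$ is indecomposable.

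By the work of Kac, Benson--Ratcliff, Leahy, and Knop, the indecomposable multiplicity free spaces fall into 42 explicit families (the content of \Cref{Table}). The plan is to proceed family by family as follows. For each family, extract the Luna datum of $V$ from the classification tables, then read off the spherical skeleton $\mathscr{R}_V$: the spherical roots $\Sigma$, the subset $S^p$, the sets of colors $\D^a$ and $G$-invariant divisors $\Gamma$, together with the linear functionals $\rho'$. The anticanonical coefficients $m_D$ are then determined via \Cref{rem:coeffs-anticanonical}.

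Having identified $\mathscr{R}_V$, the constant $|R^+|-|R_{S^p}^+|$ is immediate from the root-system data, and the supremum
\[
\sup\left\{\sum_{D\in\Delta}(m_D-1+\langle\rho'(D), \vartheta\rangle) : \vartheta\in\mathcal{Q}^*_{\mathscr{R}_V}\cap\mathrm{cone}(\Sigma)\right\}
\]
becomes a finite-dimensional linear program on a polytope of low dimension, since $V$ is affine and its colored fan consists of a single maximal colored cone together with its faces. In each case the optimum is attained at an explicit vertex that can be written down, and the value of that optimum matches $|R^+|-|R_{S^p}^+|$ exactly, yielding $\widetilde{\wp}(V)=0$.

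The principal obstacle is bookkeeping rather than mathematical depth: checking 42 families, each with its own root-system data, spherical roots, and explicit linear program, is laborious but mechanical. No new theoretical input is required beyond the classification tables and \Cref{rem:coeffs-anticanonical}, and the verification is naturally presented in tabular form, as anticipated in the paper's plan.
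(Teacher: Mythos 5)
Your proposal follows essentially the same route as the paper: reduce to the indecomposable case, then verify the 42 families from Knop's classification by solving the linear program explicitly (this is exactly \Cref{mult free reduction} plus \Cref{Table}). Two points in your plan are under-specified, though. First, the classification you invoke lists indecomposable multiplicity free spaces only up to saturation and geometric equivalence, so the reduction must also show that saturating (enlarging the central torus of \(G\)) preserves the spherical skeleton and that geometric equivalence preserves \(\widetilde{\wp}\); the latter is not completely formal, since \(V\) carries two spherical actions (by \(G\) and by \(\varrho(G)\)) and one needs \Cref{lemma:normal-subgroup-p} and \Cref{cor:geometric-equivalence-preserves-p} to pass between them. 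Second, and more substantively, your verification step asserts that ``the optimum is attained at an explicit vertex that can be written down'' — but exhibiting a feasible point with objective value \(|R^+|-|R^+_{S^p}|\) only proves \(\widetilde{\wp}(V)\le 0\); certifying that this point is actually the maximiser would require a dual certificate or vertex enumeration in each of the 42 cases. The paper sidesteps this entirely by first proving \(\widetilde{\wp}(V)\ge 0\) for every multiplicity free space (\Cref{appendix prop}, via \cite[Lemma~7.3]{GH15} and \Cref{complete P theorem}), after which a single feasible \(\vartheta\in Q^*\cap\T\) with the right objective value suffices in each case. You should either adopt that a priori lower bound or explain how optimality is certified case by case.
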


\begin{rem}
	Simultaneously and independently to the authors of this paper, in~\cite{bravi2025generalizedmukaiconjecturespherical}, Bravi and Pezzini established that \(\widetilde{\wp}(X)\ge0\) in the special case that \(X\) has a reductive general isotropy group.
	Their approach is a case by case verification of the conjecture on the corresponding spherical skeletons, as done for the symmetric varieties in \cite{GH15}.
	In their work, they show the equality \(\widetilde{\wp}(V)=0\) for \(V\) a multiplicity free space (\Cref{mult free equality} in this paper) by an abstract argument on the spherical skeleton and weight monoid of a multiplicity free space.
	Their approach to prove \Cref{mult free equality} is distinct from the one presented here.
\end{rem}

By the following \Cref{mult free reduction}, it suffices to show this equality for a simpler class of multiplicity free spaces, namely the \emph{indecomposable saturated multiplicity free spaces up to geometric equivalence}.
This class of multiplicity free spaces admits an explicit classification in terms of the associated spherical combinatorial data (see~\cite{KnopMultiplicityFree} and~\cite{GagliardiMultiplicityFree}). 
Before proving \Cref{mult free reduction}, we make some preliminary observations.

\begin{defin}\noindent\phantomsection\label{def:mult-free-space}
	Recall that a \emph{multiplicity free space} is a rational representation \(\varrho\colon G \to \operatorname{GL}(V)\) of \(G\) into a finite-dimensional vector space \(V\) which is also a spherical \(G\)-variety with respect to the induced linear \(G\)-action.
	Let \(G_1\) and \(G_2\) be two connected reductive groups.
	Then two multiplicity free spaces \(\varrho_1\colon G_1 \to \operatorname{GL}(V_1)\) and \(\varrho_2\colon G_2 \to \operatorname{GL}(V_2)\) are said to be \emph{geometrically equivalent} if there is an isomorphism \(\varphi\colon V_1 \to V_2\) of vector spaces, inducing \(\operatorname{GL}(\varphi)\colon \operatorname{GL}(V_1) \to \operatorname{GL}(V_2)\) such that \(\operatorname{GL}(\varphi)(\varrho_1(G_1)) =\varrho_2(G_2)\).

	The multiplicity free space \(V\) is said to be \emph{decomposable} if it is geometrically equivalent to \(\varrho'\colon G_1 \times G_2\to\GL(V_1\oplus V_2)\), where \(G_i\) acts on \(V_i\).
	
	Suppose the multiplicity free space \(V = V_1 \oplus \cdots \oplus V_s\) has \(s\) irreducible summands \(V_i\).
	Then \(V\) is called \emph{saturated} if the dimension of the central torus of \(\varrho(G)\) is equal to \(s\). 
\end{defin}

\begin{rem}
	Notice that by Schur's lemma, a multiplicity free space \(V = V_1 \oplus \cdots \oplus V_s\) is saturated if and only if \((\CC^*)^s\) is contained in the centre of \(\varrho(G)\) and the \(i\)-th \(\CC^*\)-factor acts via scalar multiplication on \(V_i\).
	In particular, the two definitions \cite[Definition in Section~5]{KnopMultiplicityFree} and \cite[Definition~2.3]{Leahy98} are equivalent.

	Furthermore, observe that from the definition of being saturated it is clear that a non-saturated multiplicity free space can be made saturated by enlarging the central torus of \(G\) (ensuring that \(G\) maps onto the central torus of \(\GL(V)\)).
\end{rem}

A technical subtlety when working with multiplicity free spaces \(\varrho\colon G \to \GL(V)\) is that they naturally come equipped with two actions by connected reductive groups, namely the one by \(G\) and one by \(\varrho(G) \subseteq \GL(V)\).
It is an interesting question to study how the spherical skeletons of \(V\) considered as spherical \(G\)- and \(\varrho(G)\)-variety are related.
In general their spherical skeletons need not be isomorphic in the sense above (even when considering them up to equivalence) as their ``ambient root systems'' might not be comparable.
However, there is a natural notion of ``minimal ambient root system'' for a spherical skeleton which would allow us to consider the two spherical skeletons of \(V\) as isomorphic.
This study is rather technical and, as we only need to compare the \(\widetilde{\wp}\) functions of these two ``spherical actions'', the following simpler statement suffices for our purposes.

\begin{lemma}\noindent\phantomsection\label{lemma:normal-subgroup-p}
	Let \(X_{G/H}\hookleftarrow G/H\) be a \(G\)-spherical variety.
	Suppose \(K\trianglelefteq G\) is a closed normal subgroup with \(K\subseteq H\), then for \(\mathfrak{G}\coloneq G/K\) and \(\mathfrak{H}\coloneq H/K\), we have that \(X\hookleftarrow\mathfrak{G}/\mathfrak{H}\) is a \(\mathfrak{G}\)-spherical variety with \(\widetilde{\wp}(X_{G/H})=\widetilde{\wp}(X_{\mathfrak{G}/\mathfrak{H}})\).
\end{lemma}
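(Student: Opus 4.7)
The plan is to observe that $K$ acts trivially on $X$, descend the $G$-action to a $\mathfrak{G}$-action, and then verify that each piece of combinatorial data in the definition of $\widetilde{\wp}$ either transfers unchanged or cancels in a controlled way. For triviality of the $K$-action: for $g \in G$ and $k \in K$, normality gives $kg = gk'$ with $k' \in K \subseteq H$, so $k \cdot gH = gH$; hence $K$ fixes $G/H$ pointwise, and since the $K$-fixed locus in $X$ is closed and contains a dense subset, $K$ acts trivially on $X$. The $G$-action thus factors through $\mathfrak{G} = G/K$, which is connected reductive. Writing $\pi \colon G \to \mathfrak{G}$ for the quotient, $\mathfrak{B} \coloneq \pi(B)$ and $\mathfrak{T} \coloneq \pi(T)$ form a Borel and maximal torus of $\mathfrak{G}$; the $\mathfrak{B}$-orbits on $X$ coincide with the $B$-orbits, so $X$ is $\mathfrak{G}$-spherical.

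Next, I would compare combinatorial data. The set $\Delta$ of $B$-invariant prime divisors equals that for $\mathfrak{B}$. Any $B$-semi-invariant $f \in \CC(X)^{(B)}$ with weight $\chi$ satisfies $\chi|_{B \cap K} = 1$ (as $K$ acts trivially on $\CC(X)$), so $\chi$ factors through $\mathfrak{B}$, yielding $\M_G = \M_{\mathfrak{G}}$ inside $\mathfrak{X}(\mathfrak{B})$; consequently the rank, the dual lattice $\N$, the map $\rho \colon \Delta \to \N$, and the valuation cone $\V$ all coincide. Passing to $K^0$ if necessary (this reduction is harmless, as $K/K^0$ is a finite central subgroup of $G/K^0$ that preserves all root-theoretic data and Borel/torus images), we may assume $K$ is connected, so that $\operatorname{Lie}(G) = \operatorname{Lie}(K) \oplus \mathfrak{m}$ as a direct sum of $T$-stable Lie ideals. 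This yields $R_G = R_K \sqcup R_{\mathfrak{G}}$ orthogonally and $S_G = (S_G \cap R_K) \sqcup S_{\mathfrak{G}}$. For each $\alpha \in S_G \cap R_K$, both root subgroups $U_{\pm\alpha}$ lie in $K$, so $P_\alpha \subseteq B \cdot K$ and $\pi(P_\alpha) = \mathfrak{B}$; thus $P_\alpha$ acts on $X$ through $\mathfrak{B}$ and stabilises every color, whence $\alpha \in S_G^p$. For $\alpha \in S_{\mathfrak{G}}$, $\pi(P_\alpha) = \mathfrak{P}_\alpha$, so $\alpha \in S_G^p$ if and only if $\alpha \in S^p_{\mathfrak{G}}$. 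Hence $S_G^p = (S_G \cap R_K) \sqcup S^p_{\mathfrak{G}}$, and so $\rho_{S_G} - \rho_{S_G^p} = \rho_{S_{\mathfrak{G}}} - \rho_{S^p_{\mathfrak{G}}}$ since the $R_K$-contributions cancel; this gives $\kappa_G = \kappa_{\mathfrak{G}}$, the coefficients $m_D$ agree, and both $Q^*$ and $\T = -\V^\vee$ are unchanged.

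Combining everything, the supremum in \Cref{def: P} depends only on $\M$, $\V$, $\rho$, and the $m_D$, so it takes the same value for both actions, while $\dim X - \rk X$ is intrinsic to the variety. Therefore $\widetilde{\wp}(X_{G/H}) = \widetilde{\wp}(X_{\mathfrak{G}/\mathfrak{H}})$. I expect the main obstacle to be the root-system bookkeeping when passing from $G$ to $\mathfrak{G}$, most crucially the identification $S_G \cap R_K \subseteq S_G^p$ via the observation that the corresponding minimal parabolics act on $X$ through $\mathfrak{B}$, and the corresponding orthogonality decomposition of $R_G$ that makes the $\rho_{S^p}$-cancellation transparent.
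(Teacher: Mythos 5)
Your proof is correct, and its overall skeleton matches the paper's: both arguments descend the action along \(\pi\colon G\to\mathfrak{G}\), identify the character lattice, dual vectors, valuation cone and \(B\)-orbit structure for the two actions, and then reduce everything to showing that the spherical anticanonical divisors (equivalently the coefficients \(m_D\)) agree. Where you genuinely diverge is in that last step. The paper avoids all root-system bookkeeping by invoking the intrinsic characterisation of the canonical section: the generator \(s\) of the anticanonical sheaf on the open \(B\)-orbit is uniquely determined by vanishing to order one along every \(G\)-invariant divisor in every embedding, and since the Luna--Vust data coincide this pins down the same divisor for both actions. You instead compute \(\kappa=2(\rho_S-\rho_{S^p})\) directly, via the reduction to \(K\) connected, the orthogonal decomposition \(R_G=R_K\sqcup R_{\mathfrak{G}}\), and the identity \(S_G^p=(S_G\cap R_K)\sqcup S_{\mathfrak{G}}^p\) (the observation that \(P_\alpha\subseteq BK\) acts through \(\mathfrak{B}\) for \(\alpha\in S_G\cap R_K\) is the right one). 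Your route is more explicit and self-contained but carries more structure-theoretic overhead (connectedness of \(K\), the ideal decomposition of \(\operatorname{Lie}(G)\), matching coroots and color types across the quotient); the paper's route is shorter and sidesteps these verifications entirely at the cost of citing an external characterisation of \(-K_{G/H}\). A small bonus of your version is that the ``\(K\) acts trivially on all of \(X\) because the fixed locus is closed'' argument makes explicit why the \(\mathfrak{G}\)-action exists on the whole embedding \(X\), a point the paper handles only implicitly through the identification of the two Luna--Vust theories.
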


\begin{proof}
	Recall the definition of \(\widetilde{\wp}(X)\):
	\[
		\widetilde{\wp}(X) = \dim X-\rk X-\sup \left\{ \sum_{D\in\Delta}\left(m_D-1+\big\langle\rho(D),\vartheta\big\rangle\right)\,\big|\,\vartheta\in Q^*\cap\T \right\},
	\]
	where \(Q^*\) is uniquely determined by the valuations induced by the \(B\)-invariant divisors, and the coefficients of those \(B\)-invariant divisors in the spherical anticanonical divisor.
	Therefore, in order to show \(\widetilde{\wp}(X_{G/H}) = \widetilde{\wp}(X_{\mathfrak{G}/\mathfrak{H}})\), it suffices to verify that the lattices of weights of semi-invariant rational functions with respect to a Borel subgroup can be identified in such a way that the valuation cones coincide, and the spherical anticanonical divisors are equal.
	In other words, we need to show that the Luna-Vust theory of \(G/H\) and \(\mathfrak{G}/\mathfrak{H}\) coincide, as well as \(-K_{G/H}=-K_{\mathfrak{G}/\mathfrak{H}}\).

	Let \(\pi \colon G \to \mathfrak{G} = G / K\) be the natural quotient, yielding an isomorphism \(\varphi \colon G/H \to \mathfrak{G} / \mathfrak{H}\).
	Fix a maximal torus and Borel subgroup \(T\subseteq B\subseteq G\), then \(\pi(T)\subseteq \mathfrak{B} \coloneqq \pi(B)\subseteq\mathfrak{G}\) are the corresponding maximal torus and Borel subgroup of \(\mathfrak{G}\).
	Let \(\M\coloneq {\CC(G/H)}^{(B)}/\CC^*\) be the character lattice of \(G/H\), and \(\mathfrak{M}\coloneq {\CC(\mathfrak{G/\mathfrak{H}})}^{(\mathfrak{B})}/\CC^*\) be the character lattice of \(\mathfrak{G}/\mathfrak{H}\).
	Then \(\varphi\) induces an isomorphism of character lattices \(\varphi^*\colon \mathfrak{M} \to \M\), and of dual lattices \(\varphi_* \colon \N \to \mathfrak{N}\).
	Under this isomorphism, the valuation cones of \(G/H\) and \(\mathfrak{G}/\mathfrak{H}\) coincide, i.e.\ \(\varphi_*(\V) = \mathfrak{V}\) (use the well-known identification of the valuation cone with \(G\)-invariant (resp.~\(\mathfrak{G}\)-invariant) valuations on \(\CC(G/H)\) (resp.~\(\CC(\mathfrak{G}/\mathfrak{H})\))).
	Finally, \(\varphi\) induces a bijection between the \(B\)-orbits of \(G/H\) and the \(\mathfrak{B}\)-orbits of \(\mathfrak{G}/\mathfrak{H}\) such that for every \(B\)-orbit \(Y\) of \(G/H\), the corresponding \(\mathfrak{B}\)-orbit is \(\varphi(Y)\) and \(\varphi\colon Y \to \varphi(Y)\) is a \(B\)-equivariant isomorphism.
	Therefore, the Luna-Vust theories of \(G/H\) and \(\mathfrak{G}/\mathfrak{H}\) coincide.
	In particular, their spherical embeddings can be naturally identified.

	Next we show that \(-K_{G/H}=-K_{\mathfrak{G}/\mathfrak{H}}\).
	Let \(U\subseteq G/H\) be the open \(B\)-orbit and let \(s\in\Gamma(U,\hat{\omega}_{G/H})\) be a generator such that \(\divv(s) = -K_{G/H}\).
	Notice that \(s\) is uniquely determined up to a constant multiple.
	Above, we have seen that \(\varphi\) induces an isomorphism between the open \(B\)-orbit \(U\) and the open \(\mathfrak{B}\)-orbit \(\varphi(U) \subseteq \mathfrak{G}/\mathfrak{H}\).
	Under this isomorphism \(\hat{\omega}_{G/H}\) gets identified with \(\hat{\omega}_{\mathfrak{G}/\mathfrak{H}}\) and the generator \(s\) corresponds to a generator \(\mathfrak{s} \in \Gamma(\varphi(U), \hat{\omega}_{\mathfrak{G}/\mathfrak{H}})\).
	By \cite[Theorem~1.4]{GorensteinFano}, the generator \(s\) is uniquely determined by the property that it vanishes with order one along each \(G\)-invariant divisor in every spherical embedding \(G/H\hookrightarrow X\).
	As the Luna-Vust data of \(G/H\) and \(\mathfrak{G}/\mathfrak{H}\) coincide, the generator \(\mathfrak{s}\) has the same property, and thus \(-K_{\mathfrak{G}/\mathfrak{H}} = \divv(\mathfrak{s}) = \divv(s) = -K_{G/H}\).
\end{proof}

\begin{cor}\noindent\phantomsection\label{cor:p-fct-mult-free-space}
	Let \(\varrho\colon G \to \GL(V)\) be a multiplicity free space.
	Then the \(\widetilde{\wp}\)-functions of \(V\) considered as a \(G\)- or \(\varrho(G)\)- spherical variety coincide, i.e.\ \(\widetilde{\wp}(V_G) = \widetilde{\wp}(V_{\varrho(G)})\).
\end{cor}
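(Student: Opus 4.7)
The plan is to recognise this as an immediate application of \Cref{lemma:normal-subgroup-p} with \(K \coloneq \ker(\varrho)\). First I would check the hypotheses of the lemma: clearly \(K \trianglelefteq G\) is closed as the kernel of an algebraic group homomorphism. Let \(H \subseteq G\) be the stabiliser of a point \(v \in V\) in the open \(B\)-orbit (so that \(V \hookleftarrow G/H\) is the spherical embedding of \(V\) as a \(G\)-variety). Since every element of \(K\) acts trivially on \(V\), it fixes \(v\), giving \(K \subseteq H\).

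Then I would apply \Cref{lemma:normal-subgroup-p} with \(\mathfrak{G} = G/K\) and \(\mathfrak{H} = H/K\) to conclude \(\widetilde{\wp}(V_G) = \widetilde{\wp}(V_{G/K})\). The first isomorphism theorem for algebraic groups yields a canonical isomorphism \(G/K \xrightarrow{\sim} \varrho(G)\) under which the induced \((G/K)\)-action on \(V\) matches the \(\varrho(G)\)-action on \(V\) by subgroup inclusion in \(\GL(V)\). In particular, \(V_{G/K}\) and \(V_{\varrho(G)}\) are identified as spherical varieties for isomorphic reductive groups, so their \(\widetilde{\wp}\)-values coincide, and the corollary follows.

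There is no real obstacle here; the only point worth spelling out is that the identification of the two spherical structures on \(V\) is strict enough to preserve all data entering the definition of \(\widetilde{\wp}\) (character lattice, valuation cone, \(B\)-invariant divisors and the coefficients \(m_D\)), which is exactly what \Cref{lemma:normal-subgroup-p} already provides. Hence the proof reduces to a single invocation of that lemma together with the identification \(G/\ker(\varrho) \cong \varrho(G)\).
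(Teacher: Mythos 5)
Your proof is correct and matches the paper's argument exactly: the paper also deduces the corollary by applying \Cref{lemma:normal-subgroup-p} to the \(G\)-spherical variety \(V\) with \(K = \ker\varrho\). Your additional verification that \(K \subseteq H\) and the identification \(G/\ker(\varrho) \cong \varrho(G)\) just spell out details the paper leaves implicit.
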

\begin{proof}
	This straightforwardly follows from \Cref{lemma:normal-subgroup-p} applied to the \(G\)-spherical variety \(V\) and \(K = \ker{\varrho}\).
\end{proof}

\begin{cor}\noindent\phantomsection\label{cor:geometric-equivalence-preserves-p}
	Let \(\varrho_1\colon G_1 \to \operatorname{GL}(V_1)\) and \(\varrho_2\colon G_2 \to \operatorname{GL}(V_2)\) be geometrically equivalent multiplicity free spaces.
	Then \(\widetilde{\wp}(V_1)=\widetilde{\wp}(V_2)\).
\end{cor}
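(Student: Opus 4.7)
The plan is to reduce to the case where both multiplicity free spaces are acted upon by their images in $\GL(V_i)$, and then use the isomorphism provided by geometric equivalence to identify the associated Luna--Vust data.

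First, by \Cref{cor:p-fct-mult-free-space} applied to each $\varrho_i$, we have
\[
    \widetilde{\wp}(V_{i,G_i}) = \widetilde{\wp}(V_{i,\varrho_i(G_i)}) \qquad (i=1,2),
\]
so it suffices to prove \(\widetilde{\wp}(V_{1,\varrho_1(G_1)}) = \widetilde{\wp}(V_{2,\varrho_2(G_2)})\). Set \(H_i\coloneq\varrho_i(G_i)\subseteq \GL(V_i)\).

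Next, by the definition of geometric equivalence, there is a vector space isomorphism \(\varphi\colon V_1 \to V_2\) whose induced group isomorphism \(\GL(\varphi)\colon \GL(V_1) \to \GL(V_2)\) satisfies \(\GL(\varphi)(H_1) = H_2\). Thus \(\varphi\) is equivariant in the sense that \(\varphi(h\cdot v) = \GL(\varphi)(h)\cdot \varphi(v)\) for all \(h \in H_1\) and \(v \in V_1\). Choosing a Borel subgroup \(B_1 \subseteq H_1\) and setting \(B_2 \coloneq \GL(\varphi)(B_1) \subseteq H_2\), the map \(\varphi\) restricts to an isomorphism between the open \(B_1\)-orbit of \(V_1\) and the open \(B_2\)-orbit of \(V_2\), and induces a bijection between \(B_1\)-invariant prime divisors of \(V_1\) and \(B_2\)-invariant prime divisors of \(V_2\) that preserves the property of being \(H_i\)-invariant.

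From this, exactly as in the proof of \Cref{lemma:normal-subgroup-p}, \(\varphi\) induces identifications of the character lattices \(\M_1 \cong \M_2\), of the valuation cones \(\V_1 \cong \V_2\), and of the set of \(B_i\)-invariant prime divisors together with their associated dual vectors \(\rho(D)\). Since the canonical bundle is preserved by equivariant isomorphisms and Brion's distinguished representative is characterised intrinsically by its order of vanishing along \(H_i\)-invariant divisors in all embeddings (see \cite[Theorem~1.4]{GorensteinFano}), the coefficients \(m_D\) are also matched. Because \(\widetilde{\wp}\) is defined entirely in terms of this Luna--Vust data (cf.\ \Cref{def: P}), we conclude \(\widetilde{\wp}(V_{1,H_1}) = \widetilde{\wp}(V_{2,H_2})\), completing the proof.

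The only mild subtlety is that the equivariance of \(\varphi\) is with respect to the group isomorphism \(\GL(\varphi)\), not a fixed group action; but since \(\widetilde{\wp}\) depends only on the intrinsic combinatorial invariants, this causes no difficulty.
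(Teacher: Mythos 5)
Your proposal is correct and follows essentially the same route as the paper: reduce to the image groups \(\varrho_i(G_i)\) via \Cref{cor:p-fct-mult-free-space}, then observe that geometric equivalence yields an equivariant isomorphism identifying the two spherical varieties, from which the equality of \(\widetilde{\wp}\) follows. The extra detail you supply on matching the Luna--Vust data and the anticanonical coefficients is exactly what the paper leaves implicit in its ``the statement directly follows''.
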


\begin{proof}
	By \Cref{cor:p-fct-mult-free-space}, we may consider \(V_i\) as \(\varrho(G_i)\)-spherical variety.
	Notice the assumption that \(V_1\) and \(V_2\) are geometrically equivalent implies there is an isomorphism \(\varrho(G_1) \cong \varrho(G_2)\) of algebraic groups and a linear isomorphism \(V_1 \cong V_2\) which is equivariant with respect to the identification \(\varrho(G_1) \cong \varrho(G_2)\).
	Now the statement directly follows.
\end{proof}

\begin{prop}\noindent\phantomsection\label{mult free reduction}
	Suppose that for each indecomposable saturated multiplicity free space \(V'\), we have \(\widetilde{\wp}(V')=0\).
	Then \(\widetilde{\wp}(V)=0\) for every multiplicity free space \(V\).
\end{prop}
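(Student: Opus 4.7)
The plan is to reduce a general multiplicity free space to indecomposable saturated ones through two successive reductions, each preserving \(\widetilde{\wp}\). First, for saturation: given \(\varrho \colon G \to \GL(V)\) multiplicity free, by \Cref{cor:p-fct-mult-free-space} we may assume \(G \subseteq \GL(V)\) is faithfully embedded. Writing \(V = V_1 \oplus \cdots \oplus V_s\) as irreducible \(G\)-summands, let \(T' = (\CC^*)^s \subseteq \GL(V)\) be the torus acting by scalars on each \(V_i\), and set \(\tilde G \coloneqq G \cdot T'\). Since \(G\) preserves each \(V_i\), the torus \(T'\) commutes with \(G\) and therefore lies in the centre of \(\tilde G\), so \(V\) is saturated multiplicity free under the reductive group \(\tilde G\).

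The key technical claim is that \(\widetilde{\wp}(V_G) = \widetilde{\wp}(V_{\tilde G})\). Since \(T'\) is central, the root system, simple roots, and \(S^p\) are unchanged, and from \(\dim V - \rk V = |R^+| - |R_{S^p}^+|\) the first two terms of \(\widetilde{\wp}\) agree. Fixing a Borel \(B \subseteq G\), the subgroup \(\tilde B \coloneqq B T'\) is a Borel of \(\tilde G\). The unique open \(B\)-orbit of \(V\) (existing by sphericality) is \(T'\)-invariant, being the unique member of a singleton that \(T'\) preserves, so it coincides with the open \(\tilde B\)-orbit. Because the connected torus \(T'\) permutes the finite set of \(B\)-invariant prime divisors, it must fix them pointwise, so \(\Delta\) is the same under both actions, as is the partition into colors and \(G\)- versus \(\tilde G\)-invariant divisors. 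Coefficients \(m_D\) are given by the formulas of \Cref{rem:coeffs-anticanonical} and depend only on unchanged data. The valuations \(\rho(D)\) are intrinsic to \(V\), so restrict to equal linear functionals on the common \(\QQ\)-span \(\Lambda_\QQ\) of the spherical roots, making the supremum term identical.

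Next, for decomposition: if \(V\) under \(\tilde G\) is decomposable, it is geometrically equivalent to \(V_1 \oplus V_2\) with \(G_1 \times G_2\) acting accordingly, and by \Cref{cor:geometric-equivalence-preserves-p}, \(\widetilde{\wp}\) is preserved by this equivalence. Product spherical varieties have Luna-Vust data that decomposes as direct sums—character lattice, valuation cone, colours, \(G\)-invariant divisors, and anticanonical coefficients all split—from which a direct computation yields \(\widetilde{\wp}(V_1 \oplus V_2) = \widetilde{\wp}(V_1) + \widetilde{\wp}(V_2)\). Each \(V_i\) is saturated under \(G_i\): if \(V_i\) has \(s_i\) irreducible summands, then the central torus of \(G_1 \times G_2\) has dimension \(s = s_1 + s_2\), forcing \(\dim Z(G_i) = s_i\). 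Iterating on the number of irreducible summands reduces the claim to the indecomposable saturated case, where \(\widetilde{\wp} = 0\) by hypothesis, concluding \(\widetilde{\wp}(V) = 0\).

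I expect the main obstacle to be the preservation claim \(\widetilde{\wp}(V_G) = \widetilde{\wp}(V_{\tilde G})\) in the saturation step. While the divisor-theoretic data is unchanged by connectedness arguments and the root datum is preserved by centrality of \(T'\), the weight lattice \(\M\) may strictly enlarge to \(\tilde \M\) (characters of \(\tilde B\) can distinguish weights that restrict trivially to \(B\) on the \(T'\)-factor), so a priori the normalisation of the spherical roots in \(\Sigma\) as primitive lattice elements could be rescaled. However, the combinatorial ingredients of \(\widetilde{\wp}\)—the rational cone \(\cone(\Sigma) \subseteq \Lambda_\QQ\), the coefficients \(m_D\), and the linear functionals \(\rho'(D)\)—are all intrinsic to \(V\) together with its distinguished open Borel orbit, and thus remain unchanged at the rational level where the supremum is computed.
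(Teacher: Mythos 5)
Your proposal is correct and follows essentially the same route as the paper: first saturate by enlarging the group by a central torus (the paper uses \(G\times(\CC^*)^k\), you use \(\varrho(G)\cdot T'\) inside \(\GL(V)\), which amounts to the same thing) while checking \(\widetilde{\wp}\) is unchanged, then handle decomposability via geometric equivalence and the additivity \(\widetilde{\wp}(V_1\oplus V_2)=\widetilde{\wp}(V_1)+\widetilde{\wp}(V_2)\) coming from the product structure of the Luna data, and iterate. The only difference is that you spell out in detail why the saturation step preserves the combinatorial data (open orbit, \(B\)-invariant divisors, \(m_D\), \(\rho(D)\), and the weight lattice up to canonical identification), which the paper dismisses with ``it is clear that adding torus factors preserves the spherical skeleton.''
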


\begin{proof}
	Let \(V=V_1 \oplus \dots \oplus V_s\) be a \(G\)-multiplicity free space with \(s\) irreducible summands \(V_i\).
	By Schur's lemma, the dimension of the central torus of \(\varrho(G)\) is not greater than \(s\).
	Then by replacing \(G\) with \(G' = G \times {(\CC^*)}^k\), for a suitable \(k\ge0\), we may obtain a saturated \(G'\)-multiplicity free space \(\widetilde{V}\) (the same linear space \(V\) with an ``enlarged action'').
	It is clear that adding torus factors preserves the spherical skeleton, i.e.\ \(\mathscr{R}_{\widetilde{V}}=\mathscr{R}_{V}\), so in particular \(\widetilde{\wp}(\widetilde{V})=\widetilde{\wp}(V)\).
	So we may assume that \(V\) is saturated.

	Now let us suppose that \(V\) is decomposable.
	Then by \Cref{cor:geometric-equivalence-preserves-p}, we may assume that \(G=G_1\times G_2\) and \(V=V_1\oplus V_2\), with \(G_i\) acting on \(V_i\).
	Notice that from a spherical geometry point of view \(V\) is a product of two spherical varieties \(G_1/H_1 \hookrightarrow V_1\) and \(G_2/H_2 \hookrightarrow V_2\).
	Therefore, the Luna datum of \(V\) decomposes into a direct product of the two Luna data of \(V_1\) and \(V_2\) (see, for instance, \cite[Appendix]{Bravi13}.
	It is now straightforward to verify that
	\[
		\widetilde{\wp}(V)=\widetilde{\wp}(V_1)+\widetilde{\wp}(V_2).
	\] 
	So by \Cref{cor:geometric-equivalence-preserves-p}, showing \(\widetilde{\wp}(V')=0\) for every saturated multiplicity free space up to geometric equivalence, implies that \(\widetilde{\wp}(V)=0\).
\end{proof}

\begin{proof}[Proof of \Cref{mult free equality}]
    In \Cref{Table}, the equality \(\widetilde{\wp}(V)=0\) is established for the indecomposable saturated multiplicity free spaces \(V\) up to geometric equivalence, by explicitly solving the linear program \(\widetilde{\wp}(V)\) in each of the cases in~\cite{KnopMultiplicityFree}.
    By \Cref{mult free reduction}, this establishes the equality in general.
\end{proof}

This completes the proof of \Cref{main thm}.

\begin{proof}[Proof of \Cref{main thm}]
    By \Cref{complete P theorem}, it remains to show:
    \[
		\widetilde{\wp}(X) <1 \Rightarrow X\text{ isomorphic to a toric variety} \Rightarrow \widetilde{\wp}(X)=0 \Rightarrow \widetilde{\wp}(X)<1.
    \]

	The strategy of the proof is to apply \Cref{Gorenstein algorithm} to \(X\) to obtain a complete \(\QQ\)-Gorenstein spherical variety \(X'\). Then we use \Cref{Spherical Mukai}. The proof follows by studying how \Cref{Gorenstein algorithm} interacts with the spherical skeletons of \(X\) and \(X'\).

    Firstly, suppose \(\widetilde{\wp}(X)<1\).
	Then by \Cref{Spherical Mukai}, \(X'\) is isomorphic to a toric variety, and thus, by \Cref{thm:toric-is-mult-free-space}, \(\mathscr{R}_{X'} \cong \mathscr{R}_{V'}\) for a multiplicity free space \(V'\).
	We need to prove that \(X\) is isomorphic to a toric variety.
    To do this, we study how \Cref{Gorenstein algorithm} interacts with the spherical skeleton of \(X\).
	It is clear that the modifications in \Cref{Gorenstein algorithm} do not change the equivalence class (see \Cref{rem:skeleton-equivalence}) of the spherical skeleton \(\mathscr{R}_X\sim\mathscr{R}_{X'}\).
	Therefore, by \Cref{rem:toric-crit}, \(\mathscr{R}_X\cong \mathscr{R}_V\) for a multiplicity free space \(V\).
	Hence, by \Cref{thm:toric-is-mult-free-space}, \(X\) is isomorphic to a toric variety.

    Secondly, we need to prove that \(\widetilde{\wp}(X)=0\) if \(X\) is isomorphic to a toric variety.
	By~\Cref{thm:toric-is-mult-free-space}, \(X\) being isomorphic to a toric variety is equivalent to \(\mathscr{R}_X \cong \mathscr{R}_V\) for a multiplicity free space \(V\).
	Hence, \(\widetilde{\wp}(X) = \widetilde{\wp}(V)=0\) by \Cref{mult free equality}.
\end{proof}

\section{Examples}\noindent\phantomsection\label{sec:examples}
In this section we illustrate \Cref{smoothness theorem}.
We exhibit an example of a singular locally factorial simple embedding. This is a non-toric phenomenon, as for toric varieties smoothness coincides with local factoriality. 

\begin{example} Consider the variety of smooth conics \(G/H=\mathrm{SL}_3/Z(\mathrm{SL}_3)\mathrm{SO}_3\) (cf.~\cite[Table~A]{Wasserman}).
	We have \(\M=\ZZ(2\alpha_1)\oplus\ZZ(2\alpha_2)\), two colors \(D_1\), \(D_2\) of type 2a with spherical roots \(\Sigma=\{2\alpha_1, 2\alpha_2\}\), and dual vectors \(\rho(D_1)=(2,-1)\), \(\rho(D_2)=(-1,2)\in\N\).
	The simple embedding \(X\) with colored cone \((\mathrm{cone}((-1,0),(2,-1)),\{D_1\})\) is the parameter space \(\PP^5\) of all conics, with one spherical boundary divisor \(X_1\), and \(\Delta=\{X_1,D_1,D_2\}\).
	The polytope \(Q^*\) is the triangle \(\mathrm{conv}((-1,-1),(1,0),(1,3))\), and the tailcone \(\mathcal{T}=\mathrm{cone}(\Sigma)\) is the positive orthant of \(\M_\QQ\).
        As a demonstration, we apply \Cref{smoothness theorem} to the unique closed \(G\)-orbit \(Y\) of \(\PP^5\hookleftarrow G/H\). 
        The spherical skeleton of \(\PP^5\hookleftarrow G/H\) is
        \[
            \mathscr{R}=(\Sigma=\{2\alpha_1,2\alpha_2\},S^p=\emptyset,\D^a=\emptyset,\Gamma=\{X_1\}).
        \] 
        The \(B\)-invariant prime divisors \(D\) with \(Y\subseteq \overline{D}\) are \(D\in I=\{X_1,D_1\}\), the new simple root is \(S_I=\{\alpha_1\}\), and the localisation of \(\mathscr{R}\) at \(I\) is \[\mathscr{R}_I=(\Sigma_I=\{2\alpha_1\},S^p_I=\emptyset,\D^a=\emptyset,\Gamma_I=\{X_1\}).\] 
        
		Then \(\rho_I'(D_1)=2\in\Lambda^*_I\), and \(\rho_I'(X_1)=-1\in\Lambda^*_I\). Since \(D\) is not of type b, we have \((m_D)_I=(m_{X_1})_I=1\), so \(\mathcal{Q}^*_{\mathscr{R}_I}\cap\cone(\Sigma_I)=[0,1]\subset {(\Lambda_I)}_\QQ\).
        Finally, \(\widetilde{\wp}(\mathscr{R}_I)=0\), so by \Cref{smoothness theorem}, \(\PP^5\) is smooth along \(Y\) as expected. 
\end{example}

\begin{example}[A singular locally factorial variety]
    Let \(X\) be a simple spherical embedding with colored cone \((\mathcal{C},\mathcal{F})\), with no colors taken \(\mathcal{F}=\emptyset\), and \(\mathcal{C}\) a smooth cone.
    Then by \Cref{smoothness theorem}, \(X\) is smooth.
    However, when \(\mathcal{F}\neq\emptyset\), \(X\) can be singular.
    
    For example, consider the spherical homogeneous space \(G/H=(\SL_2\times \CC^*)/T'\times\SL_2/T\), with \(T'\) the diagonal torus of \(\SL_2\), acting with character \(\omega\) on \(\CC^*\), and \(T\) the diagonal torus of \(\SL_2\). It has \(\N_\QQ\cong\QQ^3\), \((\rho(D)\mid D\in\D)=((1,0,0),(0,1,0),(0,0,1),(0,0,1))\), and its valuation cone is \(\V=\cone((1,-1,0),(-1,1,0),(-1,-1,0),(0,0,-1))\).
    Let \(G/H\hookrightarrow X\) be the simple spherical embedding whose colored cone has rays \((-1,-1,-1),(1,0,0),(0,1,0)\). This is a smooth cone, with one color taken, so \(X\) is locally factorial. We compute
    \(|R^+|-|R_{S^p}^+|=2\), \(\sum_{D\in\Delta}(m_D-1)=0\), and \(\sum_{D\in\Delta}\rho(D)=(0,0,1)\). From this, we compute \(\widetilde{\wp}(X)=1\), therefore by \Cref{smoothness theorem}, \(X\) is locally factorial but not smooth, and such a situation does not occur in toric geometry. 
\end{example}


\section{\texorpdfstring{\(\widetilde{\wp}\) for the indecomposable saturated multiplicity free spaces}{p for the indecomposable saturated multiplicity free spaces.}}\noindent\phantomsection\label{Table}
In this section, we show that \(\widetilde{\wp}(V)=0\) for \(V\) an indecomposable saturated multiplicity free space up to geometric equivalence, using Knop's exhaustive list~\cite[Section~5]{KnopMultiplicityFree} of these spherical varieties.
In each case, we find an explicit value \(\vartheta\in Q^*\cap\T\) which evaluates in the linear program \(\widetilde{\wp}(V)\) to zero.
Then the following \Cref{appendix prop} implies that \(\widetilde{\wp}(V)=0\).

\begin{prop}\noindent\phantomsection\label{appendix prop}
	Let \(V\) be a multiplicity free space, then \(\widetilde{\wp}(V)\ge0\).
\end{prop}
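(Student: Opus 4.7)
The plan is to deduce the inequality \(\widetilde{\wp}(V)\ge0\) from \Cref{complete P theorem} by embedding \(V\) as an open \(G\)-equivariant subvariety of a complete spherical variety \(X\) for which the linear programs defining \(\widetilde{\wp}(V)\) and \(\widetilde{\wp}(X)\) coincide.

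First, I would invoke \Cref{rem:skeleton-equivalence}: the function \(\widetilde{\wp}\) depends only on the equivalence class of the spherical skeleton, and in particular any \(G\)-invariant divisor \(D\) with \(\rho'(D)=0\), equivalently with \(\rho(D)\in\Lambda^\perp\subseteq\N\), can be added or removed without affecting \(\widetilde{\wp}\). The core observation powering the reduction is that the tail cone satisfies \(\T=\cone(\Sigma)\subseteq\Lambda_\QQ\), so any \(\rho(D')\in\Lambda^\perp\) pairs trivially with every \(\vartheta\in\T\). Consequently, if \(V\hookrightarrow X\) is a \(G\)-equivariant open embedding into a complete spherical variety over the same spherical homogeneous space \(G/H\), and every \(G\)-invariant divisor lying in \(X\setminus V\) has \(\rho\in\Lambda^\perp\), then both the constraint region \(Q^*\cap\T\) and the objective function of the linear program are unchanged in passing from \(V\) to \(X\). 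Combined with \(\dim V=\dim X\) and \(\rk V=\rk X\), this forces \(\widetilde{\wp}(V)=\widetilde{\wp}(X)\).

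Second, I would construct such an \(X\) combinatorially via Luna-Vust theory. Brion's characterization of the valuation cone gives \(\V=\{v\in\N_\QQ:\langle v,\gamma\rangle\le0\text{ for all }\gamma\in\Sigma\}\), whose lineality space is exactly \(\Lambda^\perp\); in particular \(\Lambda^\perp\subseteq\V\). The colored fan of the affine spherical variety \(V\) consists of a single maximal colored cone \((\mathcal{C},\mathcal{F})\) together with its colored faces. I would extend this to a complete colored fan covering \(\V\) by adjoining new colored cones with empty coloring whose \(G\)-invariant rays are chosen from \(\Lambda^\perp\cap\N\), exploiting the lineality to triangulate the portion of \(\V\) not already covered by \(\mathcal{C}\). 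The resulting complete spherical variety \(X\) then satisfies the hypotheses of the preceding paragraph, and \Cref{complete P theorem} yields \(\widetilde{\wp}(X)\ge0\), whence \(\widetilde{\wp}(V)=\widetilde{\wp}(X)\ge0\).

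The principal obstacle is the combinatorial construction of the completion in the second step: when \(V\) is non-horospherical, the strictly convex quotient \(\V/\Lambda^\perp\) is nontrivial, and a priori the part of \(\V\) left uncovered by \(\mathcal{C}\) need not be triangulable using only rays in \(\Lambda^\perp\). Overcoming this would likely require exploiting finer structural features specific to multiplicity free spaces, such as their decomposition into irreducible summands, the action of the central torus, or the shape of their weight monoids, to show that the uncovered region always admits such a completion. An alternative strategy, developed independently by Bravi and Pezzini in~\cite{bravi2025generalizedmukaiconjecturespherical} and mentioned in the paper, establishes the inequality directly at the level of the spherical skeleton and weight monoid, bypassing the construction of an explicit geometric completion altogether.
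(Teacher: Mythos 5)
Your reduction in the first paragraph is sound and is, in spirit, exactly what the paper does: \(\widetilde{\wp}\) only sees the spherical skeleton up to the equivalence of \Cref{rem:skeleton-equivalence}, a \(G\)-invariant divisor contributes trivially to the linear program precisely when \(\rho(D)\in\Lambda^\perp\) (since \(m_D=1\) and \(\T=\cone(\Sigma)\subseteq\Lambda_\QQ\)), and once one has a complete spherical variety with the same (equivalence class of) skeleton as \(V\), the inequality follows from \Cref{complete P theorem}. The problem is your second step. The existence of a completion of \(V\) whose new boundary divisors all map into \(\Lambda^\perp\) — equivalently, the statement that the spherical skeleton of a multiplicity free space is (equivalent to) that of a complete spherical variety — is the entire mathematical content of the proposition, and you do not establish it; you correctly flag it yourself as ``the principal obstacle.'' In particular, your sketch of triangulating \(\V\setminus\mathcal{C}\) by cones of the form \(\tau+\sigma\) with \(\tau\preceq\mathcal{C}\) and \(\sigma\subseteq\Lambda^\perp\) already presupposes \(\V\subseteq\mathcal{C}+\Lambda^\perp\), i.e.\ that the image of \(\mathcal{C}\) in \(\N_\QQ/\Lambda^\perp\) contains the reduced valuation cone; this is a nontrivial property of the colored cone of a multiplicity free space and is not argued.

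The paper closes this gap simply by citing \cite[Lemma~7.3]{GH15}, which states that the spherical skeleton of a multiplicity free space is isomorphic to the spherical skeleton of a complete spherical variety; the proof is then one line. So your strategy is the right one, but as written the proposal is incomplete: you must either invoke that lemma or supply a proof of the completion statement, which requires the finer structural input about multiplicity free spaces that you allude to but do not provide.
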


\begin{proof}
	By~\cite[Lemma~7.3]{GH15}, the spherical skeleton of a multiplicity free space is isomorphic to the spherical skeleton of a complete spherical variety.
	The inequality now follows from \Cref{complete P theorem}.
\end{proof}

By \Cref{appendix prop}, this explicit value \(\vartheta_{\mathrm{argmax}}\coloneq\vartheta\) solves the linear program \(\wp(V)=\dim V-\rk V-\widetilde{\wp}(V)\) and establishes the equality \(\widetilde{\wp}(V)=0\).
In the following table, the value of \(\vartheta_{\mathrm{argmax}}\) is written in terms of the spherical roots for each of the multiplicity free spaces in Knop's list.
In each case, we denote by \(\lambda_i\), the \(i\)th spherical root following the enumeration in~\cite{KnopMultiplicityFree}.
In~\cite{GagliardiMultiplicityFree}, Gagliardi has computed the Luna diagram of each of these multiplicity free spaces, and throughout, we follow the numbering of cases used in that paper. 

Let \(V\) be a \(G\)-multiplicity free space, then to compute \(\vartheta_{\mathrm{argmax}}\) we use the following.
Since \(V\) is an affine spherical variety, it is a simple spherical embedding with colored cone \((\mathcal{C},\mathcal{F})\) by~\cite[Theorem~6.7]{Knop2012}.
Since the origin is a \(G\)-fixed point of \(V\), by the orbit-cone correspondence~\cite[Theorem~6.6]{Knop2012}, \((\mathcal{C},\mathcal{F})\) is a colored cone of maximal dimension.
Moreover, as \(V\) is factorial, the primitive ray generators of rays \(\{\rho(D)\,|\,D\in\Delta\}\) of its colored cone form a \(\ZZ\)-basis of \(\N\).
Furthermore, the \(B\)-weights \(\chi\) of each \(B\)-invariant principal divisor \(D=\divv(f_\chi)\) form the corresponding dual basis \(\{\chi_D\,|\,D\in\Delta\}\) of \(\M\).
In this basis, the objective function of the linear program \(\wp(V)\) is the sum of the components of a vector, and \(Q^*=(-m_{D}\,|\,D\in\Delta)+{(\M_\QQ)}_{\ge0}\) is a translation of the positive orthant.
Here, \((-m_D \mid D \in \Delta)\) denotes the point in \(\M\) which has ``\(D\)-th'' coordinate equal to \(-m_D\).
Writing each spherical root \(\lambda_i\) in terms of the \(\chi_D\), in each case we find a value \(\vartheta_{\mathrm{argmax}}=\sum_i a_i\lambda_i\) such that the coefficients \(a_i\) satisfy a linear recurrence.


\subsection{Explicit values of \texorpdfstring{\(\vartheta_{\mathrm{argmax}}\)}{vartheta\_argmax}}
The following list displays the value \(\vartheta_{\mathrm{argmax}}\) which solves the linear program \(\widetilde{\wp}(X)\) for each of the indecomposable saturated multiplicity free spaces \(X\) up to geometric equivalence.

For convenience, denote by \(\delta_k\) the value of \(k\) modulo \(2\). 
\begin{enumerate}[label=(\arabic*)]
\setlength{\itemsep}{1em}

\item
    \(\SL_n\times\CC^*\) on \(\CC^n\) with \(n\ge2\),

    \nopagebreak\(\vartheta_{\mathrm{argmax}}=0\).

\item
    \(\Sp_{2n}\times\CC^*\) on \(\CC^{2n}\) with \(n\ge2\),

    \nopagebreak\(\vartheta_{\mathrm{argmax}}=0\).

\item
    \(\mathrm{Spin}_{2n+1}\times\CC^*\) on \(\CC^{2n+1}\) with \(n\ge2\),

    \nopagebreak\(\vartheta_{\mathrm{argmax}}=\lambda_1\).

\item
    \(\mathrm{Spin}_{2n}\times\CC^*\) on \(\CC^{2n}\) with \(n\ge3\),

    \nopagebreak\(\vartheta_{\mathrm{argmax}}=\lambda_1\).

\item
    \(\SL_n\times\CC^*\) on \(S^2(\CC^n)\) with \(n\ge2\),

    \nopagebreak\(\vartheta_{\mathrm{argmax}}=\sum_{k=1}^{n-1}\frac{1}{2}k(k+1)\lambda_{n-k}\).

\item
    \(\SL_n\times\CC^*\) on \(\Lambda^2(\CC^n)\) with \(n\ge5\) odd,

    \nopagebreak\(\vartheta_{\mathrm{argmax}}=\sum_{k=1}^{\lfloor n/2\rfloor-1}k(2k+1)\lambda_{\lfloor n/2\rfloor-k}\).

\item
    \(\SL_n\times\CC^*\) on \(\Lambda^2(\CC^n)\) with \(n\ge6\) even,

    \nopagebreak\(\vartheta_{\mathrm{argmax}}=\sum_{k=1}^{n/2-1}k(2k-1)\lambda_{n/2-k}\).

\item
    \(\SL_n\times\SL_n\times\CC^*\) on \(\CC^n\otimes\CC^n\) with \(n\ge2\),

    \nopagebreak\(\vartheta_{\mathrm{argmax}}=\sum_{k=1}^{n-1}k^2\lambda_{n-k}\).

\item
    \(\SL_n\times\SL_{n'}\times\CC^*\) on \(\CC^n\otimes\CC^{n'}\) with \(n>n'\ge2\),

    \nopagebreak\(\vartheta_{\mathrm{argmax}}=\sum_{k=1}^{n-1}k(k+n-n')\lambda_{n-k}\).

\item
    \(\SL_2\times\Sp_{2n'}\times\CC^*\) on \(\CC^2\otimes\CC^{2n'}\) with \(n'\ge2\),

    \nopagebreak\(\vartheta_{\mathrm{argmax}}=(2n'-1)\lambda_1+\lambda_2\).

\item
    \(\SL_3\times\Sp_{4}\times\CC^*\) on \(\CC^3\otimes\CC^{4}\) with \(n'\ge2\),

    \nopagebreak\(\vartheta_{\mathrm{argmax}}=8\lambda_1+3\lambda_2+6\lambda_3+2\lambda_4\).

\item
    \(\SL_3\times\Sp_{2n}\times\CC^*\) on \(\CC^3\otimes\CC^{2n'}\) with \(n'\ge3\),

    \nopagebreak\(\vartheta_{\mathrm{argmax}}=4n\lambda_1+(2n-1)\lambda_2+3\lambda_3+(4n-2)\lambda_4+(2n-2)\lambda_5\).

\item
    \(\SL_4\times\Sp_4\times\CC^*\) on \(\CC^4\otimes\CC^4\),

    \nopagebreak\(\vartheta_{\mathrm{argmax}}=12\lambda_1+5\lambda_2+9\lambda_3+4\lambda_4+\lambda_5\).

\item
    \(\SL_n\times\Sp_4\times\CC^*\) on \(\CC^n\otimes\CC^4\) with \(n\ge5\),

    \nopagebreak\(\vartheta_{\mathrm{argmax}}=(4n-4)\lambda_1+(2n-3)\lambda_2+(3n-3)\lambda_3+(2n-4)\lambda_4+(n-3)\lambda_5\).

\item
    \(\mathrm{Spin}_7\times\CC^*\) on \(\CC^8\),

    \nopagebreak\(\vartheta_{\mathrm{argmax}}=\lambda_1\).

\item
    \(\mathrm{Spin}_9\times\CC^*\) on \(\CC^{16}\),

    \nopagebreak\(\vartheta_{\mathrm{argmax}}=\lambda_1+5\lambda_2\).

\item
    \(\mathrm{Spin}_{10}\times\CC^*\) on \(\CC^{16}\),

    \nopagebreak\(\vartheta_{\mathrm{argmax}}=5\lambda_1\).

\item
    \(G_2\times\CC^*\) on \(\CC^7\),

    \nopagebreak\(\vartheta_{\mathrm{argmax}}=\lambda_1\).

\item
    \(E_6\times\CC^*\) on \(\CC^{27}\),

    \nopagebreak\(\vartheta_{\mathrm{argmax}}=10\lambda_1+\lambda_2\).

\item
    \(\mathrm{Spin}_8\times\CC^*\) on \(\CC^8\oplus\CC^8\),

    \nopagebreak\(\vartheta_{\mathrm{argmax}}=\lambda_1+\lambda_2\).

\item
	\(\SL_2\times{(\CC^*)}^2\) on \(\CC^2\oplus\CC^2\),

    \nopagebreak\(\vartheta_{\mathrm{argmax}}=\lambda_1\).

\item
	\(\SL_n\times{(\CC^*)}^2\) on \(\CC^n\oplus\CC^n\) with \(n\ge3\),

    \nopagebreak\(\vartheta_{\mathrm{argmax}}=(n-1)\lambda_1\).

\item
	\(\SL_n\times{(\CC^*)}^2\) on \(\CC^n\oplus{(\CC^n)}^*\) with \(n\ge3\),

    \nopagebreak\(\vartheta_{\mathrm{argmax}}=\lambda_1\).

\item
	\(\SL_n\times{(\CC^*)}^2\) on \(\CC^n\oplus\Lambda(\CC^n)\) with \(n\ge3\),

    \nopagebreak\(\vartheta_{\mathrm{argmax}}=\sum_{k=1}^{n-2}\frac{1}{2}k(k+1)\lambda_{n-1-k}\).

\item
	\(\SL_n\times{(\CC^*)}^2\) on \({(\CC^n)}^*\oplus\Lambda(\CC^n)\) with \(n\ge4\) even,

    \nopagebreak\(\vartheta_{\mathrm{argmax}}=\sum_{k=1}^{n-2}(\frac{1}{2}k(k-1)+(n-1)\delta_k)\lambda_{n-1-k}\).

\item
	\(\SL_n\times{(\CC^*)}^2\) on \({(\CC^n)}^*\oplus\Lambda(\CC^n)\) with \(n\ge5\) odd,

    \nopagebreak\(\vartheta_{\mathrm{argmax}}=(n-1)\lambda_{n-2}+\sum_{k=1}^{n-3}(\frac{1}{2}k(k+1)+(n-1)\delta_k)\lambda_{n-2-k}\).

\item
	\(\SL_n\times\SL_{n'}\times{(\CC^*)}^2\) on \((\CC^n\otimes\CC^{n'})\oplus\CC^{n'}\) with \(2\le n<n'-1\),

    \nopagebreak\(\vartheta_{\mathrm{argmax}}=\sum_{k=1}^{n-1}k(n'-n+k)\lambda_k+\sum_{k=1}^{n'-1}k(n'-n+k-1)\lambda_{n-1+k}\).

\item
	\(\SL_n\times\SL_{n'}\times{(\CC^*)}^2\) on \((\CC^n\otimes\CC^{n'})\oplus\CC^{n'}\) with \(2\le n=n'-1\),

    \nopagebreak\(\vartheta_{\mathrm{argmax}}=\sum_{k=1}^{n-1}k(k+1)\lambda_k+\sum_{k=1}^{n}k^2\lambda_{n-1+k}\).

\item
	\(\SL_n\times\SL_{n'}\times{(\CC^*)}^2\) on \((\CC^n\otimes\CC^{n'})\oplus\CC^{n'}\) with \(2\le n=n'\),

    \nopagebreak\(\vartheta_{\mathrm{argmax}}=\sum_{k=1}^{n-1}k^2\lambda_k+\sum_{k=1}^{n-1}k(k+1)\lambda_{n-1+k}\).

\item
	\(\SL_n\times\SL_{n'}\times{(\CC^*)}^2\) on \((\CC^n\otimes\CC^{n'})\oplus\CC^{n'}\) with \(2\le n'<n\),

    \nopagebreak\(\vartheta_{\mathrm{argmax}}=\sum_{k=1}^{n'-1}(k(n-n'+k)-n+n')\lambda_k+\sum_{k=1}^{n'-1}k(n-n'+k+1)\lambda_{n'-1+k}\).

\item
	\(\SL_n\times\SL_{n'}\times{(\CC^*)}^2\) on \((\CC^n\otimes\CC^{n'})\oplus{(\CC^{n'})}^*\) with \(2\le n< n'-1\),

    \nopagebreak\(\vartheta_{\mathrm{argmax}}=\sum_{k=1}^{n-1}k(k+n'-n)\lambda_k+\sum_{k=1}^{n}(k(k+n'-n-3)+(2n-n'+2))\lambda_{n-1+k}\).

\item
	\(\SL_n\times\SL_{n'}\times{(\CC^*)}^2\) on \((\CC^n\otimes\CC^{n'})\oplus{(\CC^{n'})}^*\) with \(2\le n=n'-1\),

	\nopagebreak\(\vartheta_{\mathrm{argmax}}=\sum_{k=1}^{n-1}k(k+1)\lambda_k+\sum_{k=1}^n(n+{(k-1)}^2)\lambda_{n-1+k}\).

\item
	\(\SL_n\times\SL_{n'}\times{(\CC^*)}^2\) on \((\CC^n\otimes\CC^{n'})\oplus{(\CC^{n'})}^*\) with \(2\le n=n'\),

    \nopagebreak\(\vartheta_{\mathrm{argmax}}=\sum_{k=1}^{n-1}k^2\lambda_k+\sum_{k=1}^{n-1}(n+k(k-1))\lambda_{n-1+k}\).

\item
	\(\SL_n\times\SL_{n'}\times{(\CC^*)}^2\) on \((\CC^n\otimes\CC^{n'})\oplus{(\CC^{n'})}^*\) with \(2\le n'<n\),

    \nopagebreak\(\vartheta_{\mathrm{argmax}}=\sum_{k=1}^{n'-1}k(n-n'+k)\lambda_k+\sum_{k=1}^{n'-1}(k(n-n'+k-1)-n')\lambda_{n'-1+k}\).

\item
	\(\SL_2\times\SL_2\times \SL_2\times{(\CC^*)}^2\) on \((\CC^2\otimes\CC^2)\oplus(\CC^2\otimes\CC^2)\),

    \nopagebreak\(\vartheta_{\mathrm{argmax}}=\lambda_1+3\lambda_2+\lambda_3\).

\item
	\(\SL_n\times\SL_2\times \SL_2\times{(\CC^*)}^2\) on \((\CC^n\otimes\CC^2)\oplus(\CC^2\otimes\CC^2)\) with \(n\ge3\),

    \nopagebreak\(\vartheta_{\mathrm{argmax}}=(n-1)\lambda_1+(n+1)\lambda_2+\lambda_3\).

\item
	\(\SL_n\times\SL_2\times\SL_{n''}\times{(\CC^*)}^2\) on \((\CC^n\otimes\CC^2)\oplus(\CC^2\otimes\CC^{n''})\) with \(n\ge n''\ge3\),

    \nopagebreak\(\vartheta_{\mathrm{argmax}}=(n-1)\lambda_1+(n+n''-1)\lambda_2+(n''-1)\lambda_3\).

\item
	\(\Sp_{2n}\times{(\CC^*)}^2\) on \(\CC^{2n}\oplus\CC^{2n}\) with \(n\ge 2\),

    \nopagebreak\(\vartheta_{\mathrm{argmax}}=(2n-1)\lambda_1+\lambda_2\).

\item
	\(\Sp_{2n}\times\SL_2\times{(\CC^*)}^2\) on \((\CC^{2n}\otimes\CC^{2})\oplus\CC^2\) with \(n\ge 2\),

    \nopagebreak\(\vartheta_{\mathrm{argmax}}=(2n-1)\lambda_1+\lambda_2+(2n+2)\lambda_3\).

\item
	\(\Sp_{2n}\times\SL_2\times\SL_2\times{(\CC^*)}^2\) on \((\CC^{2n}\otimes\CC^2)\oplus(\CC^2\otimes\CC^2)\) with \(n\ge 2\),

    \nopagebreak\(\vartheta_{\mathrm{argmax}}=(2n-1)\lambda_1+\lambda_2+(2n+1)\lambda_3+\lambda_4\).

\item
	\(\Sp_{2n}\times\SL_2\times\SL_{n''}\times{(\CC^*)}^2\) on \((\CC^{2n}\otimes\CC^2)\oplus(\CC^2\otimes\CC^{n''})\) with \(n\ge 2,\,n''\ge3\),

    \nopagebreak\(\vartheta_{\mathrm{argmax}}=(2n-1)\lambda_1+\lambda_2+(2n+n''-1)\lambda_3+(n''-1)\lambda_4\).

\item
	\(\Sp_{2n}\times\SL_2\times\Sp_{2n''}\times{(\CC^*)}^2\) on \((\CC^{2n}\otimes\CC^2)\oplus(\CC^2\otimes\CC^{2n''})\) with \(n\ge 2,\,n''\ge3\),

    \nopagebreak\(\vartheta_{\mathrm{argmax}}=(2n-1)\lambda_1+\lambda_2+(2n+2n''-1)\lambda_3+(2n''-1)\lambda_4+\lambda_5\).

\end{enumerate}


\printbibliography{}

\end{document}